\numberwithin{equation}{subsection}
\theoremstyle{plain}
\newtheorem{thm}{Theorem}[section]
\newtheorem{lemma}[thm]{Lemma}
\newtheorem{prop}[thm]{Proposition}
\newtheorem{cor}[thm]{Corollary}
\theoremstyle{definition}
\newtheorem{conj}[thm]{Conjecture}
\newtheorem{exmp}[thm]{Example}
\theoremstyle{remark}
\newtheorem{rmk}[thm]{Remark}
\newcommand{\Hb}{\mathbb{H}}
\newcommand{\SL}{{\mathrm{SL}}}
\newcommand{\Zb}{\mathbb{Z}}
\newcommand{\Cb}{\mathbb{C}}
\newcommand{\Qb}{\mathbb{Q}}
\newcommand{\Nb}{\mathbb{N}}
\newcommand{\lp}{\left (}
\newcommand{\rp}{\right )}
\newcommand{\Gal}{{\mathrm{Gal}}}
\newcommand{\Rb}{\mathbb{R}}
\newcommand{\smat}[4]{\left(\begin{smallmatrix}
                 #1 & #2\\
                 #3 & #4
\end{smallmatrix}\right)}
\newcommand{\pmat}[4]{\begin{pmatrix}
                 #1 & #2\\
                 #3 & #4
\end{pmatrix}}
\newcommand{\half}{{\tfrac{1}{2}}}
\newcommand{\Ac}{{\mathcal{A}}}
\newcommand{\ebf}{{\mathbf{e}}}
\newcommand{\Hc}{{\mathcal{H}}}
\newcommand{\Hs}{{\mathscr{H}}}
\newcommand{\Dc}{{\mathcal{D}}}
\newcommand{\SO}{{\mathrm{SO}}}
\newcommand{\hT}{{\hat{\delta}}}
\newcommand{\tR}{\tilde{R}}
\newcommand{\tG}{\tilde{G}}
\newcommand{\Fc}{{\mathcal{F}}}
\newcommand{\Oc}{\mathcal{O}}
\newcommand{\df}{\mathfrak{d}}
\newcommand{\Cl}{{\mathrm{Cl}}}
\newcommand{\af}{\mathfrak{a}}
\newcommand{\Nm}{{\mathrm{Nm}}}
\newcommand{\Cc}{{\mathcal{C}}}
\newcommand{\Ec}{{\mathcal{E}}}
\renewcommand{\Mc}{{\mathcal{M}}}
\newcommand{\Sc}{{\mathcal{S}}}
\newcommand{\Pc}{{\mathcal{P}}}
\newcommand{\Lc}{{\mathcal{L}}}
\newcommand{\Nc}{{\mathcal{N}}}
\newcommand{\Uc}{{\mathcal{U}}}
\newcommand{\tr}{\mathrm{tr}}
\newcommand{\GSpin}{\mathrm{GSpin}}
\newcommand{\Res}{\mathrm{Res}}
\newcommand{\Wb}{\mathbb{W}}
\newcommand{\Ab}{\mathbb{A}}
\newcommand{\Db}{\mathbb{D}}
\newcommand{\D}{\mathrm{D}}
\newcommand{\E}{\mathrm{E}}
\newcommand{\Pb}{\mathbb{P}}
\newcommand{\Xb}{\mathbb{X}}
\newcommand{\Vb}{\mathbb{V}}
\newcommand{\rf}{\mathfrak{r}}
\newcommand{\tX}{{\tilde X}}
\newcommand{\tP}{{\tilde P}}
\newcommand{\trf}{{\tilde{\mathfrak{r}}}}
\newcommand{\tLc}{\tilde{ \mathcal{L}}}
\newcommand{\tU}{\tilde{U}}
\renewcommand{\th}{\tilde{h}}
\newcommand{\vtau}{{\tau}}
\newcommand{\tka}{{\tilde\kappa}}
\newcommand{\hka}{{\hat\kappa}}
 \newcommand{\vv}[1]{\overrightarrow{#1}}
   \def\MR#1{}
\begin{document}
\title{ Algebraicity of Higher Green Functions at a CM point}
\author[Yingkun Li]{Yingkun Li}
\address{Fachbereich Mathematik,
Technische Universit\"at Darmstadt, Schlossgartenstrasse 7, D--64289
Darmstadt, Germany}
\email{li@mathematik.tu-darmstadt.de}
\thanks{The author is supported by  the LOEWE research unit USAG,
    and by the Deutsche Forschungsgemeinschaft (DFG) through the Collaborative Research Centre TRR 326 ``Geometry and Arithmetic of Uniformized Structures'', project number 444845124.
}

\date{\today}
\maketitle
\begin{center}
{\it To Don Zagier on the occasion of his 70th birthday  }
\end{center}

\begin{abstract}

  In this paper, we investigate the algebraic nature of the value of a higher Green function on an orthogonal Shimura variety at a single CM point.
  This is     motivated by  a conjecture of Gross and Zagier in the setting of higher Green functions on the product of two modular curves.
  In the process, we will study analogue of harmonic Maass forms in the setting of Hilbert modular forms, and obtain results concerning the arithmetic of their holomorphic part Fourier coefficients. 
  As a consequence,  we answer a question of Zagier in his 1986 ICM proceeding.
\end{abstract}

\tableofcontents
\section{Introduction}

Let  $j(z)$ be the modular $j$-invariant on the modular curve $X_0(1) := \SL_2(\Zb) \backslash \Hb$ with
$\Hb$ the  upper-half plane. 
Its values at CM points
are algebraic integers called singular moduli. They play an important role in the explicit construction of class fields of imaginary quadratic fields. 

\subsection{Conjecture and Results}
The function $G_1(z_1, z_2) := 2 \log |j(z_1) - j(z_2)|$ is the automorphic Green function on $X_0(1) \times X_0(1)$, and the limiting member of a family of automorphic functions
\begin{equation}
  \label{eq:Gs}
  \begin{split}
    G_s(z_1, z_2) &:= -2 \sum_{\gamma \in \Gamma} Q_{s-1} \lp 1 + \frac{|z_1 - \gamma z_2|^2}{2 \Im(z_1) \Im(\gamma z_2)}\rp,~ \Re(s) > 1, \\
    Q_{s-1}(t) &:= \int^\infty_0 ( t + \sqrt{t^2 - 1} \cosh (u))^{-s} du,
  \end{split}
\end{equation}
that are eigenfunctions with respect to the Laplacians in $z_1$ and $z_2$.
For integral parameters $s = r + 1 \in \Nb$, these functions are called higher Green functions, and played an important role in calculating arithmetic intersections of Heegner cycles on Kuga-Sato varieties \cite{Zhang97}.
Given a weakly holomorphic modular form $f = \sum_{m \gg -\infty} c(m) q^m \in M^!_{-2r}$ on $X_0(1)$, one can 
associate to it a higher Green function
\begin{equation}
  \label{eq:Grf}
  \begin{split}
    G_{r+1, f}(z_1, z_2) &:= \sum_{m \in \Nb} c(-m) m^{r} G^m_{r+1}(z_1, z_2),\\
    G^m_{s}(z_1, z_2) &= G_s(z_1, z_2) \mid T_m = -2 \sum_{\gamma \in M_2(\Zb),~ \det(\gamma) = m} Q_{s-1} \lp 1 + \frac{|z_1 - \gamma z_2|^2}{2 \Im(z_1) \Im(\gamma z_2)}\rp.
  \end{split}
\end{equation}
Although the theory of complex multiplication does not directly apply, the values of $G_{r+1, f}$ at CM points on $X_0(1)^2$ should be algebraic in nature, as in the case of the automorphic Green function.
More precisely, these values should be algebraic multiples of logarithm of algebraic numbers.
This was conjectured in \cite[Conjecture (4.4)]{GZ86} when $z_1, z_2$ have the same discriminant, and mentioned as a question in \cite[section V.1]{GKZ87} for the general case (see also \cite{Mellit08} and \cite{Via11}).
In this paper, we prove the following result, which in particular solves problem (ii) raised by Don Zagier at the end of his 1986 ICM proceeding \cite{zagier-ICM}.

\begin{thm}
  \label{thm:GZ}
  Let $r \in \Nb$ and $f \in M^!_{-2r}$ with integral Fourier coefficients.
  Suppose $d_1, d_2$ are negative discriminants, such that one of them is fundamental when $r$ is odd.
  For any CM point $z_i$ with discriminant $d_i$, there exist $\kappa \in \Nb$ depending on $d_1, d_2, r$ and $f$, and $\alpha = \alpha(z_1, z_2) \in H$ such that
  \begin{equation}
    \label{eq:GZ}
    (d_1 d_2)^{r/2} G_{r+1, f}(z_1, z_2)  = \frac{1}{\kappa} \log |\alpha|.
  \end{equation}
  where $H = H_1H_2$ with $H_i$ the ring class field extension of $E_i := \Qb(\sqrt{d_i})$ associated to $z_i$.
  Furthermore, we have
  \begin{equation}
    \label{eq:Gal}
    \alpha(z_1^\sigma, z_2^\sigma) = \sigma(\alpha(z_1, z_2))
  \end{equation}
  for any $\sigma \in \Gal(H/E)$, where $E = E_1E_2$. 
\end{thm}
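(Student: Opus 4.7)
The plan is to realize the higher Green function $G_{r+1,f}$ as a regularized theta lift on an orthogonal Shimura variety, and then use a see-saw identity to convert its value at the CM point $(z_1,z_2)$ into an arithmetic quantity controlled by Fourier coefficients of a Hilbert harmonic Maass form.

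First, I would view $X_0(1) \times X_0(1)$ as a connected component of the orthogonal Shimura variety attached to the lattice $M_2(\Zb)$ of signature $(2,2)$ equipped with its determinant quadratic form, so that $\mathrm{SO}(M_2)$ is essentially $\SL_2 \times \SL_2$. Under this identification, $G_{r+1,f}$ arises as a regularized Borcherds--type theta lift of a vector-valued weakly holomorphic input built from $f \in M^!_{-2r}$ through the Hecke operators $T_m$ appearing in \eqref{eq:Grf}. This places $G_{r+1,f}$ in a setting where CM values can be attacked via see-saw dual pairs in the style of Kudla--Millson and Bruinier--Yang.

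Next, the pair $(z_1,z_2)$ cuts out a $0$-dimensional CM cycle in $X_0(1)^2$, which corresponds to a rational splitting of $V = M_2(\Qb)$ into a positive definite plane $V_+$ (spanned by the two CM points) and a negative definite plane $V_-$. Because both factors are defined over imaginary quadratic fields, the resulting ``small'' quadratic space $V_-$ is attached to the real quadratic field $F = \Qb(\sqrt{d_1 d_2})$, and the see-saw for the splitting $V = V_+ \oplus V_-$ pairs the orthogonal group of $V_-$ with the Hilbert modular group $\SL_2(\Oc_F)$. The see-saw identity then expresses $(d_1 d_2)^{r/2} G_{r+1,f}(z_1,z_2)$ as a regularized pairing of $f$ against the theta series of $V_-$, which, after the Shimura/base-change step, rewrites as a finite linear combination of holomorphic Fourier coefficients of a specific Hilbert harmonic Maass form $\widetilde{F}$ of parallel weight $(1-r, 1-r)$ over $F$, indexed by totally positive elements governed by the CM data.

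The form $\widetilde{F}$ is chosen so that its image under the $\xi$-operator is the Hecke eigenform naturally associated to the base change of $f$ to $F$. Using the arithmetic results developed earlier in the paper for the holomorphic part of such Hilbert harmonic Maass forms (the Hilbert analogue of Bruinier--Funke--Ono), each relevant coefficient is a $\Qb$-linear combination of logarithms $\log|\beta|$ of explicit algebraic numbers $\beta$ in the compositum $H = H_1 H_2$. The hypothesis that one of $d_1, d_2$ is fundamental enters precisely here: it ensures that the relevant Hecke module has the required rationality and that certain shadow contributions from non-cuspidal pieces vanish, so that no larger extension is needed. Clearing denominators produces the integer $\kappa$ and the element $\alpha \in H$ of \eqref{eq:GZ}. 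The Galois equivariance \eqref{eq:Gal} is then automatic from the construction: under Shimura reciprocity, the action of $\sigma \in \Gal(H/E)$ on the CM cycle $(z_1, z_2)$ permutes the split primes labeling the Fourier coefficients of $\widetilde{F}$ in exactly the manner that realizes the usual $\Gal(H/E)$-action on $\alpha \in H$.

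The main obstacle I expect is the arithmetic control of $\widetilde{F}$: one must produce a Hilbert harmonic Maass form with prescribed principal part whose holomorphic Fourier coefficients are provably logarithms of algebraic numbers and, more delicately, show that these numbers lie in the specific ring class field $H$ rather than in some a priori larger extension. Pinning down both the algebraicity and the exact field of definition, and tracking the denominator $\kappa$, is where the Hilbert harmonic Maass theory developed earlier in the paper will do the bulk of the work.
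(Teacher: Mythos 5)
There is a genuine gap in your argument. You claim that the arithmetic machinery of the paper lets one rewrite $(d_1 d_2)^{r/2} G_{r+1,f}(z_1,z_2)$ directly as a rational combination of holomorphic Fourier coefficients of a Hilbert harmonic Maass form, and that these coefficients are provably $\log|\beta|$ with $\beta \in H$. But the arithmetic result the paper actually establishes (Theorem \ref{thm:algebraicity}, built on Lemma \ref{lemma:Cf} and the discussion around \eqref{eq:PhiR}) only controls \emph{differences} $\hat\delta^+(\tau,1)-\hat\delta^+(\tau,h_W)$, equivalently differences $\Phi_L(z_0,1,f)-\Phi_L(z_0,h,f)$ of CM values. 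The reason is exactly the obstacle the paper flags: the Borcherds-type lift over a totally real field carries an uncontrolled locally constant ``normalizing constant'' $C_f$ (see \eqref{eq:PhiR}), and only differences cancel it. Moreover the Koecher principle forces the would-be $\widetilde{F}$ to have singularities in the interior of $\Hb^d$, which is why the paper builds $\hat\delta$ with the ``mixed mock-modular'' trick (multiplying by a power of $\theta_P$) and why even then it can only reach differences. So the claim that ``each relevant coefficient is a $\Qb$-linear combination of logarithms'' is precisely what is not available; the step from ``differences of coefficients'' to ``individual coefficients'' is the crux and you have not supplied it.

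What the paper actually does is a two-ingredient linear algebra trick. First, since $d_1$ is fundamental, the quadratic characters of $d_1$ are genus characters, and Theorem 7.13 / Corollary 7.18 of \cite{BEY21} give that the \emph{average} of $G_{r+1,f}(z_1^{\sigma_1},z_2)$ over a coset $\sigma_0 G_1^2$ is $\frac{1}{t}\log|a(\sigma_0,z_2)|$ with $a\in H_2(\sqrt{d_1})$ and explicit Galois equivariance. Second, the paper checks that every $\sigma \in G_1^2 \times \{\mathrm{id}\}\subset\Gal(H/E)$ satisfies the $\Cl^+_1(F)/(\Cl^+_1(F))^2$ condition of Theorem \ref{thm:diff} (using that $\Gal(H_0/\Qb)$ is an elementary abelian $2$-group and that the relevant restriction to $F^{\mathrm{ab}}$ is a square), so the \emph{difference} $(d_1d_2)^{r/2}\bigl(G_{r+1,f}(z_1,z_2)-G_{r+1,f}((z_1,z_2)^\sigma)\bigr)=\frac{1}{tc}\log|\alpha/\sigma(\alpha)|$. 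The $|G_1^2|+1$ equations (one average, the rest differences) are then solved for the single value $G_{r+1,f}(z_1,z_2)$. Your proposal contains neither the averaging input from \cite{BEY21} nor the verification that the needed Galois elements lie in the admissible class; without one of these, the normalizing-constant ambiguity in the regularized lift is fatal. You have also misplaced the role of the fundamentality hypothesis: it enters through the genus-character structure needed for the averaged result, not through rationality of a Hecke module or vanishing of shadow contributions.

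Your overall framework (orthogonal Shimura variety of signature $(2,2)$, rational splitting at a CM point, see-saw down to $F=\Qb(\sqrt{d_1d_2})$, Hilbert modular forms) correctly matches the paper's setup for Theorem \ref{thm:diff}, but as a proof of Theorem \ref{thm:GZ} it is incomplete at the decisive step.
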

\begin{rmk}
  The group $\Gal(H/E)$ can be embedded as a subgroup of $ \Gal(H_1/E_1) \times\Gal(H_2/E_2)$, which then acts on the CM point $(z_1, z_2)$. 
\end{rmk}

There has been a lot of previous works concerning this question.
The first such result is due to Gross, Kohnen and Zagier \cite{GKZ87}, where $r$ is even\footnote{For odd $r$, they also obtain certain result, which turns out to be trivial in the case of level 1.}  
  and one considers average of the whole $\Gal(H/E)$-orbit of $(z_1, z_2)$.
In that case, the value is a rational multiple of the logarithm of a rational number.
When $E_1= E_2$
, this conjecture follows from the work of Zhang \cite{Zhang97}, under the assumption of the non-degeneracy of certain height pairing of Heegner cycles on Kuga-Sato varieties.
In \cite{Via11}, Viazovska gave an analytic proof without this assumption.
When $E_1 \neq E_2$
, Mellit \cite{Mellit08} gave a strategy to systematically verify this conjecture with one of the points fixed, and carried it out for $z_1 = i$. 
In \cite{Li18}, we considered the average over the whole $\Gal(H/E)$-orbit with $r$ odd, and were able to show that $\alpha \in \Qb(\sqrt{d_1d_2})$ and give an explicit factorization of the ideal it generates in the spirit of the seminal work of Gross and Zagier on singular moduli\cite{GZ85}.
Very recently, Bruinier, Ehlen and Yang made significant progress and proved algebraicity result in the sense of Theorem \ref{thm:GZ} by averaging over the Galois orbit of one of the two CM points with fundamental discriminant  \cite{BEY21}. 
We have now removed this averaging in Theorem \ref{thm:GZ} to obtain an algebraicity result at an individual CM point.

It is important to mention that one can replace $\SL_2(\Zb)$ with a congruence subgroup $\Gamma_0(N)$, define higher Green functions on $X_0(N)^2$ analogously, and ask the same question. This was in fact the setting that \cite{GKZ87} and \cite{Zhang97} were in. 
By viewing $X_0(N)^2$ as the Shimura variety for the $\Qb$-split group $\mathrm{O}(2, 2)$, it is natural to generalize the setting to an arbitrary  orthogonal Shimura variety, and ask the question about the algebraic nature of the CM-values of higher Green functions on such varieties.
This framework, which was adopted in \cite{BEY21}, will be the one we work in.

To be more precise, let $ V$ be a rational quadratic space of 
signature $(n, 2)$, and
\begin{equation}
  \label{eq:XKC}
  X_K(\Cb) = X_{V, K}(\Cb) = H(\Qb) \backslash \Db_V \times H(\hat \Zb)/K
\end{equation}
 the  Shimura variety associated to the algebraic group $H = H_V = \GSpin_V$ and an open compact subgroup $K \subset H(\hat\Zb)$ (see section \ref{subsec:QS} for details).
 Given an even, integral lattice $L \subset V$ such that $K$ fixes $\hat L = L \otimes \hat \Zb$  and acts trivially on the finite abelian group $\hat L'/\hat L$, one can associate a higher Green function    $\Phi_L^r(z, h, f)$ on $X_K$ to each weakly holomorphic modular form $f \in M^!_{1-n/2 -2r, \bar\rho_L}$ and $r \in \Nb$ (see Equation \eqref{eq:Phijf}).
 It has logarithmic singularity along special divisors on $X_K$. 

For   a totally real field $F$ of degree $d$, a quadratic CM extension $E/F$ becomes a binary $F$-quadratic space $W$ with respect to a quadratic form $\alpha \Nm_{E/F}$ for some $\alpha \in F^\times$. 
Suppose $W$ has signature $(( 0,2), ( 2,0), \dots, ( 2,0))$ with respect to the real embeddings $\sigma_1, \dots, \sigma_d$ of $F$ and there is an isometric embedding $W_\Qb := \Res_{F/\Qb}W \hookrightarrow V$.
This not only implies
\begin{equation}
  \label{eq:nd}
  n + 2 \ge 2d,
\end{equation}
but also gives CM points
$Z(W_\Qb) \subset X_K$
(see Equation \eqref{eq:ZW}).
This 0-cycle is defined over $F$, and each individual point  $(z_0, h) \in Z(W_\Qb)$ is defined over certain abelian extension of $E$. 
We will prove the following result concerning the algebraic nature of $\Phi^r_L$ at CM points in $Z(W_\Qb)$.
 \begin{thm}
   \label{thm:diff}
   In the setting above, suppose $f$ has integral Fourier coefficients.
   Then there exist
   algebraic numbers $\lambda_j \in F$   and $\alpha_j \in E^{\mathrm{ab}}$ for $1 \le j \le d$ such that 
  \begin{equation}
    \label{eq:maindiff}
    \Phi^r_L(z_0, h_1, f) -     \Phi^r_L(z_0, h_2, f) =
    \sum_{j = 1}^d \lambda_j \log \left| \frac{\sigma_{h_1} (\alpha_j)}{\sigma_{h_2}(\alpha_j)} \right|
  \end{equation}
  for any $(z_0, h_i) \in Z(W_\Qb)$.
Here $\sigma_h \in \Gal(E^{\mathrm{ab}}/E)$ is the element associated to $h \in   E^\times \backslash \hat E^\times$ via class field theory.
In particular when $F = \Qb(\sqrt{D})$ is real quadratic and $n +2 = 2d =4$, we can take $\lambda_1 = D^{r/2}/\kappa$ for some $\kappa \in \Nb$ and $\lambda_2 = 0$.
 \end{thm}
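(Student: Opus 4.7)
The plan is to realize $\Phi^r_L(z_0, h, f)$ as a regularized theta lift of $f$ and then to exploit the CM-splitting of the theta kernel induced by the embedding $\Res_{F/\Qb}W \hookrightarrow V$. Decompose $V = \Res_{F/\Qb}W \oplus U$ orthogonally, where $U$ is a positive-definite $\Qb$-quadratic space of dimension $n+2-2d$. The Siegel theta kernel of $V$ splits as a product of a Hilbert theta kernel on $W$ (viewed as an $F$-quadratic space) and a classical positive-definite theta series on $U$. At the CM point $(z_0, h)$, this converts the regularized integral defining the higher Green function into a regularized Petersson-type pairing of $f$ against a Hilbert modular form of parallel weight depending on the idele class of $h$.

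The second step is to recognize this pairing as extracting a Fourier coefficient of the holomorphic part of a Hilbert harmonic Maass form $\widetilde{\mathcal{G}}_h$ on $\SL_2(F)$, whose principal part is prescribed by $f$ together with the theta lift from $U$, and whose shadow accounts for the non-holomorphic contribution. By the paper's main arithmetic result on Hilbert harmonic Maass forms, each such individual Fourier coefficient is of the form $\sum_{j=1}^{d} \lambda_j \log|\alpha_j|$ with $\lambda_j \in F$ and $\alpha_j \in H$. The dependence on $h$ is through the narrow ideal class of $\Nm(h)$, and class field theory matches a change of $h$ within a fixed class to the Galois action $\sigma_h$ on the $\alpha_j$.

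Taking the difference $\Phi^r_L(z_0, h_1, f) - \Phi^r_L(z_0, h_2, f)$ kills the transcendental shadow contribution precisely under the hypothesis that the image of $\Nm(h_1 h_2^{-1})$ in $\Cl^+_1(F)/(\Cl^+_1(F))^2$ is trivial, because that contribution depends only on this quotient class. What survives is the difference of two holomorphic Fourier coefficients of $\widetilde{\mathcal{G}}_h$, yielding exactly the claimed $\sum_{j=1}^d \lambda_j \log|\sigma_{h_1}(\alpha_j)/\sigma_{h_2}(\alpha_j)|$. In the special case $F = \Qb(\sqrt D)$ with $n = 4$, the asymmetry between the infinite place $\sigma_1$ (where $W$ has signature $(0,2)$) and $\sigma_2$ (signature $(2,0)$) forces $\lambda_2 = 0$, and the normalization $\lambda_1 = D^{r/2}/\kappa$ comes from the action of the $r$-fold archimedean Laplacian at $\sigma_1$ on the Hilbert theta kernel.

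The principal obstacle is the algebraicity of the \emph{individual} holomorphic Fourier coefficients of $\widetilde{\mathcal{G}}_h$. The averaging method of \cite{BEY21} reduces the statement for full Galois orbits to known algebraicity of Fourier coefficients of an integral-weight harmonic Maass form. Removing the average requires constructing $\widetilde{\mathcal{G}}_h$ directly in the Hilbert modular setting, matching its Fourier coefficients to CM-values of meromorphic Hilbert modular forms whose divisors are supported on CM cycles, and controlling denominators at every finite place of $F$ simultaneously --- this is the technical heart of the paper.
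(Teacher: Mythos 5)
Your high-level outline (split $V = \Res_{F/\Qb}W \oplus U$, split the theta kernel, apply Stokes to reduce the regularized integral to a Fourier-coefficient pairing, invoke an arithmetic result about those coefficients) does follow the same skeleton as the paper. But the places where you defer to ``the technical heart'' are precisely where the paper's new ideas live, and the route you sketch for filling that gap is the one the paper explicitly argues cannot work.

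Concretely, you propose to construct a ``Hilbert harmonic Maass form $\widetilde{\mathcal{G}}_h$'' with a prescribed principal part and shadow, and to read off the desired quantity from its holomorphic Fourier coefficients. The paper points out (Section~\ref{subsec:obs}) that Koecher's principle forbids a Hilbert modular form with singularities only at the cusps; any would-be $\hat\theta_N$-analogue must acquire interior singularities, which destroys the Stokes computation and has no analogue of the elliptic Bruinier--Funke theory. The paper's workaround is the whole content of Theorem~\ref{thm:L1pre}: multiply $\theta_N(\tau) - \theta_N(\tau,h)$ by $\theta_P(\tau)^\ell$ for a positive-definite $P \in \Uc_F^+$ and $\ell \gg 0$, so that the resulting real-analytic Hilbert modular form $\hat\delta$ has \emph{no} singularity on $\Hb^d$ at all (a Serre-vanishing/$\bar\partial$-argument replaces the elliptic existence theorem). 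One must then undo the multiplication by $\theta_P^\ell$, which is done along the diagonal via the ``partition of unity'' in Prop.~\ref{prop:POU}, reducing division by the Hilbert form $\theta_P$ to division by its elliptic diagonal restriction $\theta_P^\Delta$, using the Rankin--Swinnerton-Dyer-style nonvanishing result in Lemma~\ref{lemma:supp}. None of this appears in your sketch, and without it the existence of $\widetilde{\mathcal{G}}_h$ is simply not available.

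A second, independent error: you attribute the hypothesis on $\Nm(h_1h_2^{-1}) \in \Cl^+_1(F)/(\Cl^+_1(F))^2$ to cancellation of the ``transcendental shadow contribution.'' The shadow $\xi(f)$ vanishes because the Whittaker form $f$ is weakly holomorphic; no class condition is involved there. The class condition is used in Theorem~\ref{thm:algebraicity} to remove the locally constant ``normalizing constant'' $C_f$ attached to Bruinier's generalized Borcherds lift (Lemma~\ref{lemma:Cf}): one needs $\nu(h_W)$ to be a square in $\Cl^+_1(F)$ so that $h_W$ can be written as $\beta h k$ with $\beta \in Z(\hat\Qb)$ (the center, on which the spinor norm is the squaring map), $h \in H(\Qb)$, $k \in K$, forcing $|C_f(1)| = |C_f(h_W)|$. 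Your proposal does not engage with this normalizing-constant issue at all, so the claimed difference identity would carry an uncontrolled extra term.

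Finally, the specialization to $F$ real quadratic, $n=4$: you say ``the asymmetry between $\sigma_1$ (signature $(0,2)$) and $\sigma_2$ (signature $(2,0)$) forces $\lambda_2 = 0$.'' In the paper this follows from the explicit Rankin--Cohen computation in Example~\ref{exmp:wt11}: the operator $\Cc_{(1,1),r}$ applied to a weight-$(1,1)$ exponential produces a Legendre polynomial $P_r$ of a ratio lying in $\sqrt{D}\Qb$, and the resulting coefficient is rational in $D^{r/2}$ times a logarithm. That is a genuine computation, not a consequence of the signature asymmetry alone, and your heuristic would not by itself yield the precise normalization $\lambda_1 = D^{r/2}/\kappa$.
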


 \begin{rmk}
   \label{rmk:simple}
   When $(V, Q) = (M_2(\Qb), N \cdot \det)$, the Shimura variety $X_K(\Cb)$ becomes $X_0(N)^2$ for suitable $K$ \cite[section 3.1]{YY19}.
   In that case, for CM points $z_i$ with discriminant $d_i$, the CM point $(z_1, z_2)$ is in $Z(W_\Qb)$ with $W$ certain $F = \Qb(\sqrt{d_1 d_2})$-quadratic space. These are called ``big CM points'', resp.\ ``small CM points'', when $F$ is real quadratic, resp.\ $F = \Qb$.
 \end{rmk}

 \begin{rmk}
   \label{rmk:singularity}
Theorem \ref{thm:diff} applies    even when $Z(W_\Qb)$ intersects the singularity of $\Phi^r_L$, 
In that case, the function $\Phi^r(z, 1, f) - \Phi^r(z, h, f)$ in $z$ can be continued to a real-analytic function in the neighborhood of the singularity, and its value at $z = z_0$ defines the quantity on the left hand side of \eqref{eq:maindiff}. 
 \end{rmk}

 Previous results concerning any linear combinations of CM values of higher Green function either assume $n \le 2$ or $d = 1$. 
 Theorem \ref{thm:diff} is the first result where the cases with $n \ge 3$ and $d \ge 2$ are addressed.
In such cases, there is no known results even when one averages the higher Green function over all the CM points in all Galois conjugates of $Z(W_\Qb)$, unlike for Green functions studied in \cite{BY06, BKY12}.
 Together with  Theorem \ref{thm:GZ}, Theorem \ref{thm:diff}  naturally leads one to expect the following. 
 \begin{conj}
  \label{conj:main}
  In the setting of Theorem \ref{thm:diff}, suppose $f$ has integral Fourier coefficients and the singularity of $\Phi^r_L(z, h, f)$ does not intersect $Z(W_\Qb)$. Then there exists  $\lambda_j \in F$ and $\alpha_j \in E^{\mathrm{ab}}$ for $1 \le j \le d$ such that 
  \begin{equation}
    \label{eq:conj}
    \Phi^r_L(z_0, h, f) = \sum_{j = 1}^d \lambda_j \log \left| \sigma_h(\alpha_j) \right|
  \end{equation}
  for all $(z_0, h) \in Z(W_\Qb)$.
 \end{conj}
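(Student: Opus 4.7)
My plan is to bootstrap \eqref{eq:conj} out of Theorem \ref{thm:diff} by pairing it with a suitable averaged algebraicity identity. The Galois orbit of $(z_0, 1)$ inside $Z(W_\Qb)$ decomposes into fibers indexed by the quotient $\Cl^+_1(F)/(\Cl^+_1(F))^2$, and within each fiber Theorem \ref{thm:diff} already writes every pairwise difference $\Phi^r_L(z_0, h_1, f) - \Phi^r_L(z_0, h_2, f)$ in the target form $\sum_j \lambda_j \log|\sigma_h(\alpha_j)|$ with $\lambda_j \in F$, $\alpha_j \in H$, and the correct Galois equivariance. Hence the conjecture for an entire fiber reduces to verifying it at a single representative.

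To produce such a representative value, I would invoke an averaged identity in the spirit of \cite{BEY21}, in which $\sum_{h} \Phi^r_L(z_0, h, f)$ summed over an entire Galois orbit (or the whole CM cycle) is expressed as $\sum_{j=1}^d \mu_j \log|\beta_j|$ with $\mu_j \in F$ and $\beta_j \in H$. Such identities arise from see-saw and Borcherds-product manipulations of the regularized theta lift, coupled with the arithmetic input on holomorphic Fourier coefficients of Hilbert modular harmonic Maass forms developed in the earlier sections of the paper. Substituting the difference formula from Theorem \ref{thm:diff} into the averaged identity isolates the single unknown value per fiber, and one can then solve for each individual CM value in closed form.

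The main obstacle is to bridge the distinct fibers of $\Cl^+_1(F)/(\Cl^+_1(F))^2$: Theorem \ref{thm:diff} provides no information relating CM points in distinct square classes, so a single averaged identity summed across the full orbit introduces one unknown per fiber, and we cannot solve without more relations. The envisioned remedy is to construct auxiliary averaged relations that mix square classes, for instance by varying the input weakly holomorphic form so that its lift probes different cosets, or by applying Hecke operators that permute the fibers while preserving the log-algebraic structure. When $F$ has odd narrow class number (Remark \ref{rmk:odd}), the quotient is trivial and this obstruction disappears, so this sub-case should already follow from the scheme above without further input; consequently, the real content of the conjecture lies in handling the even-class-number case, and any attack must produce a genuinely new mechanism for moving between square classes.
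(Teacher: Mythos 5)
The statement you were asked to prove is stated in the paper as \emph{Conjecture} \ref{conj:main}, and the paper does not prove it: the only case it establishes is Theorem \ref{thm:conj}, under the extra hypotheses that $F$ is real quadratic with odd class number, $r$ is even, $n = 4$, and $Z(W_\Qb)$ is defined over $\Qb$, and the paper's Outlook subsection explicitly defers the general case (even for $d = 2$) to future work. So a complete blind proof was not achievable, and you are right to present a strategy rather than a closed argument.

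Your strategy is exactly the one the paper uses for Theorem \ref{thm:conj}: combine the pairwise difference identity of Theorem \ref{thm:diff} with an averaged (``sum'') identity that anchors one unknown per square-class fiber. But your diagnosis of the bottleneck is off. You write that when $F$ has odd narrow class number the obstruction disappears and ``this sub-case should already follow from the scheme above without further input.'' That is not so. Odd narrow class number does clear the square-class constraint in Theorem \ref{thm:diff}, but you are then left only with \emph{differences} of CM values, and you still need one absolute value --- the averaged sum --- to pin them down. That sum identity is not a soft ingredient: as the paper notes at the end of section \ref{subsec:obs}, the Rankin--Cohen trick of \cite{BEY21} supplies such a sum identity only when $n = 2d = 4$ and $r$ is even, and their Theorem 5.10 additionally requires $Z(W_\Qb)$ to be defined over $\Qb$. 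This is precisely why Theorem \ref{thm:conj} carries all of those hypotheses alongside the odd class number. The genuinely missing mechanism for the general conjecture is therefore the construction of a sum result (for example, via the real-analytic Hilbert modular preimage of a coherent Eisenstein series sketched in the Outlook), not only a device for moving between square classes --- though that second obstruction is indeed also open when the narrow class number is even.
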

 When $F$ is real quadratic, i.e.\ $d = 2$, we can confirm it in the following case.
 \begin{thm}
   \label{thm:conj}
   Conjecture \ref{conj:main} holds when $F$ is a real quadratic field,  $r$ is even, $n = 4$, and $Z(W_\Qb)$ is defined over $\Qb$, in which case we can take $\lambda_1 \in \Qb$ and $\lambda_2 = 0$. 
 \end{thm}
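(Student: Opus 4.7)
The plan is to combine Theorem \ref{thm:diff} with an averaged CM identity that is $\Qb$-rational. First, since $F$ is real quadratic with odd class number, Remark \ref{rmk:odd} makes the narrow class group condition in Theorem \ref{thm:diff} vacuous. Combined with $r$ even and $n=4$, the theorem yields
\begin{equation*}
\Phi^r_L(z_0, h, f) - \Phi^r_L(z_0, 1, f) = \frac{D^{r/2}}{\kappa} \log\left|\frac{\sigma_h(\beta)}{\beta}\right|
\end{equation*}
for some $\beta \in H$ and $\kappa \in \Nb$, valid for every $h$. This reduces the problem to showing $\Phi^r_L(z_0, 1, f) = \log|\alpha|$ for some $\alpha \in H$; the full statement, including Galois equivariance, then follows by taking $\alpha = \alpha_0 \cdot \beta^{D^{r/2}/\kappa}$ with $\alpha_0 \in \Qb^\times$, after clearing denominators by replacing $\beta$ and $\kappa$ with a suitable compatible integral power still in $H$, which is the mechanism by which the coefficient is brought to $\lambda_1 = 1$.

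Second, since $Z(W_\Qb)$ is defined over $\Qb$, the CM points $\{(z_0, h)\}$ form a single $\Gal(\bar\Qb/\Qb)$-orbit of size $N$. Summing the difference identity over the orbit and using $\prod_h \sigma_h(\beta) = \Nm_{H/\Qb}(\beta) \in \Qb^\times$ collapses the right-hand side to
\begin{equation*}
\sum_h \Phi^r_L(z_0, h, f) = N \cdot \Phi^r_L(z_0, 1, f) + \frac{D^{r/2}}{\kappa} \log\left|\frac{\Nm_{H/\Qb}(\beta)}{\beta^N}\right|.
\end{equation*}
Granting that the left-hand side equals $\log|q|$ for some $q \in \Qb^\times$, one solves for $\Phi^r_L(z_0, 1, f)$, combines the two rational log contributions, and concludes that $\Phi^r_L(z_0, 1, f)$ itself is a logarithm of the absolute value of an element of $H$, giving the full formula together with Step 1.

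The main obstacle is establishing this $\Qb$-rational averaged identity. The theorem of Bruinier-Ehlen-Yang \cite{BEY21} provides it only after averaging over the Galois orbit of a single CM point with fundamental discriminant, whereas we need the sum over the full $\Qb$-rational cycle $Z(W_\Qb)$, which requires an additional averaging over the complementary direction in $\Gal(H/E)$. I would approach this by combining \cite{BEY21} with a regularized theta integral computation for the full big-CM cycle, exploiting that for even $r$ the sign factors in the relevant Kudla-Millson/Borcherds lift align so the total sum lands in $\log|\Qb^\times|$, and that the hypothesis that $Z(W_\Qb)$ is $\Qb$-rational guarantees the aggregate is $\Gal(\bar\Qb/\Qb)$-invariant and hence a rational logarithm rather than a transcendental quantity.
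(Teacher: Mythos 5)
Your overall skeleton matches the paper's: combine the difference identity from Theorem \ref{thm:diff} (valid for all $h$ because odd class number makes every class in $\Cl^+_1(F)=\Cl(F)$ a square) with a ``sum result'' over the cycle $Z(W_\Qb)$, then solve the resulting system of equations. The linear algebra at the end is essentially the paper's, modulo a small slip: summing $\Phi^r_L(z_0,h,f)-\Phi^r_L(z_0,1,f)$ over $h$ parametrizing $Z(W_\Qb)$ produces $\prod_h\sigma_h(\beta)$, which is a partial norm landing in the CM field $E_W$ (the fixed field of the relevant $\sigma_h$'s), not $\Nm_{H/\Qb}(\beta)\in\Qb^\times$; the paper works with $\beta=\prod_{\sigma\in\Gal(E/E_W)}\sigma(\alpha)\in E_W$ and absorbs this into the final $\tilde\alpha$.

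The genuine gap is the sum result itself, which you flag as ``the main obstacle'' and then only sketch heuristically. You invoke ``sign factors in the Kudla--Millson/Borcherds lift aligning for even $r$'' and a Galois-invariance argument that alone cannot force a quantity to be a logarithm of a rational number. The paper's actual mechanism is concrete and quite different from what you describe: because $Z(W_\Qb)=Z(W(2)_\Qb)$, one applies \cite[Theorem 5.10]{BEY21} to write $\frac{2}{\deg Z(W_\Qb)}\Phi^r(Z(W_\Qb),f)=\mathrm{CT}\bigl(\langle f,\Cc_{(1,1),r}(\Ec_L^+)\rangle\bigr)$, where $\Ec_L^+$ is the holomorphic part of the derivative of the incoherent Eisenstein series of weight $(1,1)$. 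Example \ref{exmp:wt11} shows the constant-term contribution of $\Cc_{(1,1),r}$ vanishes for $r\ge1$, so only the positive Fourier coefficients of $\Ec_L^+$ survive, and those are rational multiples of logarithms of integers by \cite[Prop.\ 4.6]{BKY12}. This is also exactly where $r$ even and $n=4$ enter (the Rankin--Cohen differential operator strategy for the averaged Eisenstein series only applies in that case); your proposal does not isolate the role of these hypotheses. Without this specific chain, the reduction in your second step is asserted rather than established.
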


 \begin{rmk}
   \label{rmk:defQ}
   When $E/\Qb$ is Galois, there are many instances when $Z(W_\Qb)$ is defined over $\Qb$ (see e.g.\ Lemma 3.4 in \cite{BY06}). In particular, the CM points on $X_0(1)^2$ satisfy this condition (see Example \ref{exmp:biquad}).
   Therefore, the case for even $r$ in Theorem \ref{thm:GZ} follows from Theorem \ref{thm:conj}.
 \end{rmk}

 \begin{rmk}
   In a recent joint work \cite{BLY22}, we have proved Conjecture \ref{conj:main} when $E/\Qb$ is biquadratic. 
 \end{rmk}
 \subsection{General Proof Strategy}
 \label{subsec:obs}
 When $F = \Qb$, Conjecture \ref{conj:main} follows from Theorem 5.5 in \cite{BEY21}.
Here we give a sketch of its proof, which is analytic in nature. 
First, one expresses $\Phi^r_L(z, h, f)$ as an integral of $f$ against a suitable theta kernel $R^r_\tau \Theta_L(\tau, z, h)$, where $R_\tau$ is the raising operator (see \eqref{eq:diffops}).
Then a CM point $(z_0, h) \in Z(W_\Qb)$ leads to a rational splitting of  $V$ since $F = \Qb$.
Suppose it leads to an integral splitting of $L$ into $L = \tilde L \oplus N$ with $\tilde L$ and $N$  definite lattices of signature $(n, 0)$ and $(0, 2)$ respectively. Then the theta kernel becomes \footnote{For simplicity, we omit the detail about the modular forms being vector-valued.}
\begin{equation}
  \label{eq:Tsplit}
\Theta_L(\tau, z_0, h) =    \theta_{\tilde L}(\tau) \theta_{N}(\tau),
\end{equation}
Note that $\theta_N$ is non-holomorphic and has weight $-1$. 
One can then construct a preimage $\hat\theta_N$ of $\theta_N$ under the lowering operator $L_\tau$. It is a harmonic Maass form of weight 1.
The notion of harmonic Maass form was introduced in the seminal work of Bruinier and Funke \cite{BF04}, and studied around the same time by Zwegers in the context of modular completion of Ramanujan's mock theta functions \cite{ZwThesis}. 

Applying the Rankin-Cohen operator (see \eqref{eq:RC}) to $\theta_{\tilde L}$ and $\tilde\theta_N$ then gives us a preimage of $R^r_\tau (\theta_{\tilde L}(\tau) \theta_N(\tau))$ under the $L_\tau$. 
Putting these together and applying Stokes' theorem gives us 
\begin{align*}
  \Phi^r_L(z_0, h, f)
&= \int_{X_0(1)} f(\tau) R^r_\tau( \theta_{\tilde L}(\tau) \theta_N(\tau)) d\mu(\tau)
  = \int_{X_0(1)} f(\tau) L_\tau \Cc_{(n/2, 1), r}( \theta_{\tilde L}(\tau),  \tilde\theta_N(\tau)) d\mu(\tau)\\
&= \{f(\tau),  \Cc_{(n/2, 1), r}( \theta_{\tilde L}(\tau),  \tilde\theta^+_N(\tau))\}
= \mathrm{CT}(\tilde f (\tau) \cdot \tilde\theta_N^+(\tau)),
\end{align*}
where $\{,\}$ is a pairing of formal Fourier series (see \eqref{eq:pairing}) and $\mathrm{CT}$ denotes the constant term of a Fourier series.
The function $\tilde\theta^+_N$ is the holomorphic part of $\tilde\theta_N$, and the modular form $\tilde f$ in the last expression is weakly holomorphic with weight $-1$ and rational Fourier coefficients. 

The harmonic Maass form $\tilde\theta_N$ of weight 1 was studied in \cite{DL15, Ehlen17, Viazovska19}.
It was shown that the term $ \mathrm{CT}(\tilde f (\tau) \cdot \tilde\theta_N^+(\tau))$ is the logarithm of an algebraic number.
To see this, let $P_1, P_2$ be positive definite, unimodular lattices such that $\theta_{P_1} - \theta_{P_2}$ is holomorphic on $\Hb$. 
One can rewind the process above (with $r = 0$) and write
\begin{equation*}
  \begin{split}
  \mathrm{CT}(\tilde f (\tau) \cdot \tilde\theta_N^+(\tau))
  &= \int_{X_0(1)} \tilde f(\tau) \theta_N(\tau) d\mu(\tau)
    =  \int_{X_0(1)} \hat f(\tau)
    (\theta_{P_1}(\tau) - \theta_{P_2}(\tau))
    \theta_N(\tau) d\mu(\tau)\\
&= \Phi_{L_1}(z_1, \hat f) -     \Phi_{L_2}(z_2, \hat f),
\end{split}
\end{equation*}
where $\hat f = \tilde f \cdot (\theta_{P_1} - \theta_{P_2})^{-1}$ is weakly holomorphic and $L_i = P_i \oplus N$.
The functions $\Phi_{L_i}(z, \hat f)$ are the regularized Borcherds lifts of $\hat f$ and are logarithms of rational functions on Shimura varieties associated to $L_i$.
Their values at CM points $z_i$ are logarithms of algebraic numbers by the theory of complex multiplication.
This finishes the sketch of the proof. 
The process of multiplying and dividing by $\theta_{P_1} - \theta_{P_2}$ a manifestation of the embedding trick (see \cite[section 8]{Borcherds98}).

The partial averaging result in \cite[Theorem 1.2]{BEY21} used the coincidence that the average of $G_{r+1, f}(z_1, z)$ over the Galois orbit of $z_1$ is a higher Green function in $z$ on the modular curve, i.e.\ $n +2 = 3$.
This is a rather special phenomenon that only happens when $E/\Qb$ is biquadratic. 
By \eqref{eq:nd}, one is reduced to the case of $d = 1$ in Conjecture \ref{conj:main}.

For  $d \ge 2$, the lattice $L$ splits as $\tilde L \oplus \Res_{F/\Qb} N$ with $N \subset W$ an $\Oc$-lattice of signature $(( 0,2), (2 , 0)\dots, (2, 0))$, and the analogue of \eqref{eq:Tsplit} is
\begin{equation}
  \label{eq:Tsplit1}
\Theta_L(\tau, z_0, h) =    \theta_{\tilde L}(\tau) \theta^\Delta_{N}(\tau),
\end{equation}
where $\theta^\Delta_N(\tau)$ is the diagonal restriction of the Hilbert theta function $\theta_N(\tau_1, \dots, \tau_d)$ of weight $(-1, 1 \dots, 1)$ associated to $N$.
When one tries to execute the above strategy
to construct a preimage of $\theta^\Delta_N$ under the lowering operator, it is necessary to work with Hilbert modular forms, and there are some serious obstacles.

\begin{itemize}
\item 
  The analogous $\hat\theta_N$  should be a Hilbert modular form that
satisfies suitable properties similar to those of harmonic Maass forms in the setting of elliptic modular forms. 
However, there is no suitable extension of the notion of harmonic Maass forms to higher rank groups.

\item
As harmonic Maass forms have singularities at the cusps, one would expect the same for the analogous $\hat\theta_N$. 
However,  Koecher's principle would imply that such $\hat\theta_N$ could not have singularity only at the cusps, but in the interior of the Hilbert modular variety as well. This also holds for its diagonal restriction and complicates the application of Stokes' theorem.

\item
  To extract information about the Fourier coefficient of $\hat\theta_N$, one needs the generalization of Borcherds' lift over totally real fields.
In a large part, this has been accomplished in \cite{Br12} by considering regularized theta lifts of Whittaker forms. However, as the Shimura varieties appeared loc.\ cit.\ are compact, there is no Fourier expansion and one has limited information about the rationality of the lift. 
Furthermore,  it seems hopeless to direct generalize the embedding trick in \cite{Borcherds98} to  totally real fields.
\end{itemize}

Instead of studying the value at an individual CM point, one can average over CM points in $Z(W_\Qb)$, and those in $Z(W(j)_\Qb)$ for $2 \le j \le d$, where each $W(j)$ is a neighboring $F$-quadratic space of $W$ (see section \ref{subsec:QS}).
Then the rational quadratic spaces $\Res_{F/\Qb} W(j)$ are all isomorphic and
\begin{align*}
\sum_{1 \le j \le d} \sum_{(z, h) \in Z(W(j)_\Qb)} \Theta_L(\tau, z, h) &=
\theta_{\tilde L}(\tau) \otimes
\sum_{1 \le j \le d}E^\Delta_{N(j)}(\tau)
,\\
E_{N(j)}(\tau_1, \dots, \tau_d) &:= \sum_{(z_j, h(j)) \in Z(W(j)_\Qb)} \theta_{N(j)}((\tau_1, \dots, \tau_d), h(j))  ,
\end{align*}
with $N(j) \subset W(j)$ suitable lattices.
The Hilbert modular form $\theta_{N(j_0)}$ is holomorphic in $\tau_j$ for $j \neq j_0$ and has weight $(1, \dots, 1, -1, 1, \dots, 1)$ with $-1$ at the $j_0$-th place.
One can now explicitly construct an incoherent Hilbert Eisenstein series $\Ec_N$ of parallel weight 1 that maps to $E_{N(j)}$ under the lowering operator in $\tau_j$ for all $1 \le j \le d$.
For $d = 2$, this is the real-analytic Eisenstein series that appeared in the seminal works of Gross and Zagier on singular moduli and the Gross-Zagier formula \cite{GZ86, GZ85}.
It also appeared in \cite{BY06}, and has been combined with the regularized theta lifting of Borcherds to give fruitful generalizations of \cite{GZ86, GZ85} in \cite{BY09, BKY12}.

The advantage of $\Ec_N$ is that its Fourier coefficients can be computed explicitly, and shown to be logarithms of rational numbers. They furthermore can be interpreted as arithmetic intersection numbers.
On the other hand, it provides limited information about the arithmetic of higher Green function at a single CM point, as the differential operator in the strategy for $d = 1$ does not readily generalize except in the case $n = 2d = 4$ and $r$ even (see the discussion at the end of section 5 in \cite{BEY21}).
The higher Green functions studied by Gross, Kohnen and Zagier in  \cite{GKZ87} happen to be in this single case.

\subsection{Ideas}
We now describe some ideas and observations that help to overcome the obstacles mentioned in the previous section:
\begin{itemize}

\item
  For any holomorphic Hilbert cusp form $g(\tau)$ and $\ell \in \Nb$ sufficiently large, the product $g(\tau)^\ell\theta_N(\tau)$ has a modular preimage under the lowering operator in $\tau_1$ with no singularity in $\Hb^d$. Furthermore, this preimage is harmonic in $\tau_1$ and holomorphic in $\tau_2, \dots, \tau_d$. 
\item
  The generalization of Borcherds' lift in \cite{Br12} differs from the logarithm of an $F$-rational function by a locally constant function, which can be canceled out when considering differences of linear combinations of CM values.
\item
  The embedding trick only needs to work along the diagonal of $\Hb^d$, and one can apply the Siegel-Weil formula to replace the difficult task  of finding suitable positive definite $\Oc$-lattice $P$ to the simpler one of analyzing Eisenstein series.
\end{itemize}

The first idea is inspired by Zwegers' work \cite{ZwThesis}, where the product of a mock theta function and a classical theta function is completed to become a real-analytic modular form without singularity in $\Hb$.
Such products are also called ``mixed mock-modular forms'' in \cite{DMZ12} and are natural objects to consider.
Since the differential operators in $\tau_1, \dots, \tau_d$ are all independent, this idea can be applied in the setting of Hilbert modular forms.
The existence of the modular preimage will be proved using complex geometry (see section \ref{subsec:exist}), as done in the elliptic case in \cite{BF04}. 
The parameter $\ell$ serves to ensure certain cohomology group vanishes (see Theorem \ref{thm:vanish}).
The Rankin-Cohen differential operator can also be generalized to be applied on such functions (see the differential operator $\Dc_{\kappa, r}$ in \eqref{eq:Dc}).

The second idea is a compromise so that one can still use the generalization of Borcherds' lift in \cite{Br12} to deduce algebraicity results.
Considering differences is quite effective in removing the so-called ``normalizing constant'' in the regularized theta lift (see Theorem 1.1 in \cite{BY11}), as different linear combinations could give rise to the same normalizing constant.
Furthermore, considering the difference turns out to simplify many other situations as well. For example, it is enough to construct a preimage of $g(\tau)^\ell (\theta_{N_1}(\tau) - \theta_{N_2}(\tau))$ with $g(\tau)$ a holomorphic Hilbert cusp form.
This is accomplished in Theorem \ref{thm:L1pre}, using the ampleness of twists of determinant of the Hodge bundle on toroidal compactifications of Hilbert modular varieties, which is contained in Theorem \ref{thm:vanish} and a result of independent interest
\footnote{We thank the referee for a helpful suggestion that led to this result.}.
Also, one does not need to worry so much about the singularity of $\Phi^r_L$ (see Remark \ref{rmk:singularity} and Lemma \ref{lemma:Phidiff}).
The linear combination we take will come from multiplying this preimage with an Eisenstein series $E_{\tP}$. 
This leads to the crucial algebraicity result in Theorem \ref{thm:algebraicity}, which is of independent interest.

For the embedding trick, the last idea reduces the problem of dividing by a Hilbert cusp form $g$, which is constructed from theta series, to dividing by its diagonal restriction $g^\Delta$, which is an elliptic modular form.
Using the Siegel-Weil formula, we can relate $g$ to Hilbert Eisenstein series. 
By varying the weight, we will show that for any finite set of points in $\Hb^d$, there is a Hilbert Eisenstein series that does not vanish on this set (see Lemma \ref{lemma:supp}). This observation has its root in the classical work \cite{RSD} of Rankin and Swinnerton-Dyer  on zeros of elliptic Eisenstein series, and leads to the ``partition of unity'' result in Proposition \ref{prop:POU}.

By putting these ideas together, we are able to overcome the obstacles and prove Theorem \ref{thm:diff}.
When it is specialized to the case in Theorem \ref{thm:GZ}, we can combine this result about differences with the  result about partial averages in \cite{BEY21} to complete the proof.

\subsection{Outlook and Organization}
To prove Conjecture \ref{conj:main}, one needs algebraicity results concerning sums of CM points, in addition to the ``difference result'' in  Theorem \ref{thm:diff}.
For real quadratic $F$, we have worked out such a ``sum result'' when $E/\Qb$ is biquadratic in \cite{BLY22}, which has led to a proof of Conjecture \ref{conj:main} in this case.
When $d \ge 3$, one can try to relate the (in)coherent Eisenstein series to Eisenstein series on $\mathrm{O}(2, 1)$ over $F$, and realize them as suitable theta lifts from $\SL_2$ over $F$. 
 We plan to pursue this idea in a future work.

 The paper is organized as follows.
 In sections \ref{sec:prelim} and \ref{sec:func}, we setup notations and collect various preliminary notions from the literature.
Results such as Lemma \ref{lemma:supp} and Proposition \ref{prop:POU} seem to be new, and form a crucial trick in the proof of Theorem \ref{thm:diff}.
In section \ref{sec:mixmock}, we construct certain real-analytic Hilbert modular form in Theorem \ref{thm:L1pre} and prove algebraicity result about linear combinations of their Fourier coefficients in Theorem \ref{thm:algebraicity}.
Putting these together, we give the proofs of Theorems \ref{thm:GZ}, \ref{thm:diff} and \ref{thm:conj} in section \ref{sec:pfs}.

\textbf{Acknowledgement}: We thank Jan H.\ Bruinier, Ben Howard, Steve Kudla, Jakob Stix Torsten Wedhorn, and Tonghai Yang for helpful discussions. 
We also appreciate fruitful conversations with Stephan Ehlen and Maryna Viazovska over the years concerning CM values of higher Green functions.
We are thankful for the thorough reading and helpful comments by the anonymous referee. 
Finally, we thank Don Zagier for drawing our attention to this problem and many encouraging discussions.
 
\section{Preliminary}
\label{sec:prelim}

Fix an embedding $\overline{\Qb} \hookrightarrow \Cb$. 
Throughout the paper, $F$ will be a totally real field of degree $d$ with ring of integers $\Oc$, different $\df$ and discriminant $D$. For $1 \le j \le d$ and $m \in F$, denote $\sigma_j: F \hookrightarrow \Rb$ the real embeddings of $F$ and $m_j := \sigma_j(m)$.
We write $m \gg 0$ if $m \in F$ is totally positive, i.e.\ $m_j > 0$ for all $1 \le j \le d$.
For a number field $E$ with ring of integers $\Oc_E$, let $\Ab_E$ and $\hat E := E \otimes \hat \Zb$ be the adeles and finite adeles respectively.
The subgroup $\hat\Oc_E := \Oc_E \otimes \hat\Zb \subset \hat E$ is open and compact.

Given $\vtau = (\tau_1, \dots, \tau_d) \in \Hb^d$, we write $v = (v_1, \dots, v_d) := \Im(\tau) \in (\Rb_{>0})^d$.
For a function $f$ on $\Hb^d$, we will write $f^\Delta$ for its diagonal restriction to $\Hb \subset \Hb^d$. 
For $\alpha \in \Cb$, denote
$$
\ebf(\alpha) := e^{2\pi i \alpha}.
$$
For $x = (x_j)_{1 \le j \le d}, y = (y_j)_{1 \le j \le d} \in \Cb^d$, we denote
$$
xy = (x_j y_j)_{1 \le j \le d} \in \Cb^d,~
\tr(x) := \sum_{1 \le j \le d} x_j,~
\Nm(x) := \prod_{1 \le j \le d} x_j.
$$
For a semigroup $G$ and a $G$-graded ring $R = \oplus_{i \in G} R_i$, we use
$$
R^{G_0} := \bigoplus_{i \in G_0} R_i \subset R
$$
for a sub-semigroup $G_0 \subset G$.
Also, we denote $\Nb$ the positive integers and $\Nb_0 := \Nb \cup \{0\}$.

\subsection{Modular Forms}
For a congruence subgroup $\Gamma \subset \SL_2(\Oc)$, a finite dimensional, unitary representation $\rho$ of $\Gamma$ on a finite dimensional hermitian space $(\mathcal{V}, \langle\cdot, \cdot \rangle)$, and weight $\kappa = (k_1, \dots, k_d) \in \Zb^d$, let $\Ac^{}_{\kappa, \rho}(\Gamma)$ denote the $\Cb$-vector space of vector-valued, real-analytic functions
on $\Hb^d$ invariant with respect to $\rho$ on $\Gamma$ of weight $\kappa$, and bounded near the cusps of $\Gamma \backslash \Hb^d$.
It contains the subspaces $S_{\kappa, \rho}(\Gamma) \subset M_{\kappa, \rho}(\Gamma)$ of cuspidal and holomorphic Hilbert modular forms.
We also write $\vv{k} := (k ,\dots, k)$.

For any $f \in \Ac_{\kappa, \rho}(\Gamma)$, the function on $\Hb^d$
\begin{equation}
  \label{eq:Pet0}
  \| f(\tau) \|_{\mathrm{Pet}}^2  := \langle f(\tau), { f(\tau)} \rangle \Nm(v^{\kappa})
\end{equation}
is $\Gamma$-invariant.
Given $f, g \in \Ac_{\kappa, \rho}(\Gamma)$ such that at least one of them has exponential decay near the cusps, we can define their Petersson inner product
\begin{equation}
  \label{eq:Pet}
  (f, g)_{\mathrm{Pet}} := \frac{1}{\sqrt{D}} \int_{\Gamma \backslash \Hb^d} \langle f(\tau), { g(\tau)} \rangle \Nm(v^\kappa) d\mu(\tau),~ 
  \end{equation}
  where $d\mu(\tau) := d\mu(\tau_1)\dots d\mu(\tau_d)$ is the invariant measure on $\Hb^d$ (see Equation (4.21) in \cite{Br12}).
For   $\kappa = (k_1, \dots, k_d)$, denote the following related weights
\begin{equation}
  \label{eq:hatkappa}
  \tka := (-k_1, k_2, \dots, k_d),~
  \hka := (2 - k_1, k_2, \dots, k_d).
\end{equation}
We omit $\Gamma$, resp.\ $\rho$, from the notation when $F$ is fixed and $\Gamma = \Gamma_F := \SL_2(\Oc)$, resp.\ it is trivial.
When $F = \Qb$, we will use the superscript $!$ to indicate modular forms with singularities at the cusps. 

When $\rho$ is trivial, it is known
that $M_{\kappa}(\Gamma_F) = M_\kappa(\Gamma_F, \Qb) \otimes \Cb$, where $M_\kappa(\Gamma_F, \Qb)$ is the subspace of modular forms with rational Fourier coefficients.
This is also the case for $M_{\kappa, \rho}$ when $\rho$ is a Weil representation defined below (see \cite{McGraw03}, \cite[section 7]{Br12}).

For later purposes, we will be interested in the ($\Nb^d$-)graded ring
\begin{equation}
  \label{eq:Mc}
  \Mc_F := \bigoplus_{\kappa \in \Nb^d} M_\kappa(\Gamma_F, \Qb). 
\end{equation}

\subsection{Differential Operators}
For $k \in \Zb$, we have the usual raising, lowering and hyperbolic Laplacian operators on $\Hb$
\begin{equation}
\label{eq:diffops}
\begin{split}
  R_{\tau, k} &:= 2i \partial_\tau + \frac{k}{v},~
  \tR_{\tau, k} := (4\pi)^{-1} R_{\tau, k},~ 
L_{\tau, k} := -2iv^2 \partial_{\overline{\tau}}, \\
\Delta_{\tau, k} &:= - R_{\tau, k - 2} L_{\tau, k} = - L_{\tau, k + 2} R_{\tau, k} - k = -v^2 \lp \partial_u^2 + \partial_v^2 \rp + ik v (\partial_u + i \partial_v).
\end{split}
\end{equation}
They change the weight by $2, -2$ and 0 respectively.
For $\kappa = (k_1, k_2) \in  \Qb^2 $ and $r \in \Nb_0$, we can define the Rankin-Cohen operator on a real-analytic function $f(\tau_1, \tau_2) \in \Hb^2$ by
\begin{equation}
  \label{eq:RC}
  \begin{split}
    \Cc_{\kappa, r}(f)(\tau) &:= (2\pi i)^{-r} \sum^r_{s = 0} (-1)^s \binom{k_1 + r - 1}{s} \binom{k_2 + r - 1}{r - s}    \lp \partial_{\tau_1}^{(r-s)} \partial_{\tau_2}^{(s)} f \rp(\tau, \tau), \\
    &=  \sum^r_{s = 0} (-1)^{r-s} \binom{k_1 + r - 1}{s} \binom{k_2 + r - 1}{r - s}
    \lp \tR^{r-s}_{\tau_1, k_1}  \tR^s_{\tau_2, k_2} f\rp (\tau, \tau),
  \end{split}
\end{equation}
where
$\binom{m}{n} := \frac{m(m-1)(m-2)\dots(m-n + 1)}{n!}$ is the binomial coefficient.
The equality in the second line can be proved by considering the generating series constructed from the differential operators $\partial_\tau$ and $R_\tau$. The details are contained in section 5.2 of \cite{BGZ08}, in particular Propositions 18 and 19.
The first expression shows that the operator preserves holomorphicity. 
When $\kappa \in \Zb^2$, the second expression shows that it preserves modularity in the sense that
\begin{equation}
  \label{eq:RCmod}
  \Cc_{\kappa, r}(f) \mid_{k_1 + k_2 + 2r} \gamma =
    \Cc_{\kappa, r}(f \mid_{\kappa} (\gamma, \gamma))
\end{equation}
for any $\gamma \in \SL_2(\Rb)$.
The same result holds in the metaplectic setting when $\kappa \in \half \Zb^2$.
\begin{exmp}
  \label{exmp:wt11}
  Suppose $\kappa = (1, 1)$ and $f(\tau_1, \tau_2) = \ebf(\alpha_1 \tau_1 + \alpha_2 \tau_2)$ for $\alpha \in \Qb(\sqrt{D})$ with $\tr(\alpha) \neq 0$. Then
  \begin{align*}
\ebf(-\tr(\alpha)\tau)
&    \Cc_{\kappa, r}(f)(\tau) =
                             \sum^r_{s = 0}  \binom{r}{s}^2     \alpha_1^{r-s} (-\alpha_2)^{s}
                               =
                               \tr(\alpha)^r2^{-r}
                             \sum^r_{s = 0} \binom{r}{s}^2     \lp \frac{2\alpha_1}{\alpha_1 + \alpha_2}\rp^{r-s} \lp \frac{- 2\alpha_2}{\alpha_1 + \alpha_2}\rp^{s}\\
    &=
                               \tr(\alpha)^r2^{-r}
      \sum^r_{s = 0} \binom{r}{s}^2
(x + 1)^{r-s} (x-1)^{s}
      =
\tr(\alpha)^r
      P_r (x)
  \end{align*}
  with
$ x =     \frac{\alpha_1 - \alpha_2}{\alpha_1 + \alpha_2} =   \frac{\sqrt{D}\tr(\alpha/\sqrt{D})}{\tr(\alpha)} $
and
$P_r(X)$  the $r$-th Legendre polynomial, which has parity $(-1)^r$.
The last equality is a consequence of Rodrigues' formula (see (8.6.18) in \cite{AS64}).
This example will be used in the proof of Theorem \ref{thm:diff}.
\end{exmp}

When $f(\tau_1, \tau_2) = f_1(\tau_1)f_2(\tau_2)$ with $f_i$ modular forms of weight $k_i \in \half\Zb$, the function $[f_1, f_2]_r := \Cc_{\kappa, r}(f)$ is the usual Rankin-Cohen bracket of $f_1$ and $f_2$ \cite[section 5.2]{BGZ08}.
If $f_1$ is harmonic of weight $k_1$ and $f_2$ is holomorphic of weight $k_2$, then we have
\begin{equation}
  \label{eq:LRC}
  L_\tau [f_1, f_2]_r
  = \binom{k_1 +r -1}{r}
    L_\tau f_1 \tR^r_{k_2} f_2
  = \binom{k_1 +r -1}{r} \tR^r_{k_1 + k_2-2}L_\tau (f_1f_2)
\end{equation}
for any $r \in \Nb_0$. 
\begin{lemma}
  \label{lemma:RRC}
  For real-analytic functions $f, g: \Hb \to \Cb$, rational numbers $k, \ell$ and an integer $r \ge 0$,
  if $ k + \ell \not \in \{-2r+2, -2r+3, \dots, 0\}$\footnote{We take this set to be empty for $r =0 $.},
  then there exists  $c_{r, a, j} \in \Qb$ such that
  \begin{equation}
    \label{eq:comb}
     \tR^a_k f \tR^{r-a}_\ell g =  \sum_{j = 0}^r c_{r, a, j} \tR^{r-j}_{k + \ell + 2j} [f, g]_j
  \end{equation}
  for all $0 \le a \le r$. 
\end{lemma}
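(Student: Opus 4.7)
The plan is to induct on $r$, with the base case $r=0$ handled by the identity $fg = [f,g]_0$, i.e., $c_{0,0,0} = 1$. For $r \ge 1$, fix $r$ and assume the formula for all $r' < r$; write $u_a := \tR^a_k f \cdot \tR^{r-a}_\ell g$ for $0 \le a \le r$ and $v_j := \tR^{r-j}_{k+\ell+2j}[f,g]_j$ for $0 \le j \le r$, so that the goal is to express each $u_a$ as a $\Qb$-linear combination of the $v_j$'s.

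The first step exploits the Leibniz rule $\tR_{k+\ell}(FG) = \tR_k(F)\, G + F\, \tR_\ell(G)$ applied to the products $\tR^a_k f \cdot \tR^{r-1-a}_\ell g$: this produces $r$ identities $u_a + u_{a+1} = \tR_{k+\ell+2r-2}(\tR^a_k f \cdot \tR^{r-1-a}_\ell g)$ for $0 \le a \le r-1$. By the inductive hypothesis, each right-hand side is $\tR_{k+\ell+2r-2}$ applied to a $\Qb$-linear combination of $\tR^{r-1-j}_{k+\ell+2j}[f,g]_j$'s, which equals the same $\Qb$-linear combination of the $v_j$'s for $0 \le j \le r-1$ (since the weight of $\tR^{r-1-j}_{k+\ell+2j}[f,g]_j$ equals $k+\ell+2r-2$, and raising it once produces $v_j$). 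These $r$ relations unwind recursively to yield
\[
u_a = (-1)^a u_0 + (\Qb\text{-linear combination of } v_0, \ldots, v_{r-1}) \qquad (0 \le a \le r).
\]

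To obtain the last independent relation needed to solve for $u_0$, I would expand $v_r = [f,g]_r$ directly using the formula in \eqref{eq:RC}, giving $v_r = \sum_{a=0}^r (-1)^a \binom{k+r-1}{r-a}\binom{\ell+r-1}{a}\, u_a$. Substituting the expression for $u_a$ above, the coefficient of $u_0$ collapses by Vandermonde's convolution
\[
\sum_{a=0}^r \binom{k+r-1}{r-a}\binom{\ell+r-1}{a} = \binom{k+\ell+2r-2}{r},
\]
so that $\binom{k+\ell+2r-2}{r}\, u_0$ equals $v_r$ minus an explicit rational combination of $v_0, \ldots, v_{r-1}$. The hypothesis that $k+\ell+2r-2 \ne 0$ at every stage of the induction ensures this leading binomial factor does not vanish, whence $u_0$ (and by back-substitution all $u_a$) is expressed as the desired rational combination of the $v_j$'s, yielding the coefficients $c_{r,a,j} \in \Qb$.

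The main obstacle is purely bookkeeping: tracking how the binomial denominators $\binom{k+\ell+2r'-2}{r'}$ for $1 \le r' \le r$ propagate through the induction, and checking that each one is covered by the stated nonvanishing hypothesis. The crucial combinatorial input is Vandermonde's convolution, which repackages the mixed binomial sum arising from the Rankin-Cohen formula into a single factor of the expected form, so that the zero set of the overall denominator is controlled precisely by the linear conditions on $k+\ell$ singled out in the hypothesis.
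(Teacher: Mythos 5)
Your proposal is correct and follows essentially the same route as the paper's proof: induct on $r$, use the Leibniz property of the raising operator to produce the $r$ relations $u_a + u_{a+1}$, note that the kernel of this system is spanned by $((-1)^a)_a$, and close it with the defining expansion of $[f,g]_r$, whose pairing with that kernel vector is $\binom{k+\ell+2r-2}{r}$ by Vandermonde. The only cosmetic difference is that you unwind the recursion explicitly to solve for $u_0$, whereas the paper phrases the same step as a linear-independence check; the Vandermonde identity and the nonvanishing hypothesis play identical roles in both.
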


\begin{proof}
  This is done by induction on $r$. The base case of $r = 0$ is trivial. For the inductive step to prove the case $r+1$, we have $k+\ell \not\in\{-2r, -2r+1, \dots, 0\}$.
  Denote $x_a := \tR^a f \tR^{r+1-a}g$ for $0 \le a \le r+1$.
  Applying $\tR$ to \eqref{eq:comb} shows that  $x_a + x_{a+1}$  is a rational linear combination of $\tR^{r+1-j}[f, g]_j$'s over $0 \le j \le r$ for any $0 \le a \le r$.
  From definition, we also have
  $$
  \sum^{r+1}_{a = 0}
  c_a  x_{a}  =
  [f, g]_{r+1},~ c_a :=  (-1)^a \binom{k + r}{r+1-a} \binom{\ell + r }{a}. 
  $$
  Therefore, the right hand side below is a rational linear combination of $\tR^{r+1-j}[f, g]_j$ for $0 \le j \le r+1$ and it suffices to show that the square matrix on the left is invertible
  $$
  \binom{A}{c_0 ~ c_1 \dots c_{r} ~ c_{r+1}}
    \cdot
  \begin{pmatrix}
    x_0\\
    \vdots\\
    x_{r+1}
  \end{pmatrix}
  =
  \begin{pmatrix}
    x_0 + x_1 \\
    \vdots \\
    x_{r} + x_{r+1}\\
    [f, g]_{r+1}
  \end{pmatrix},~
  A :=
  \begin{pmatrix}
    1 ~ 1 ~ 0 ~ 0 \dots 0\\
    0 ~ 1 ~ 1 ~ 0  \dots 0\\
    \dots  \\
    0 ~ 0 \dots 0 ~ 1 ~ 1
  \end{pmatrix}.
  $$
  The right kernel of $A$ is spanned by the vector $((-1)^a)_{0 \le a \le r+1}$.
  On the other hand
$$
\sum^{r+1}_{a = 0} c_a\cdot (-1)^a 
=
\sum^{r+1}_{a = 0} 
\binom{k + r}{r+1-a} \binom{\ell + r }{a} =  \binom{k + \ell + 2r}{r+1}, 
$$
which is zero precisely when $k + \ell \in \{-2r, -2r+1, \dots, -r\}$.
This is not possible by the condition imposed on $k+\ell$. Therefore the matrix $\binom{A}{c_0 \dots c_{r+1}}$ is invertible.
\end{proof}
Now we will extend the Rankin-Cohen operator to functions on $\Hb^d$ for any $d \ge 2$ by first restricting it to $\Hb^2$, before applying the usual Rankin-Cohen operator.
This can be expressed as a linear combination of the generalized Rankin-Cohen operators studied in \cite{Lee04}.
For $f: \Hb^d \to \Cb$, $\kappa = (k_1, \dots, k_d) \in \Zb^d$, denote
$f^{\Delta, 1}(\tau', \tau_1) := f(\tau_1, \tau', \dots, \tau')$, $\kappa(1) := (\tr(\kappa) - k_1, k_1) \in \Zb^2$ and define 
\begin{equation}
  \label{eq:RCgen}
  \Cc^1_{\kappa, r}(f)(\tau) := \Cc_{\kappa(1), r}(f^{\Delta, 1})(\tau).
\end{equation}
It is easy to check that $  (  f \mid_\kappa (\gamma, \dots, \gamma))^{\Delta, 1} =
    f^{\Delta, 1} \mid_{\kappa(1)} (\gamma, \gamma)$ and 
\begin{equation}
  \label{eq:RCmodj}
  \Cc^1_{\kappa, r}(f)\mid_{\tr(\kappa) + 2r} \gamma = \Cc^1_{\kappa, r}(f \mid_\kappa (\gamma, \dots, \gamma))
\end{equation}
for all $\gamma \in \SL_2(\Rb)$.
Suppose $f(\tau) = q_1^{\alpha_1} \dots q_d^{\alpha_d}$ with $q_j := \ebf(\tau_j),~ \alpha_j \in \Cb$, then
\begin{equation}
  \label{eq:RCq}
  \Cc^1_{\kappa, r}(f) = \sum_{s = 0}^r (-1)^s
  \binom{k_1 + r - 1}{s} \binom{\tr(\kappa) - k_1 + r - 1}{r - s}
  \alpha_1^{r-s} (\alpha - \alpha_1)^{s}  q^{\alpha},
\end{equation}
where $\alpha := \alpha_1 + \dots + \alpha_d$. 
Analogous definitions also make sense when the index 1 above is replaced by any $j \in \{1, \dots, d\}$.

Finally for $f, g \in \Hb^d \to \Cb$ real-analytic and $\kappa \in \Zb^d$, we define
\begin{equation}
  \label{eq:Dc}
  \Dc_{\kappa, r}(f, g) := (g^{r+1})^\Delta \Cc^1_{\kappa, r}(f/g)
  = (2\pi i)^{-r} \lp \sum_{\substack{e = (e_1, \dots, e_d) \in \Nb_0^d\\ \tr(e) = r}}
  a_e g^{r+1} \partial_{\tau_1}^{e_1} \dots \partial_{\tau_d}^{e_d} (f/g) \rp^\Delta
\end{equation}
with $a_e \in \Zb$ explicit constants given by
$$
a_e :=  (-1)^{e_2 + \dots + e_d} \binom{k_1 + r - 1}{e_2 + \dots + e_d} \binom{k_2 + \dots + k_d + r - 1}{e_1} \frac{s!}{e_2! \dots e_d!}.
$$
From the definition, one sees that $\Dc_{\kappa, r}(f, g)$ is real-analytic on $\Hb$ and satisfies
\begin{equation}
  \label{eq:Dcmod}
\Dc_{\kappa, r}(f\mid_{\kappa + \lambda}(\gamma, \dots, \gamma), g\mid_{\lambda} (\gamma, \dots, \gamma)) =
\Dc_{\kappa, r}(f, g) \mid_{\tr(\kappa + (r+1)\lambda) + 2r} \gamma
\end{equation}
for $\kappa, \lambda \in \Zb^d$ and $\gamma \in \SL_2(\Rb)$.
The upshot of this operator is the following result.

\begin{lemma}
  \label{lemma:lower}
For $\kappa = (k_1, \dots, k_d) \in \Zb$, let $f : \Hb^d \to \Cb$ be a real-analytic function that is harmonic in $\tau_1$ of weight $k_1$ and holomorphic in $\tau_2, \dots, \tau_d$.
  For any holomorphic function $g: \Hb^d \to \Cb$, we have
  \begin{equation}
    \label{eq:lower}
    L_{\tau} \Dc_{\kappa, r}(f, g) =
 \binom{k_1 + r - 1}{r} 
    (g^\Delta)^{r+1} \tR^r_{\tr(\kappa) - 2} ((L_{\tau_1} f )/g)^\Delta
  \end{equation}
  for all $r \in \Nb_0$.
\end{lemma}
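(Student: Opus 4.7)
My plan is to reduce \eqref{eq:lower} to the one-variable Rankin--Cohen identity \eqref{eq:LRC} by exploiting the fact that, by definition, $\Cc^1_{\kappa,r}$ is a two-variable Rankin--Cohen bracket applied to a partial diagonal.

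First I would set $F := f/g$. Because $g$ is holomorphic on $\Hb^d$, in particular in $\tau_1$, $F$ remains harmonic in $\tau_1$ with $L_{\tau_1} F = (L_{\tau_1} f)/g$, and stays holomorphic in $\tau_2, \dots, \tau_d$. Its partial diagonal $F^{\Delta, 1}(\tau_1, \tau') := F(\tau_1, \tau', \dots, \tau')$ is then a real-analytic function on $\Hb^2$, harmonic in $\tau_1$ of weight $k_1$ and holomorphic in $\tau'$ of (formal) weight $k_2 + \dots + k_d$. By \eqref{eq:Dc},
\[
\Dc_{\kappa, r}(f, g) = (g^{r+1})^\Delta \cdot \Cc_{\kappa(1), r}(F^{\Delta, 1}),
\]
and since $(g^{r+1})^\Delta$ is holomorphic and so annihilated by $L_\tau$, the Leibniz rule gives
\[
L_\tau \Dc_{\kappa, r}(f, g) = (g^{r+1})^\Delta \cdot L_\tau \Cc_{\kappa(1), r}(F^{\Delta, 1}).
\]

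The main step will be to establish the following extension of \eqref{eq:LRC} to non-product two-variable functions, which I will denote $(\star)$:
\[
L_\tau \Cc_{(k, \ell), r}(H) = \binom{k + r - 1}{r} \tR^r_{k + \ell - 2}\bigl((L_{\tau_1} H)^\Delta\bigr)
\]
for real-analytic $H(\tau_1, \tau_2)$ harmonic in $\tau_1$ of formal weight $k$ and holomorphic in $\tau_2$ of formal weight $\ell$. For a product $H = H_1(\tau_1) H_2(\tau_2)$, holomorphicity of $H_2$ gives $(L_{\tau_1} H)^\Delta(\tau) = L_{\tau_1} H_1(\tau) \cdot H_2(\tau) = L_\tau(H_1 H_2)(\tau)$, so $(\star)$ reduces to \eqref{eq:LRC}. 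For general $H$, I would expand $H(\tau_1, \tau_2) = \sum_\beta H_\beta(\tau_1) q_2^\beta$ as a $q_2$-series (valid on $\Hb^2$ by holomorphicity in $\tau_2$), apply the product case term by term, and interchange $L_\tau$ and $\tR^r$ with the sum using uniform convergence on compact subsets of $\Hb^2$.

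Applying $(\star)$ with $H = F^{\Delta, 1}$, $k = k_1$, $\ell = k_2 + \dots + k_d$, and observing $(L_{\tau_1} F^{\Delta, 1})^\Delta(\tau) = \bigl((L_{\tau_1} f)/g\bigr)^\Delta(\tau)$, we obtain
\[
L_\tau \Cc_{\kappa(1), r}(F^{\Delta, 1}) = \binom{k_1 + r - 1}{r} \tR^r_{k_1 + \dots + k_d - 2} \bigl(((L_{\tau_1} f)/g)^\Delta\bigr),
\]
and multiplying by $(g^{r+1})^\Delta = (g^\Delta)^{r+1}$ yields \eqref{eq:lower}. The hard part will be establishing $(\star)$ for non-product $H$: the Fourier-series reduction is morally routine but requires justifying termwise interchange of the differential operators with the infinite sum. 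Alternatively, $(\star)$ admits a direct proof by expanding $L_\tau \Cc_{(k, \ell), r}(H) = -2iv^2 \partial_{\bar\tau} \Cc_{(k, \ell), r}(H)$ via $\partial_{\bar\tau}\vert_\Delta = \partial_{\bar\tau_1}\vert_\Delta + \partial_{\bar\tau_2}\vert_\Delta$, where the $\partial_{\bar\tau_2}$ contribution vanishes by holomorphicity of $H$ in $\tau_2$ and the remaining combinatorics of $\tau_1$-derivatives can be matched to the expansion of $\tR^r_{k + \ell - 2}$ on the right-hand side using harmonicity of $H$ in $\tau_1$.
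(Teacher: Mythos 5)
Your reduction is the same one the paper intends: set $F = f/g$, factor out $(g^{r+1})^\Delta$ by the Leibniz rule (since $L_\tau$ annihilates the holomorphic factor), and reduce to a two-variable version $(\star)$ of \eqref{eq:LRC} applied to $H = F^{\Delta,1}$. This is exactly what the paper's one-line proof (``follows directly from the definition and equation \eqref{eq:LRC}'') means, and you are right that \eqref{eq:LRC} --- stated for products $f_1(\tau_1)f_2(\tau_2)$ --- needs to be read as the formal differential identity $(\star)$ for a general two-variable $H$ harmonic in $\tau_1$ and holomorphic in $\tau_2$.

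One step in your preferred route would fail as written: holomorphicity of $H$ in $\tau_2$ on $\Hb^2$ does \emph{not} give a $q_2$-expansion; a Fourier/Laurent series in $q_2 = \ebf(\tau_2)$ requires invariance under $\tau_2 \mapsto \tau_2 + 1$, which is not assumed in the lemma (and $f$ is a general real-analytic function, not necessarily modular). If you want a series reduction, expand $H(\tau_1,\tau_2)$ as a Taylor series in $\tau_2$ around the diagonal point; each term $H_n(\tau_1)(\tau_2-\tau)^n$ is a product with the second factor holomorphic (and the ``weight'' $\ell$ is a purely formal parameter, so \eqref{eq:LRC} applies). Better still, the direct approach you sketch at the end --- split $\partial_{\bar\tau}|_\Delta = \partial_{\bar\tau_1}|_\Delta + \partial_{\bar\tau_2}|_\Delta$, kill the $\tau_2$-contribution by holomorphicity, and use the commutation relations $L_{k+2j}\tR^j_k = -j(k+j-1)(4\pi)^{-1}\tR^{j-1}_k$ (valid both for harmonic input in $\tau_1$ and for holomorphic input in $\tau_2$) --- is the clean proof of $(\star)$, avoids any convergence question, and is what actually underlies \eqref{eq:LRC} in the first place. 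With that substitution your proof is complete and matches the paper's.
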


\begin{proof}
  This follows directly from the definition and equation \eqref{eq:LRC}.
\end{proof}
We can also componentwisely apply $\Dc_{\kappa, r}$ when $f$ is vector-valued, in which case we also write $\Dc_{\kappa, r}(f, g)$, and the result above holds as well.

\subsection{Quadratic Space and Shimura Variety}
\label{subsec:QS}
Let $V$ be a finite dimensional $F$-vector space of dimension $n+2 \ge 0$ with a non-degenerate quadratic form $Q$.
For our purpose, $n$ is even when $d \ge 2$, i.e.\ $F \neq \Qb$.
Denote $V_{\sigma_j} := V \otimes_{F, \sigma_j} \Rb$ for $1 \le j \le d$, which is an $\Rb$-quadratic space of signature $(p_j, q_j)$, and $V(\Rb) = V \otimes_\Qb \Rb = \oplus_i V_{\sigma_i}$ is an $\Rb$-quadratic space of signature $(p, q)$ with $p = \sum_j p_j, q = \sum_j q_j$. 
We say that $V$ is totally positive if $V(\Rb)$ is positive definite. %
The symmetric domain $\Db$ associated to $\SO(V(\Rb))$ is realized as the Grassmannian of $q$-dimensional negative definite oriented subspaces of $V(\Rb)$. It consists of 2 components unless $q = 0$, in which case it is a point.

Let  $\GSpin_V$ be the general spin group of $V$.
We will be interested in the $\Qb$-algebraic group
\begin{equation}
  \label{eq:H}
H = H_V := \Res_{F/\Qb} \GSpin_V,
\end{equation}
which fits into the exact sequence
 \begin{equation}
   \label{eq:Hext}
1 \to Z \to H \to \Res_{F/\Qb} \SO(V) \to 1
 \end{equation}
with $Z(\Qb) \cong  F^\times$.
Denote $\nu: C(V) \to  F^\times$ the spinor norm on the Clifford algebra $C(V)$ of $V$, which induces a surjection $\nu: H \to T := \Res_{F/\Qb} \mathbb{G}_m$ of algebraic groups. 

\begin{exmp}
  \label{exmp:H2}
  More generally, the group $\GSpin$ can be defined for a quadratic module $M$ over a commutative ring $R$. 
  For a nice example, we consider the hyperbolic plane, where $M = R^2$ is a free $R$-module with quadratic form $Q((a, b)) = ab$.   Furthermore denote $e_1, e_2$ the images of $(1, 0), (0, 1) \in M$ in the Clifford algebra $C(M)$, and $e_0 := e_1 e_2, e_3 := e_2 e_1 \in C(M)$. 
  Then we have $e_0 + e_3 = 1$ in $C(M)$ and an $R$-algebra isomorphism
  $$
C(M) = \bigoplus_{i = 0}^3 R \cdot e_i \cong M_2(R),~ (a_0, a_1, a_2, a_3) \mapsto \pmat{a_0}{a_1}{a_2}{a_3}.
$$
The even Clifford algebra $C^0(M)$ corresponds precisely to the diagonal matrices in $M_2(R)$. The group $\GSpin$ then consists of invertible diagonal matrices, and the spinor norm $\nu$ is just the determinant. 
\end{exmp}

For the rest of this subsection, suppose $V$ has signature
\begin{equation}
  \label{eq:sign}
 ((n, 2), (n+2, 0), \dots, (n+2, 0)),~ 2 \mid n.
\end{equation}
Then the hermitian symmetric space associated to $H$ can be realized as the Grassmannian $\Db = \Db_V = \Db^+ \sqcup \Db^-$ of oriented negative-definite 2-planes of $V_{\sigma_1}$.
If we denote $V_\Cb := V \otimes_{F, \sigma_1} \Cb$ and extend the quadratic form $\Cb$-bilinearly to $V_\Cb$, then we can identify $\Db$ with the quadric
\begin{equation}
  \label{eq:Hc}
  \Hc := \{[Z]\in \Pb(V_\Cb): (Z, Z) = 0,~ (Z, \bar Z) < 0\},
\end{equation}
in the projective space $\Pb(V_\Cb)$ by sending $[Z = X +iY]$ to the oriented 2-plane spanned by the ordered basis $\{X, Y\} \subset V_{\sigma_1}$.
This endows $\Db$ with a complex structure.
We can furthermore identify the tube domain
$$
\Hs := \{z \in V_0 \otimes_\Rb \Cb: Q(\Im(z)) < 0\},
$$
where $V_0 := V_{\sigma_1} \cap a^\perp \cap b^\perp$ for isotropic vectors $a, b \in V_{\sigma_1}$ with $(a, b) = 1$, with $\Hc$ by sending $z$ to the class of
$$
w(z) := z + a - Q(z) b
$$
in $\Pb(V_\Cb)$.
For $\gamma \in H(\Rb)$, we have the automorphy factor $j(\gamma, z) = (\gamma w(z) , b)$ from
$$
\gamma w(z) = j(\gamma, z) w(\gamma z).
$$

For $z \in \Db^\pm$, denote $\bar z \in \Db^\mp$ the 2-plane with the opposite orientation.
The subgroup of $H(\Rb)$ fixing $\Db^+$ is the subgroup $H(\Rb)_+$ consisting of elements with totally positive spinor norm.
For a compact open $K \subset H(\hat\Qb)$, the $\Cb$-points of the Shimura variety associated to $H$
\begin{equation}
  \label{eq:XK}
X_{V, K} =  X_K := H(\Qb) \backslash \Db \times H(\hat\Qb) /K
\end{equation}
is a complex quasi-projective variety of dimension $n$, and has a canonical model over $\sigma_1(F)$ \cite{Shih}.
When $V$ is anisotropic over $F$, the variety $X_K$ is projective.

\begin{exmp}
  \label{exmp:M2Q}
  For $F = \Qb$ and $(V, Q) = (M_2(\Qb), \det)$, we have
  $$
  \Hb^2 \cup (\Hb^-)^2 \cong \Db,~ (z_1, z_2) \mapsto  \Rb \Re Z + \Rb \Im Z,
  $$
  where the line spanned by
  $Z = Z(z_1, z_2) := \smat{z_1}{-z_1z_2}{1}{-z_2} \in V(\Cb)$ is in the quadric $\Hc$ defined in \eqref{eq:Hc}.
  For a congruence subgroup $\Gamma \subset \SL_2(\Zb)$, there exists compact open $K_\Gamma \subset H(\hat\Qb)$ such that the connected component of the Shimura variety $X_{V, K_\Gamma}$
  can be identified with the product of modular curves $X_\Gamma \times X_\Gamma$. See section 3.1 in \cite{YY19} for more details.
\end{exmp}

A  meromorphic modular form on $X_K$ of weight $w \in \Zb$ is a collection of meromorphic functions $\Psi(\cdot, h): \Hs \to \Cb$ for each $h \in H(\hat \Qb)$ satisfying
\begin{equation}
  \label{eq:Omod}
  \begin{split}
    \Psi(z, hk ) &= \Psi(z, h) \text{ for all } k \in K,\\
    \Psi(\gamma z, \gamma h) &= j(\gamma, z)^w \Psi(z, h),~ \text{ for all }\gamma \in H(\Qb)
  \end{split}
\end{equation}
and are meromorphic at the boundary\footnote{The boundary behavior is relevant for us when $d = 1$ as $V$ will otherwise be anisotropic.}.
For such a meromorphic modular form, we also denote
\begin{equation}
  \label{eq:Pet1}
 \|\Psi(z, h)\|_{\mathrm{Pet}} := |\Psi(z, h)| \cdot |y|^w,
\end{equation}
which is a real-analytic function
on $X_K$ (see section 2 of \cite{Br12}). 

To describe the connected components of $X_K$, we write
$$
H(\hat\Qb) = \coprod_j H(\Qb)_+ h_j K,
$$
where $H(\Qb)_+ = H(\Qb) \cap H(\Rb)_+$. Then for $n > 0$, we have $X_K \cong \coprod_{j} \Gamma_{h_j} \backslash \Db^+$ and (1.8) of \cite{Kudla97} gives us
\begin{equation}
  \label{eq:pi0}
  \pi_0(X_K) \cong H(\Qb)_+ \backslash H(\hat\Qb) / K \stackrel{\nu}{\cong}
  F^\times_+ \backslash   \hat F^\times/  \nu(K) \cong \Gal(E_K /F),  
\end{equation}
where $E_K/F$ is a finite Galois extension that the connected component $Y_K := \Gamma_1 \backslash \Db^+$ is defined over.
Furthermore, for $\sigma \in \Gal(E_K/F)$ associated to $\nu(h_j^{-1})$, we have $Y_K^\sigma \cong Y_{h_j K h_j^{-1}}$ over $E_K$ and
$$
X_K \cong \coprod_j Y_{h_j K h_j^{-1}} \cong \coprod_{\sigma \in \Gal(E_K/F)} Y_K^\sigma.
$$
When restricted to the center $Z$ in \eqref{eq:Hext}, the map $\nu$ above is simply the square map and its image consists of square elements in $\Gal(E_K/F)$.

When $n = 0$, the domain $\Db$ has two points and the group $\GSpin_V$ can be identified with $E_W^\times$ for a totally imaginary, quadratic extension $E_W$ over $F$, the norm from $E_W$ to $F$ is simply the spinor norm, and
\begin{equation}
  \label{eq:pi0a}
  X_K \cong \Db \times \Ab_{E_W}^\times / E_W^\times (E_W)^\times_\infty K.
\end{equation}

For $1 \le j \le d$, there is a unique $F$-quadratic space $V(j)$ with signature
$$
\mathrm{sig}(V(j)) = ((n+2, 0), \dots, (n+2, 0), (n, 2), (n+2, 0), \dots, (n+2, 0))
$$
and isomorphic to $V$ at all finite places.
They are neighboring quadratic spaces of an admissible incoherent quadratic space $(\Vb, Q)$ over $\hat F$ (see \cite[section 7]{BY11}).
One can carry out the construction before \eqref{eq:XK} to define $X_{V(j), K}$,
which is the $\Cb$-points of a Shimura variety defined over $\sigma_j(F)$.
There is a quasi-projective variety $\Xb_K$ defined over $F$ such that the base change to $\sigma_j(F)$ is $X_{V(j), K}$, and the union of $X_{V(j), K}$ over all $j$ is the $\Cb$-points of $\Xb_K$ considered as a scheme over $\Qb$ (see Lemma 7.1 of \cite{BY11}).

\subsection{Unimodular Lattice}
\label{subsec:unilat}
An $\Oc$-lattice  $L \subset V$, i.e.\ a finitely generated $\Oc$-module satisfying $L \otimes_\Oc F = V$, is called even, resp.\ integral, if $Q(L)$ is in $\df^{-1}$, resp.\ $\Oc$. 
For an even $\Oc$-lattice $(L, Q)$, the quadratic form $Q_\Zb(x):= \tr_{F/\Qb}Q(x)$ is $\Zb$-valued, and we denote
\begin{equation}
  \label{eq:L'}
  L' := \{y \in V: (y, L) \subset \df^{-1} \},
\end{equation}
which is the dual of the $\Zb$-lattice $L$ with respect to $Q_\Zb$.
For $\mu \in L'/L$ and $m \in F$, we write
\begin{equation}
  \label{eq:Lmmu}
  L_{m, \mu} := \{\lambda \in L + \mu: Q(\lambda) = m\} \subset L'.
\end{equation}
which is empty if $m \not\in  \df^{-1} + Q(\mu)$. 

Also, we denote
\begin{equation}
  \label{eq:SL}
 S_L := \oplus_{\mu \in \hat L'/\hat L} \Cb \phi_\mu \subset \Sc(V(\hat F)),~ \phi_\mu := \mathrm{char}(\hat L + \mu)
\end{equation}
the subspace of Schwartz functions with support on $\hat L' := L' \otimes \hat \Oc $ and constant on $\hat L := L \otimes \hat \Oc$.
Note that $\hat L' / \hat L = L' /L$ is a finite abelian group, and we write $\hat L_{m, \mu} := L_{m, \mu} \otimes \hat \Oc$.
For any $h \in H(\hat \Qb)$, the lattice \footnote{The inverse in this definition makes the action of $h$ a right action.}
  \begin{equation}
    \label{eq:L1}
   L_h := V \cap h^{-1} \cdot \hat L \subset V
 \end{equation}
  satisfies $h^{-1} \cdot \hat L = \hat L_h$ and is in the same genus as $L$. 
Using the left action of $h^{-1}$, we identify
\begin{equation}
  \label{eq:quadmodid}
L'/L = \hat L'/\hat L \cong 
 \hat L'_h/\hat L_h = L'_h/L_h.
\end{equation}
The linear isomorphism $\iota_h: S_L \to S_{L_h}$, which sends $\phi_\mu$ to $\phi_{h^{-1} \mu}$, then identifies $\rho_L$ with $\rho_{L_h}$. 

We say that a lattice $L$ is \textit{$\Zb$-unimodular} if $L' = L$.
Then the set
\begin{equation}
  \label{eq:UF}
  \Uc_F := \{(L, Q): L \text{ is an even } \Zb\text{-unimodular } \Oc\text{-lattice}\}
\end{equation}
is a commutative monoid with respect to $\oplus$. 
Let $\Uc_F^+ \subset \Uc_F$ denote the semigroup consisting of totally positive, non-trivial lattices.
This set is non-empty by the following result.
\begin{prop}
  \label{prop:unimodular}
For any  totally real field $F$, the semigroup $\Uc_F^+$ is non-trivial.
\end{prop}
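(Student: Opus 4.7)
The plan is to produce a non-trivial element of $\Uc_F^+$ by twisting the rank-eight positive definite even unimodular $\Zb$-lattice $E_8$. Let $(L_0, Q_0)$ denote $E_8$ and let $V := L_0 \otimes_\Zb F$, a totally positive definite $F$-quadratic space of dimension eight, with $F$-linear extension $Q$ of $Q_0$. For a fractional ideal $\af$ of $F$ and a totally positive $\delta \in F^\times$, I would consider the $\Oc$-lattice $L_{\af,\delta} := \af \otimes_\Zb L_0 \subset V$ equipped with the rescaled quadratic form $\tfrac{1}{\delta} Q$.

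Since $E_8$ is $\Zb$-unimodular, a direct calculation using the definition $L' = \{y \in V : (y, L) \subset \df^{-1}\}$ gives
$$
L_{\af,\delta}' = \delta \df^{-1} \af^{-1} \otimes_\Zb L_0.
$$
Hence $L_{\af,\delta}$ is $\Zb$-unimodular precisely when the ideal identity $(\delta) = \df \af^2$ holds, in which case the evenness condition $\tfrac{1}{\delta} Q(L_{\af,\delta}) \subset \df^{-1}$ follows automatically and total positive definiteness is immediate from positivity of $Q_0$ and $\delta$. The task therefore reduces to finding a fractional ideal $\af$ and a totally positive $\delta$ solving $(\delta) = \df \af^2$, i.e., to realizing $[\df]$ as a square in the narrow class group of $F$ with a suitable choice of generator.

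When $[\df]$ is a square in the narrow class group, one picks $\af$ with $[\af]^2 [\df] = 1$ and lets $\delta$ be a totally positive generator of $\df \af^2$, producing a rank-eight element of $\Uc_F^+$ directly. In the general case, where $[\df]$ need not be a square in the narrow class group, I would pass to higher rank: an orthogonal sum of such rank-eight pieces with distinct twisting data, and if necessary a non-diagonal $F$-quadratic form on a general projective $\Oc$-lattice of rank $8k$. Existence of a global totally positive definite, even, $\Zb$-unimodular lattice of some rank divisible by eight then follows from a Hasse--Minkowski argument, using local rescalings of $E_8 \otimes_\Zb \Oc_v$ by local generators of $\df_v$ at each finite place $v$ of $F$.

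The main obstacle is precisely the case when $[\df]$ is not a square in the narrow class group: the naive rank-eight tensor construction fails and one must rely on the higher-rank, local-to-global argument to build the lattice. The secondary technical point is to verify that the local Hasse invariants of the proposed pieces can be arranged to satisfy the product formula globally; this is automatic once the rank is taken sufficiently large, so a non-trivial element of $\Uc_F^+$ exists for every totally real $F$.
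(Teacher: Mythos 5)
Your first case --- producing $\af \otimes_{\Zb} E_8$ with form $\tfrac{1}{\delta}Q_0$ when $(\delta)=\df\af^2$ has a totally positive generator --- is correct, and the dual computation $L'_{\af,\delta}=\delta\df^{-1}\af^{-1}\otimes L_0$ is right. You also correctly identify that this only handles the case where $[\df]$ is a square in the \emph{narrow} class group, which can fail. However, your treatment of the remaining case is where the gap lies: the local-to-global sketch, while the right idea in outline, omits the two verifications that actually carry the weight. One must check Hilbert reciprocity for the resulting collection of local invariants (in fact this holds already at rank $8$: the local forms $\tfrac{1}{\delta_v}Q_0$ all have trivial discriminant and trivial Hasse--Witt invariant since rescaling a rank-$8$ form with square discriminant does not change either, and the archimedean positive-definite form also has trivial invariants, so the product formula is automatic and the detour through rank $8k$ and ``non-diagonal'' forms is unnecessary), and one must then cite a lattice-existence statement (the nonemptiness of a genus with prescribed compatible local lattices, e.g.\ from O'Meara or the theory of genera) rather than ``Hasse--Minkowski'', which only produces the global quadratic space, not the lattice inside it. As written, the passage ``this is automatic once the rank is taken sufficiently large'' asserts the conclusion without argument.

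The paper's proof takes a genuinely different route that sidesteps the narrow-class-group issue entirely. It invokes Chang's existence theorem (Satz 1 of \cite{Chang70}) to produce an $\Oc$-unimodular lattice $(L,Q)$ in a space $V=W^{\oplus 4}$ with $\dim_F W$ even; since $W^{\oplus 4}$ has trivial discriminant and Hasse invariant, that criterion is satisfied for $W$ of arbitrary signature, so the signature of $L$ can be chosen freely. It then applies Hecke's theorem that $[\df]$ is a square in the \emph{ordinary} class group $\Cl(F)$, writes $\df^{-1}=\af^2(\delta)$ with $\delta$ not necessarily totally positive, and twists to $(\af L,\delta Q)$; total positivity is recovered by having chosen the signature of $(L,Q)$ at each archimedean place to compensate for the sign of $\delta$ there. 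This buys a cleaner argument with a weaker class-group hypothesis (ordinary rather than narrow), at the cost of importing Chang's existence theorem as a black box, whereas your construction is more hands-on but must either prove the narrow-class-group square condition or fall back on the local-to-global machinery that you have only sketched.
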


\begin{proof}
  For an integral $\Oc$-lattice $L \subset V$, let $\df(L) \subset \Oc$ be the discriminant ideal of $L$ (see \cite{Chang70}). 
Then $\df(L) = \Oc$  if and only if $L^\# = L$, where $L^\# := \{y \in V: (y, L) \subset \Oc\} $ is the $\Oc$-dual of $L$. 
In this case, $L$ is called unimodular.
Satz 1 in \cite{Chang70} gives a necessary and sufficient condition for the existence of definite unimodular $\Oc$-lattices, which is easily seen to be satisfied when $V = W^{\oplus 4}$ with $ \dim_F(W)$ is divisible by 2.
Furthermore, for any one of the $2^d$ possible signatures for definite spaces, there is a space $V$ having this signature and containing a unimodular $\Oc$-lattice.
So for any $\alpha \in F^\times$, there is a definite space $V$ such that it becomes totally positive definite after scaling its quadratic form by $\alpha$. 

It is a well-known result of Hecke (see the last Theorem in \cite{Weil74}) that the class of $\df$ in $\Cl(F)$ is a square. 
So we can write $\df^{-1} = \af^2 (\delta)$ with $\af \subset \Oc$ and $\delta \in F$. 
Let $(L, Q)$ be a non-trivial, integral unimodular $\Oc$-lattice such that $\delta Q$ is totally positive definite. 
Then $(\af L, \delta Q)$ is an even $\Oc$-lattice and
$$
\delta (\lambda, \af L)  \subset \df^{-1} = \af^2 (\delta) 
\Leftrightarrow 
(\lambda, \af^{-1} L) \subset \Oc
$$
for all $\lambda \in (\af L)'$. 
So $\lambda \in \af L^\# = \af L$ and $(\af L, \delta Q)$ is non-trivial, $\Zb$-unimodular and totally positive definite. 
\end{proof}

\subsection{Special Cycles.}
\label{subsec:cycle}
Now suppose $V$ decomposes as $W \oplus U$ such that $U$ is totally positive subspace of dimension $r$.
Then the Grassmannian $\Db_U$ of $U$ consists of one point $z_U$, and
$\Db_W$ can be realized as  an analytic submanifold of $\Db$ via
$$
z_W \mapsto (z_W, z_U) \in \Db_V.
$$
Similarly, the algebraic group $H_W :=  \Res_{F/\Qb}\GSpin_W$, resp.\  $H_U :=  \Res_{F/\Qb}\GSpin_U$, is isomorphic to the  pointwise stabilizer of $U$, resp.\ $W$, in $H_V$, which induces $H_W \times H_U \hookrightarrow H_V$ and we write
\footnote{We will sometimes view $H_U, H_W$ as subgroups of $H_V$ to lighten the notation.}
$(h_W, h_U) \in H_V$ for $h_W \in H_W, h_U \in H_U$.
Then for $h \in H_V(\hat\Qb)$, the image of the natural map
$$
H_W(\Qb) \backslash \Db_W \times H_W(\hat\Qb) /H_W(\hat\Qb) \cap h K h^{-1},~ (z, h_1) \mapsto (z, h_1 h)
$$
defines a codimension-$r$ cycle on $X_K$, denoted by $Z(W, h)$. 
A word of caution about the notation: in \cite{Kudla97}, the items $\Db_W, H_W$ and $Z(W, h)$ were defined with $W$ replaced by $U$. We decide to change the notation here as $U$ will be varying later and it is important to keep track of $W$.

When $r = n$, the set $\Db_W = \{z_W^\pm\}$ consists of two elements and points of $Z(W, h)$ are called (small) CM points.
For a subfield $F_0 \subset F$, we can consider the $F_0$-quadratic space
\begin{equation}
  \label{eq:restrict}
W_{F_0} := \Res_{F/F_0} W.
\end{equation}
For any $F_0$-quadratic space $V_0 = W_{F_0} \oplus U_0$ with $U_0$ totally positive,
the image of the above homomorphism $H_{W_{F_0}} \hookrightarrow H_{V_0}$ is a torus denoted by $T:= T_W \subset H_{V_0}$. 
For any open compact $K \subset H_{V_0}(\hat \Qb)$, the torus $T$ gives rise to the CM 0-cycle $Z(W_{F_0}, h)$ on $X_{V_0, K}$ defined over $F$. Its $\Cb$-points are given by
\begin{equation}
  \label{eq:ZW}
  Z(W_{F_0}, h)(\Cb) =  T(\Qb) \backslash \{z_W^\pm\}  \times T(\hat \Qb) / K^h_T \to X_{V_0, K},~
  [z_W^\pm, t] \mapsto [z_W^\pm, th],
\end{equation}
 where $K^h_T := h K h^{-1} \cap T(\hat \Qb)$.
 These were called ``big CM points'' in \cite{BKY12} when $F_0 = \Qb$ and $U_0$ is trivial.
We omit $h$ from the notation when it is trivial.

To obtain a 0-cycle defined over $\Qb$, one considers the 0-cycle
\begin{equation}
  \label{eq:ZWb}
  Z(\Wb) := \sum_{1 \le j \le d,~ \tau_j = \sigma_j \circ \sigma_1^{-1}:\Rb \to \Rb}
  \tau_j (Z(W_\Qb)),
\end{equation}
where $\Wb$ is the admissible incoherent quadratic space with neighbors $W(j)$.
Note that $Z(\Wb)$ is $Z(W)$ in Equation (2.13) of \cite{BKY12}. 
The 0-cycles $\tau_j(Z(W))$ can be constructed as above with $W$ replaced by $W(j)$ for $1 \le j \le d$ (see \cite[Lemma 2.2]{BKY12}).

\begin{exmp}
  \label{exmp:biquad}
  We follow the discussions in \cite[section 3]{YY19} and \cite[section 3.2]{Li21} to realize the CM points appearing in Theorem \ref{thm:GZ} as big CM points.
  Let $d_1, d_2 < 0$ be discriminants such that $F  = \Qb(\sqrt{D})$ is a real quadratic field, where $D := d_1 d_2> 0$.
  We label the real embeddings $\sigma_j: F \to \Rb$ such that $\sigma_1(\sqrt{D}) = \sqrt{D}$ and $\sigma_2(\sqrt{D}) = - \sqrt{D}$.
  Then $E := E_1E_2$ is a CM extension of $F$ and becomes an $F$ quadratic space $W$ of signature $((0, 2), (2, 0))$ with respect to the quadratic form $Q(\mu) := -\frac{ N \mu\bar\mu}{\sqrt{D}}$.
For $i = 1, 2$, let $z_i = \frac{-b_i + \sqrt{d_i}}{2a_i}$ be a  CM point of discriminant $d_i$. Denote $H_i/E_i$ the  ring class field corresponding to $z_i$,  $H = H_1H_2$ and
\begin{equation}
  \label{eq:Gi}
G_i :=  \Gal(H_i/E_i) \subset \tG_i := \Gal(H_i/\Qb).  
\end{equation}
The group $\Gal(H/\Qb)$ embeds into $\tilde{G_1} \times \tilde{G_2}$ via restriction, under which the image of $\Gal(H/E)$ is a subgroup of $G_1 \times G_2$. 
If $(V, Q) = (M_2(\Qb), \det)$, then
  the map\footnote{  Here $(,)$ is the bilinear pairing on $M_2(\Cb)$ induced by the determinant.}
  $$
  V \to W_\Qb,~ \gamma \mapsto (\gamma, Z( z_1, z_2))
  $$
  is an isometry with $N = a_1a_2$.
  The CM 0-cycle $Z(W_\Qb)$ defined in \eqref{eq:ZW} is given by
  $$
  Z(W) = \sum_{\sigma \in \Gal(H/E)} (z_1, z_2)^\sigma + (-\overline{z_1}, -\overline{z_2})^\sigma
  = \sum_{ \substack{(\sigma_1, \sigma_2) \in G_1 \times G_2\\ \sigma_1\mid_{H_0} = \sigma_2\mid_{H_0}}}
  (z_1^{\sigma_1}, z_2^{\sigma_2}) + (-\overline{z_1}^{\sigma_1}, -\overline{z_2}^{\sigma_2}),
$$
where $H_0 = H_1 \cap H_2$.
Note that $H_0 = \Qb$ when $d_1, d_2$ are co-prime. 
On the other hand, we have
$$
  Z(W(2)_\Qb) = \sum_{\sigma \in \Gal(H/E)} (z_1, -\overline{z_2})^\sigma + (-\overline{z_1}, {z_2})^\sigma.
  $$
  Lemma 3.2 in \cite{Li21} tells us that $H_0/\Qb$ is abelian.
  Its proof even implies that every element in  $\Gal(H_0/\Qb)$ has order dividing 2.
  From these, we then know that the element $\sigma_2 \in \Gal(H_2/E_2)$ satisfying $z_2^{\sigma_2} = -\overline{z_2}$ is a square and hence trivial when restricted to $H_0$.
Therefore, $Z(W_\Qb) = Z(W(2)_\Qb)$, and $Z(W_\Qb)$ is already defined over $\Qb$.  %
\end{exmp}

On the other extreme, when $r = 1$, we have $W = (F x_0)^\perp$ for some $x_0 \in F$ with $Q(x_0) = m \gg 0$, and the cycle $Z(W, h)$ is a divisor.
We define a weighted divisor by the finite sum
\begin{equation}
  \label{eq:Zmphi}
  Z(m, \phi) := \sum_{H_{W}(\hat\Qb) \backslash H(\hat\Qb) / K} \phi(h^{-1} x_0) Z(W, h)
\end{equation}
for any $\phi \in \Sc(V(\hat F))^K$.
We also write
\begin{equation}
  \label{eq:Zmmu}
  Z(m, \mu) := Z(m, \phi_\mu)
\end{equation}
for $\mu \in L'/L$ and $L \subset V$ an even $\Oc_F$-lattice.

\subsection{A helpful Lemma.}
In this section, we record a result that will be helpful in studying zeros of definite theta functions.

\begin{lemma}
  \label{lemma:kronecker}
  Let $\theta_1, \dots, \theta_N \in \Rb$ be $\Qb$-linearly independent irrational numbers, $M \in \Nb$ and $b$ an integer with $0 \le b \le M-1$.
  For any $\alpha_1, \dots, \alpha_N \in \Rb$, there exists an infinite subsequence $\{n_i: i \in \Nb\} \subset \Nb$  such that
  $$
\lim_{i \to \infty} \ebf((Mn_i + b)\theta_j) = \ebf(\alpha_j)
$$
for all $1 \le j \le N$. 
\end{lemma}

\begin{proof}
  By replacing $\theta_j$ with $M\theta_j$ and $\alpha_j$ with $\alpha_j - b\theta_j$, we can suppose that $M = 1$ and $b = 0$.
  We first prove the case $\alpha_j = 0$ for all $1\le j \le N$ by constructing the sequence $\{n_i\}$ inductively.
  For any $\epsilon > 0$, 
  by Kronecker's approximation theorem \cite[Theorem 7.10]{Apostol90}, there exists $n, h_j \in \Zb$ such that
  $$
|n \theta_j - h_j - \epsilon/3| < \epsilon / 3
$$
for all $1 \le j \le N$, which is equivalent to
$$
0  < n\theta_j - h_j < 2\epsilon/3. 
$$
By replacing $n, h_j$ with $-n, -h_j$, we can ensure that $n \ge 1$ for $\epsilon < 3/2$, while $|n\theta_j - h_j| < \epsilon$ still holds.
Denote $n(\epsilon) :=n$ and $n_i := \max_{} \{n(1/i'): 1 \le i' \le i\}$. 
Then
$$
\lim_{i \to \infty} \ebf(n_i\theta_j) = 1,~ 1 \le j \le N,
$$
and the sequence $\{n_i\}$ is infinite since $\theta_j$'s are irrational.

In the general case, we can first use Kronecker's approximation theorem to produce a sequence $\{n'_i \} \subset \Zb$ such that $\lim_{i \to \infty} \ebf(n'_i \theta_j) = \ebf(\alpha_j)$ for all $1 \le j \le N$. 
For each $i$, we can find $i' > i$ such that $n''_i := n'_i + n_{i'}$ forms an increasing sequence in $\Nb$, where $\{n_i\}$ is the sequence we have constructed in the case all $\alpha_j$'s are 0. Then the new sequence $\{n''_i\}$ satisfies the condition of the lemma.
\end{proof}

\begin{lemma}
\label{lemma:seriesvanish}
  Suppose $\alpha_i, c_i \in \Cb$ for $i \in \Nb$ satisfy the condition that the series defining
$$
\phi(s) := \sum_{i \in \Nb} \alpha_i c_i^s
$$
converges absolutely for $s = s_0 \in \Rb$ and equals to 0 for all but finitely many $s \in \Zb_{> s_0}$. Then we have
\begin{equation}
  \label{eq:coeff0}
\sum_{i \in \Nb,~ c_i = c} \alpha_i = 0  
\end{equation}
for any $c \in \Cb^\times$.
In other words,  $\phi(s)$ is identically zero.
\end{lemma}

\begin{rmk}
  \label{rmk:Freitag}
  If $\alpha_i > 0$ for all $i \in \Nb$, then $c_i = 0$ for all $i \in \Nb$. This strengths Lemma $5.6_1$ in \cite{Freitag}.
\end{rmk}
\begin{proof}
  Without loss of generality, we take $s_0 = 0$. 
After rearranging and scaling all the $c_i$'s, we can suppose that $|c_i| \ge |c_{i+1}|$ for all $i \in \Nb$, and $1 := |c_1| = |c_m| > |c_{m+1}|$. 
Denote 
$$
\phi_1(s) := \sum_{i = 1}^m \alpha_i c_i^s.
$$
Using induction, it is then enough to prove the lemma for $\phi_1(s)$. 

The condition $\phi(s) = 0$ for all but finitely many $s \in \Nb$ implies that
$$
\lim_{s \to \infty} \left| \phi_1(s) \right| = 
\lim_{s \to \infty} \left| \phi(s) - \phi_1(s) \right| = 0.
$$
For $1 \le i \le m$ with $c_i \neq 0$, we can now write 
$$
c_i =  \zeta^{r_{i, 0}}\ebf \lp \sum_{j = 1}^N r_{i, j} \theta_j \rp
$$ 
with $\zeta = \ebf(1/M)$ for some $M \in \Nb$, $(r_{i, j})_{0 \le j \le N} \in \Zb/M\Zb \times \Zb^N$ and $\theta_j \in \Rb$ such that $1, \theta_1, \dots, \theta_N$ are  $\Qb$-linearly independent.
Then we have $c_i = c_{i'}$ if and only if $r_{i, j} = r_{i', j}$ for all $0 \le j \le N$. 
For any integer $0 \le b \le M-1$,  we have
$$
\phi_1(s) = f_b(\ebf(s \theta_1), \dots, \ebf(s \theta_N)),~ s \in M \Nb + b,
$$
where $f_b(z_1, \dots, z_n) : = \sum_{i = 1}^m \alpha_i (\zeta^b)^{r_{i, 0}} \prod_{j = 1}^N z_j^{r_{i, j}} \in \Cb[z_1, z_1^{-1}, \dots, z_N, z_N^{-1}]$.

For any $\beta \in (\Rb/\Zb)^N$, Lemma \ref{lemma:kronecker} implies that 
there exists an infinite subsequence $\{n_k: k \in \Nb\} \subset \Nb$ such that 
$$
\lim_{k \to \infty} (\ebf((Mn_k + b) \theta_1), \dots, \ebf((M n_k + b) \theta_N)) = \beta. 
$$
Since $f_b$ is continuous, we then have
$$
f_b(\beta) = \lim_{k \to \infty} f_b (\ebf((M n_k + b) \theta_1), \dots, \ebf((M n_k + b) \theta_1)) = \lim_{s \to \infty} \phi_1(s) = 0. 
$$
This implies that the polynomial $f_b$ is identically zero, or equivalently
$$
\sum_{1 \le i \le m,~ (r_{i, j})_{1 \le j \le N} = r}
\alpha_i (\zeta^{r_{i, 0}})^b
 = 0
$$
for all $r \in \Zb^N$ and $b \in \Zb/M\Zb$.
This then implies
$$
\sum_{1 \le i \le m,~ (r_{i, j})_{0 \le j \le N} = r'}
\alpha_i
 = 0
 $$
 for all $r' \in \Zb/M\Zb \times \Zb^N$  and $\phi_1(s)$ is identically 0.
So the lemma holds for $\phi_1$, and hence also for $\phi$ by induction.
\end{proof}
As an immediate consequence, we have the following result.
\begin{cor}
\label{cor:bijection}
Let $\{c_i: i \in \Nb\}, \{b_j: j \in \Nb\}$ be subsets of $\Cb$, and
$$
I := \{i \in \Nb: c_i \neq 0\},~
J := \{j \in \Nb: b_j \neq 0\}. 
$$
Suppose there is $s_0 \in \Rb$ such that the series
$$
f(s) := \sum_{i \in \Nb} c_i^s,~ 
g(s) := \sum_{j \in \Nb} b_j^s
$$
converge absolutely for $s = s_0$ and $f(s) = g(s)$ for all but finitely many $s \in \Zb_{> s_0}$, then there is a bijection $\sigma: I \to J$ such that $c_i = b_{\sigma(i)}$ for all $i \in I$.
In particular, $I$ is finite if and only if $J$ is finite, in which case they have the same size. 
\end{cor}

\section{Functions}
\label{sec:func}
\subsection{Weil Representation and Theta Function}
Let $G := \Res_{F/\Qb}(\SL_2)$ and
$$
\Gamma := \SL_2(\Oc_F) \subset G(\Rb) \subset G(\Ab).$$
Also denote
$\Gamma_f := \SL_2(\hat\Oc) \subset G(\hat\Qb)$.

For an $F$-quadratic space $(V, Q)$ of even dimension, let $\omega = \omega_\psi$ be the Weil representation of $ G(\Ab)$ on the space of Schwartz functions $\Sc(V(\Ab_F)) = \Sc(V(\hat F)) \otimes  \bigotimes_{1 \le j \le d} \Sc(V_{\sigma_j})$, where $\psi$ is the standard additive character on $F \backslash \Ab_F$.

At the infinite local place, suppose $(W, Q_W)$ is an $\Rb$-quadratic space of signature $(p, q)$ with $2 \mid (p + q)$.
For a point $w$ in the symmetric space $\Db_W$ associated to $\SO(W)$, we obtain an orthogonal decomposition $W = w^\perp \oplus w$ and a Schwartz function
\begin{equation}
  \label{eq:varphipq}
  \phi^{p, q}(w, \lambda) := \exp(-2\pi (Q_W(\lambda_{w^\perp})  + Q_W(\lambda_w)))
\end{equation}
in $\Sc(W)$, which is acted on by
$\SL_2(\Rb)$
via the Weil representation $\omega_W$ to produce
\begin{equation}
  \label{eq:varphiWtau}
  \phi^W(\tau, w, \lambda) := v^{q/2} \ebf(Q(\lambda_{w^\perp}) \tau + Q(\lambda_w) \overline{\tau}).
\end{equation}
Note that
\begin{equation}
  \label{eq:wbar}
  \phi^W(\tau, w, \lambda) = \phi^W(\tau, \bar w, \lambda)  
\end{equation}
as %
it is independent of the orientation of $w$. 
If $q = 0$, then the expressions in \eqref{eq:varphipq} and \eqref{eq:varphiWtau} are independent of $w$ and we omit them from the notation.
In addition, we will also be interested in the following ``singular Schwartz function'' when $q \ge 1$
\begin{equation}
  \label{eq:phi*}
  \phi^{W, *}(\tau, w, \lambda) := 
\ebf(Q(\lambda) \tau)
  \begin{cases}
- (-4\pi Q(\lambda_w))^{1 - q/2} \Gamma(q/2 - 1, -4\pi Q(\lambda_w) v)    ,& \lambda \not\in w^\perp,\\
\mathrm{CT}_{s = 0 } \frac{v^{s + q/2 - 1}}{s + q/2 - 1},& \lambda \in w^\perp.
  \end{cases}
\end{equation}
Here $\Gamma(s, x) := \int^\infty_x t^{s-1} e^{-t} dt$ is the incomplete Gamma function.
Direct calculations yield
\begin{equation}
  \label{eq:Lphi*}
  L_\tau \phi^{W, *}(\tau, w, \lambda) = \phi^W(\tau, w, \lambda).
\end{equation}
for all $w \in \Db_W$ and $\lambda \in W$.

To describe the finite local place, let $L \subset V$ be an even lattice and $S_L \subset \Sc(V(\hat F))$ the subspace as in \eqref{eq:SL}.
Then $\Gamma$ acts on the space $S_L$ via $\omega$, whose complex conjugate we denote by $\rho_L$.
Its explicit values on the generators of $\Gamma$ can be found in section 3.2 of \cite{Br12}.
Furthermore, it is unitary with respect to the hermitian pairing on $S_L$
\begin{equation}
  \label{eq:pair0}
  \langle\phi, \psi \rangle := \sum_{\mu \in L'/L} \phi(\mu) \overline{\psi(\mu)}
  ,~ \phi, \psi \in S_L.
\end{equation}
More generally, for lattices $L, M$ and $\phi \in S_{M \oplus L}, \psi \in S_{M}$, we define
$  \langle \phi, \psi \rangle_L \in S_L$ by
\begin{equation}
  \label{eq:pair1}
  \langle \phi, \psi \rangle_L (\mu) := \sum_{\nu \in M'/M} \phi((\nu, \mu)) \overline{\psi(\nu)},~ \mu \in L'/L.
\end{equation}
These pairings are then naturally defined for functions valued in $S_L$.

For the rest of the section, suppose $V_{\sigma_j}$ is positive definite whenever $j \ge 2$. 
Then $\Db_{V_{\sigma_j}}$ is $\Db$ for $j = 1$ and a point otherwise. 
For $\vtau = (\tau_j) \in \Hb^d, z \in \Db, h \in H(\hat\Qb)$ and $\phi_f \in \Sc(V(\hat F))$, we define the Siegel theta function
\begin{equation}
  \label{eq:thetaphif}
  \begin{split}
      \Theta(\vtau, z, h; \phi_f) &:= \sum_{\lambda \in V(F)} \phi_f(h^{-1} \lambda)  \phi_\infty(\tau, z,\lambda ),\\
  \phi_\infty(\tau, z, \lambda) &:=
 \phi^{V_{\sigma_{1}}}(\tau_1, z,\lambda )
  \bigotimes_{2 \le j \le d} \phi^{V_{\sigma_j}}(\tau_j,\lambda ).
  \end{split}
\end{equation}
Note that $\Theta(\tau, \bar z, h; \phi_f) = \Theta(\tau, z, h; \phi_f)$ by \eqref{eq:wbar}.
In the variable $\vtau$, this is a Hilbert modular form of weight
\begin{equation}
  \label{eq:kaV}
 \tka = \tka(V) := ((p_1 - q_1)/2, n/2+1 \dots, n/2 + 1) \in \Zb^d. 
\end{equation}
For any even lattice $L \subset V$, we also denote the associated theta function by
\begin{equation}
  \label{eq:ThetaL}
  \Theta_L(\vtau, z, h) := \sum_{\mu \in L'/L} \Theta(\vtau, z, h; \phi_\mu) \phi_\mu,
\end{equation}
which is valued in $S_L$.
When $h = 1$, we omit it from the notation.
The definition of $\Theta_L$ implies that
\begin{equation}
  \label{eq:HactTheta}
  \Theta_{L_{}}(\tau, z) =
\Theta_{L_{h^{}}}(\tau, z, h),
\end{equation}
where $L_{h^{}}$ is defined in \eqref{eq:L1} and $L'/L \cong L_h'/L_h$ via \eqref{eq:quadmodid}.

If $V_{\sigma_1}$ is definite, then $\Theta_L(\tau, z, h)$ is independent of $z$ and we write
\begin{equation}
  \label{eq:thetaL}
 \theta_L(\vtau, h) := \Theta_L(\vtau, z, h).
\end{equation}
In particular when $L = P$ is positive definite and $\Zb$-unimodular, this is a scalar-valued, holomorphic Hilbert modular form on $\Gamma_F$ of parallel weight $n/2 + 1$. 
We denote the graded subring
\begin{equation}
  \label{eq:MTheta}
  \Mc^\theta_{F} := \mathrm{Span}\{\theta_P(\tau): P \in \Uc_F^+
\} \subset \Mc_{F}.
\end{equation}

For future convenience, we also define 
\begin{equation}
  \label{eq:Theta*}
  \begin{split}
      \Theta^*(\vtau, z, h; \phi_f) &:= \sum_{\lambda \in V(F)} \phi_f(h^{-1} \lambda)  
 \phi^{V_{\sigma_{1}}, *}(\tau_1, z,\lambda )
  \otimes_{2 \le j \le d} \phi^{V_{\sigma_j}}(\tau_j,\lambda ),\\
\Theta^*_L(\tau, z, h) &:= \sum_{\mu \in L'/L} \Theta^*(\vtau, z, h; \phi_\mu) \phi_\mu.
  \end{split}
\end{equation}
The sums converge absolutely since the singular Schwartz function decays as a Schwartz function and $z^\perp \cap L'$ is contained in a positive definite lattice. 
For fixed $(z, h)$, it defines a real-analytic function in $\tau$, which satisfies the analogue of \eqref{eq:HactTheta} as well as
\begin{equation}
  \label{eq:LT*}
  L_{\tau_1} \Theta^*_L(\tau, z, h) = \Theta_L(\tau, z, h)
\end{equation}
by \eqref{eq:Lphi*}.
When $V_{\sigma_1}$ is negative definite, we have $q_1 = n+2$ and the function $\theta^*_L(\tau, h) := \Theta^*_L(\tau, z, h)$ can be written explicitly as
\begin{equation}
  \label{eq:theta*}
  \begin{split}
      \theta^*_L(\tau, h) &=
      -
  \sum_{\lambda \in h(L') \cap V,~ \lambda \neq 0} \ebf(\tr(Q(\lambda)\tau))
  (4\pi |Q(\lambda)_1|)^{-n/2}\Gamma(n/2, 4\pi|Q(\lambda)_1|v_1) \phi_\lambda\\
  & +    \mathrm{CT}_{s = 0} \frac{v_1^{s + n/2 }}{s + n/2 }  \phi_0.
  \end{split}
\end{equation}
Although $\theta^*_L$ is not modular in $\tau$, difference of such functions will become modular after adding suitable holomorphic functions (see Theorem \ref{thm:L1pre}).

\begin{rmk}
  \label{rmk:odddim}
  When $\dim_F V$ is odd, all the constructions above still hold by working with  metaplectic covers.
{As this is not needed for most of the applications, we refrain from introducing more notations and refer the readers to \cite{Br12}. }
\end{rmk}

\subsection{Eisenstein Series and Siegel-Weil Formula.}

Let
$B\subset \SL_2$ the standard Borel subgroup, and $I(s, \chi) = \otimes_v I(s, \chi_v) := \mathrm{Ind}^{\SL_2(\Ab_F)}_{ B(\Ab_F)} \chi |\cdot|^s$  the induced representation of $ G(\Ab_\Qb)$ for $\chi = \chi_V$ the quadratic Hecke character associated to an $F$-quadratic space $(V, Q)$ with signature $((p_1, q_1), (n+2,0), \dots, (n+2,0))$.
For a standard section $\Phi 
\in I(s, \chi) $, 
the Eisenstein series
\begin{equation}
  \label{eq:Eis}
  E( g, s, \Phi) := \sum_{\gamma \in B(F) \backslash \SL_2(F)} \Phi(\gamma  g, s)
\end{equation}
converges absolutely for $\Re(s) \gg 0$ and has meromorphic continuation to $s \in \Cb$.
When $s_0 := n/2$, the map $\lambda: \Sc(V(\Ab_F)) \to I(s_0, \chi)$ defined by
\begin{equation}
  \label{eq:SWsec}
\lambda(\phi)( g) = (\omega( g)\phi)(0)  
\end{equation}
is ${\SL}_2(\Ab_F)$-equivariant, and $\lambda(\phi)$ can be extended uniquely to all $s$ and produce a standard sections in $I(s, \chi)$.

The infinite part $\otimes_{1 \le j \le d} I(s,  \chi_{\sigma_j})$ is generated by functions $\Phi^\kappa_{\infty} := \prod_j \Phi_\Rb^{\kappa_j}$ with $\kappa = (\kappa_j) \in \Zb^d$ satisfying $\kappa_j \equiv n/2 + 1\bmod{2}$, where
$$
\Phi_\Rb([k_\theta, 1]) = e^{i \theta /2},~ - \pi < \theta \leq \pi
$$
is the image of $\phi^{q + 2, q}_\infty$ defined in \eqref{eq:varphipq} under $\lambda$ for any $q \in \Nb$ \cite[Lemma 4.1]{BY11}. 
For $\phi_f \in \Sc(V(\hat F))$ and $\tka = \tka(V)$ as in \eqref{eq:kaV},
the Eisenstein series
\begin{equation}
  \label{eq:Eis1}
  E(\vtau, \phi_f, \tka) := v^{- \tka/2} E( g_\vtau, s_0, \lambda(\phi_f) \otimes \Phi^{ \tka}_\infty)
\end{equation}
is a Hilbert modular form of weight $\tka$.
For any even lattice $L \subset V$, we have
\begin{equation}
  \label{eq:EL}
  E_L(\vtau, \tka) := \sum_{\mu \in L'/L} E(\vtau, \phi_\mu, \tka) \phi_\mu \in \Ac_{ \tka, \rho_L}.
\end{equation}

Suppose $q = 2$. 
For any compact open $K \subset H(\hat \Qb)$ stabilizing $\hat L$ and acting trivially on  $\hat L'/ \hat L$, we have the Siegel-Weil formula \cite[Lemma 4.3]{BY11}
\begin{equation}
  \label{eq:SW}
  E_L(\vtau, \tka) = \frac{1}{\mathrm{vol}(X_K)} \int_{X_K} \Theta_L(\vtau, z, h) \Omega^n,~ n \ge 1,
\end{equation}
where $\Omega$ is the K\"ahler form on $X_K$ normalized as in \cite{Br12} and $\mathrm{vol}(X_K)  = \int_{X_K} \Omega^n$. 
For $n = 0$, the Siegel-Weil formula yields
\begin{equation}
  \label{eq:SW0}
  E_L(\vtau, \tka) = \frac{2}{\mathrm{vol}(X_K)} \sum_{[z, h] \in X_K} \Theta_L(\vtau, z, h). 
\end{equation}

When $q = 0$, we have $\tka = \vv{s_0 + 1}$ and the lattice $P = L$ is totally positive definite.
The classical Siegel-Weil formula yields
\begin{equation}
  \label{eq:SWdef}
  \begin{split}
      E_P(\tau) &:=  E_P(\tau, \tka) = c_K^{-1} \sum_{h \in H(\Qb)\backslash H(\hat\Qb)/K} \theta_P(\tau, h),\\
  c_K &= \sum_{h \in H(\Qb)\backslash H(\hat\Qb)/K} 1 \in \Nb.
  \end{split}
\end{equation}
Though $\theta_{P^\ell} = \theta_P^\ell$, the Eisenstein series $E_{P^\ell}$ is almost never the same as $E_P^\ell$!

Furthermore suppose  that $P \in \Uc_F^+$ with rank $2\rf$. Then $E_P(\tau)$  is in $\Mc_F^\theta$ and coincides with the Hecke Eisenstein series for $F$ of parallel weight  $\rf = n/2 + 1$. 
It can be written as (see section 2 in \cite{BL23})
\begin{equation}
  \label{eq:Eisenseries}
  E_P(\tau) =
  \sum_{\beta \in \Pb^1(F)} c(\beta, \tau)^{-\rf},~
  c(\beta, \tau) :=
  \begin{cases}
    A_\beta\cdot \Nm(\tau + \beta)& \beta \in F,\\
    1 & \beta = \infty,
  \end{cases}
\end{equation}
for certain non-zero integers $A_\beta$ depending only on $\beta$. 
Note that since $P^\ell \in \Uc^+_F$ for any $\ell \in \Nb$, we have $E_{P^\ell}(\tau) \in \Mc^\theta_F$.
Applying Lemma \ref{lemma:seriesvanish}, we can deduce the following results about zeros of theta functions. 
\begin{lemma}
  \label{lemma:supp}
  Let $P \in \Uc_F^+$ and $S \subset \Hb^d$ a finite set of points.
  Then there exists $\ell \ge 1$ such that $E_{P^\ell}$ does not vanish on $S$.
\end{lemma}
\begin{proof}
  Since $\SL_2(\Rb)^d$ acts transitively on $\Hb^d$, we can write $S = \{\tau_0, g_1 \tau_0,\dots, g_N \tau_0\}$ with $\tau_0 \in \Hb^d$ and $g_i \in \SL_2(\Rb)^d$, where we set $g_0$ to be the identity.
  Assume that for every $\ell \ge 1$, there exists $0 \le i \le N$ such that $E_{P^\ell}(g_i \tau_0) = 0$.
  Then the function
  $$
  f_\ell(\tau) := \prod_{i = 0}^N E_{P^\ell}(g_i \tau)
  $$
  vanishes at $\tau = \tau_0$ for all $\ell \ge 1$. 
  Using the expression \eqref{eq:Eisenseries} we can write
  $$
  f_\ell(\tau) =
  \sum_{\beta_0, \dots, \beta_N \in \Pb^1(F)}
\lp  \prod_{i = 0}^N c(\beta_i, g_i \tau) \rp^{-\rf \ell}.
  $$
  By Remark \ref{rmk:Freitag}, we have $  \prod_{i = 0}^N c(\beta_i, g_i \tau_0) = 0$ for all $\beta_1, \dots, \beta_N \in \Pb^1(F)$, which is clearly a contradiction since  $  \prod_{i = 0}^N c(\infty, g_i \tau_0) = 1$.
  \end{proof}

  \begin{lemma}
    \label{lemma:theta0}
    For any $\tau_0 \in \Hb^d$, there exist $P_1, P_2 \in \Uc_F^+$ in the same genus such that $\theta_{P_1}(\tau_0) \neq \theta_{P_2}(\tau_0)$. 
  \end{lemma}

  \begin{proof}
    Assume otherwise. Then for any $P \in \Uc_F^+$, the function $\theta_{P}(\tau, h)$ takes the same value at $\tau_0$ for all $h \in H(\hat\Qb)$ and
    we have
    $$
    E_P(\tau_0) = c_K \sum_{h \in H(\Qb)\backslash H(\hat\Qb)/K} \theta_P(\tau_0, h)
    =
    \theta_P(\tau_0)  c_K \sum_{h \in H(\Qb)\backslash H(\hat\Qb)/K} 1 = \theta_P(\tau_0).
    $$
    Therefore $E_{P^\ell}(\tau_0) = \theta_{P^\ell}(\tau_0) = \theta_{P}(\tau_0)^\ell = E_P(\tau_0)^\ell$ for all $\ell \ge 1$ and $P \in \Uc_F^+$.
    Using expression \eqref{eq:Eisenseries}, we obtain
    $$
    \sum_{\beta \in \Pb^1(F)} c(\beta, \tau_0)^{-\rf \ell}
    = E_P(\tau_0)^\ell,
    $$
which contradicts Corollary \ref{cor:bijection} since $c(\beta, \tau_0) \neq 0$ for all $\beta \in \Pb^1(F)$.
  \end{proof}

\begin{prop}[Partition of Unity]
  \label{prop:POU}
  Let $F$ be a totally real field. Then there exist $m \in \Nb$, $\tP_i, P_{1, i}, P_{2, i} \in \Uc^+_F$ with ranks $2\trf_i$ and $2\rf_i$ for $1 \le i \le m$
  such that $P_{1, i}$ and $P_{2, i}$ are in the same genus for all $i$ and there is no $\tau_0 \in \Hb$ such that $(E_{\tP_i} (\theta_{P_{1, i}}  - \theta_{P_{2, i}}))^\Delta  (\tau_0) = 0$ for all $i$.

Furthermore, for any $d_1, \dots, d_m, e_1, \dots, e_m \in \Nb$, there exists elliptic  modular forms $g_i \in M^!_{-(\trf_i d_i + \rf_{i} e_i) d}$ with rational Fourier coefficients such
  \begin{equation}
    \label{eq:POU}
    \sum_{i = 1}^m g_i( E^{d_i}_{\tP_i} \cdot (\theta_{P_{1, i}} - \theta_{P_{2, i}})^{e_i})^\Delta = 1.
  \end{equation}
\end{prop}

\begin{proof}
  Start with a point $\tau_0 \in \Hb$, we can find $P_{1}, P_{2} \in \Uc^+_F$ in the same genus satisfying $(\theta_{P_{1}} - \theta_{P_{2}})^\Delta(\tau_0) \neq 0$ by  Lemma \ref{lemma:theta0}.
  If $(\theta_{P_{1}} - \theta_{P_{2}})^\Delta$ has no zero on $\Hb$, then we can apply Lemma \ref{lemma:supp} to take $\tP_1, \tP_2 \in \Uc^+_F$ such that $E_{\tP_1}^\Delta$ and $E_{\tP_2}^\Delta$ do not have common zero on $\Hb$.
  The forms $E_{\tP_1} \cdot (\theta_{P_{1}} - \theta_{P_{2}})$ and 
$  E_{\tP_2} \cdot (\theta_{P_{1}} - \theta_{P_{2}})$ satisfy the first claim.

Otherwise, let $\tau_2, \dots, \tau_m \in \Hb$ be the zeros of $(\theta_{P_{1}} - \theta_{P_{2}})^\Delta$ in a fundamental domain $\Fc$ of $\SL_2(\Zb) \backslash \Hb$.
  By Lemma \ref{lemma:theta0}, there exists $P_{1, i}$ and $P_{2, i}$
  such that $\theta^\Delta_{P_{1, i}}(\tau_i) \neq \theta^\Delta_{P_{2, i}}(\tau_i)$
  for all $2 \le i \le m$.
  Let $S \subset \Fc$ be the finite set of the zeros of $\prod_{1 \le i \le m} (\theta_{P_{1, i}}^\Delta - \theta_{P_{2, i}}^\Delta)$. 
  Applying Lemma \ref{lemma:supp}, we can find $\tP \in \Uc^+_F$ such that $E_{\tP}$ does not vanish on $S$.
  Let $S' \subset \Fc$ be the finite set of the zeros of $E_{\tP}^\Delta\prod_{1 \le i \le m} (\theta_{P_{1, i}}^\Delta - \theta_{P_{2, i}}^\Delta)$.
    We can apply Lemma \ref{lemma:supp} to find $E_{\tP_i}$ for $2 \le i \le m$ such that they do not vanish on $S'$.
    Now the forms $E_{\tP} \cdot (\theta_{P_{1}} - \theta_{P_{2}})$ and $E_{\tP_i} \cdot  (\theta_{P_{1, i}} - \theta_{P_{2, i}})$ with $2 \le i \le m$ satisfy the first claim.

  To prove the second claim, we can write
$$
\Delta^{- ( \trf_i d_i +  \rf_i e_i) d} (E^{12d_i}_{\tP_i} (\theta_{P_{1, i}} - \theta_{P_{2, i}})^{12e_i})^\Delta = A_i(j) \in M^!_0 = \Qb[j]
$$
with $j = j(\tau)$ the $j$-invariant and $A_i(x) \in \Qb[x]$.
The first claim implies $\gcd(A_1, \dots, A_m) = 1$, i.e.\ there exists $B_1,  \dots, B_m \in \Qb[x]$ such that
$$
\sum_{i = 1}^m B_i (x) A_i(x)  = 1.
$$
Setting $g_i := B_i(j) \Delta^{-(\trf_i d_i + \rf_i e_i) d}
(E^{11d_i}_{\tP_i} (\theta_{P_{1, i}} - \theta_{P_{2, i}})^{11e_i})^\Delta$
proves the second claim.
\end{proof}

\subsection{Higher Green Function}
We follow \cite{BEY21} to recall higher Green function on the Shimura variety $X_K$ for $F = \Qb$.
Let $V/\Qb$ be a quadratic space of signature $(n, 2)$ and $L \subset V$ an even lattice with $K$ an open compact stabilizing $\hat L$.
Also, we denote
\begin{equation}
  \label{eq:rho0}
  \sigma_0 := \frac{n}{4} - \frac{1}{2}.
\end{equation}
For $\mu \in L'/L$ and $m \in \Zb + Q(\mu)$, the automorphic Green function is defined by
\begin{equation}
  \label{eq:Phimmu}
  \Phi_{m, \mu}(z, h, s) := 2 \frac{\Gamma(s + \sigma_0 )}{\Gamma(2s)}
\sum_{ \lambda \in h(L_{m, \mu})}
\lp \frac{m}{Q(\lambda_{z^\perp})} \rp^{s + \sigma_0}
F\lp s + \sigma_0 , s - \sigma_0, 2s; \frac{m}{Q(\lambda_{z^\perp})} \rp,
\end{equation}
where $F(a, b, c; z)$ is the Gauss hypergeometric function \cite[Chapter 15]{AS64}.
The sum converges normally on $X_K \backslash Z(m, \mu)$ for $s > \sigma_0 + 1$ and defines an eigenfunction of the Laplacian $\Delta$ on $\Db$, normalized as in \cite{Br02}, with eigenvalue $\frac{1}{2} (s - \sigma_0 - 1)(s + \sigma_0)$. 
Furthermore, it has a meromorphic continuation to $s \in \Cb$ with a simple pole at $s = \sigma_0 + 1$, whose constant term is denoted by $\Phi_{m,\mu}(z, h, \sigma_0 + 1)$ and the regularized theta lift of Hejhal-Poincar\'e series of index $(m, \mu)$ \cite{Br02}.

At $s = \sigma_0 + 1 + r$ with $r \in \Nb$, the function $\Phi_{m, \mu}(z, h, s)$ is called a \textit{higher Green function}.
For the unimodular lattice $L = M_2(\Zb)$ and $z = Z(z_1, z_2)$ as in Example \ref{exmp:M2Q}, we have
\begin{equation}
  \label{eq:GPhi}
  \Phi_m(z, 1, r+1) = \frac{2(-1)^{r}}{\Gamma(2r+2)} G_{r+1}^m(z_1, z_2),
\end{equation}
where $G_{s}^m$ is defined in \eqref{eq:Grf}. 
For a harmonic Maass form $f = \sum_{m, \mu} c(m, \mu) q^{-m} \phi_\mu + O(1)  \in H_{k - 2r, \bar \rho_L}$ with $k:= -2\sigma_0$, define
\begin{equation}
  \label{eq:Phijf}
  \Phi_L^r(z, h, f) := r! \sum_{m > 0,~ \mu \in L'/L} c(m, \mu) m^r \Phi_{m, \mu} (z, h, \sigma_0 + 1 + r)
\end{equation}
to be the associated higher Green function.
Following from the work of Borcherds \cite{Borcherds98} and generalization by Bruinier \cite{Bruinier02} (also see \cite{Via11, BEY21}), the function $\Phi_L^r$ has the following integral representation
\begin{equation}
  \label{eq:Phijint}
  \Phi_L^r(z, h, f) = 
 \lim_{T \to \infty}  \int_{\Fc_T}^{} \langle \tR^r_\tau f(\tau),  \overline{\Theta_L(\tau, z, h)} \rangle d \mu(\tau)
=  (-1)^r \lim_{T \to \infty}  \int_{\Fc_T}^{} \langle f(\tau), \overline{\tR^r_\tau \Theta_L(\tau, z, h)} \rangle d \mu(\tau),
\end{equation}
where $\Fc_T$ is the truncated fundamental domain of $\Gamma_\Qb \backslash \Hb$ at height $T> 1$.
\section{Real-analytic Hilbert Modular Forms and Algebraicity of Pairing}
\label{sec:mixmock}

In this section, we will prove the existence of certain real-analytic Hilbert modular forms by generalizing the proof of Theorem 3.7 in \cite{BF04},
and give some results concerning their Fourier coefficients.
The notations $F, D, \Oc, \df, d$ are the same as in section \ref{sec:prelim}.
\subsection{Certain real-analytic Hilbert modular forms}
\label{subsec:exist}
Let $\rho = \rho_L$ be a Weil representation,  $\kappa = (k_1, \dots, k_d) \in \Zb^d$,  $\Gamma \subset \Gamma_F$ a congruence subgroup, and $X = X(\Gamma) = \Gamma \backslash \Hb^d$ be the open Hilbert modular variety. 
By adding  finitely many cusps to $X$, we obtain the Baily-Borel compactification $X^{\mathrm{BB}}$.
It can also be constructed as the Proj of the ring of holomorphic modular forms on $X$, and is a normal, Noetherian scheme over $\Cb$.
When $\Gamma$ is neat, we fix a smooth toroidal compactification  $\tX$ of $X$.
It is a compact complex manifold, and a desingularization of $X^{\mathrm{BB}}$. 
We also have the natural map $\pi: \tX \to X^{\mathrm{BB}}$   and let
$\E$ be the boundary divisor on $\tX$.
Suppose $\tX$ is associated to a projective $\Gamma$-admissible decomposition (see \cite[section II.2]{AMRT}).

Denote  $\Oc$ and $\Ec^{p, q}$ the sheaf of holomorphic functions and smooth differential forms of type $(p, q)$ on $\tX$ respectively, and take the subsheaf $\Ec' := \ker(\Ec^{0, 1} \stackrel{\bar\partial}{\to } \Ec^{0, 2})$. 
Then the Dolbeault resolution of $\Oc$ gives us the short exact sequence
\begin{equation}
  \label{eq:ses}
0 \to \Oc \to
\Ec^{0, 0} \stackrel{\bar\partial}{\to}  \Ec' \to 0.
\end{equation}
For a Cartier divisor $\D$ and quasi-coherent sheaf $\Fc$ on $\tX$, we write $\Fc(\D)$ for the  corresponding twisting sheaf.
Also, let $\Lc_{\kappa, \rho}$ be the sheaf of modular forms of weight $\kappa$ and representation $\rho$ on $X$.
It extends to $X^{\mathrm{BB}}$ and $\tX$ by Koecher's principle, and we use $\Lc_{\kappa, \rho}  $ and $\tLc_{\kappa, \rho}$ to denote these extensions.
In particular,
\begin{equation}
  \label{eq:H0X}
  H^0(\tX, \Ec^{0, 0} \otimes \tLc_{\kappa, \rho}) \supset \Ac_{\kappa, \rho}(\Gamma).
\end{equation}
Note that $\pi_*\tLc_{\kappa, \rho} = \Lc_{\kappa, \rho}, \pi^*\Lc_{\kappa, \rho} = \tLc_{\kappa, \rho}$. 
When $\rho$ is trivial and $\kappa = (1, \dots, 1)$,
$\Lc = \Lc_{\kappa, \rho}$  is the determinant of the Hodge bundle and ample on $X^{\mathrm{BB}}$.
However, the extension $\tLc$ is trivial at the fiber of a cusp, and in general not ample on $\tX$. 
Nevertheless, we can use it along with twisting by $\E$ to prove the following result.

\begin{thm}
  \label{thm:vanish}
  In the notations above, for any $N \in \Nb$, 
   there exists $n_0, k \in \Nb_{> N}$ such that the following map
  \begin{equation}
    \label{eq:surj}
 H^0(\tX, \Ec^{0, 0} \otimes \tLc_{\kappa, \rho} \otimes  \tLc_{ \vv{k}}(- \E)^{\otimes n})  \stackrel{\bar \partial \otimes 1 \otimes 1}{\to}
 H^0(\tX, \Ec' \otimes \tLc_{\kappa, \rho} \otimes   \tLc_{\vv{k}}(-\E)^{\otimes n}  )
  \end{equation}
is surjective   for all $n \ge n_0$.
\end{thm}

\begin{proof}
  For simplicity, suppose $X^{\mathrm{BB}}$ has only one cusp $x$. 
  By Theorem 2.2 in Chapter IV of \cite{AMRT}), $\tX$ is the normalization of the blowing-up of $X^{\mathrm{BB}}$ at certain coherent sheaf $\mathcal{I}$ of ideals concentrated at $x$. 
  As $X^{\mathrm{BB}}$ is Noetherian, so is $\tX$ and $\pi$ is quasi-compact.
  We claim that $\tLc_{\vv{k}}(-\E) = \Oc_{\tX}(-\E) \otimes \pi^* \Lc^{\otimes k} $ is ample on $\tX$ for some $k > N$.

  Since normalization is a finite morphism in this case (see Lemma 33.27.1 in \cite[\href{https://stacks.math.columbia.edu/tag/0BXQ}{Tag 0BXQ}]{stacks}), it preserves ampleness and we can suppose that $\tX$ is the blowing-up of $X^{\mathrm{BB}}$. 
  By the discussion and Lemma 31.32.4(3) in \cite[\href{https://stacks.math.columbia.edu/tag/01OF}{Tag 01OF}]{stacks} and   Lemma 29.38.2 in  \cite[\href{https://stacks.math.columbia.edu/tag/01VL}{Tag 01VL}]{stacks}, we know that $\Oc_{\tX}(-\E) = \Oc_{\tX}(1)$ is $\pi$-relatively ample.
  Since $\Lc$ is ample on $X^{\mathrm{BB}}$, Lemma 29.37.7 in \cite[\href{https://stacks.math.columbia.edu/tag/01VG}{Tag 01VG}]{stacks} proves the claim.

  By considering the long exact sequence in cohomology associated to \eqref{eq:ses}, we see that the surjectivity of \eqref{eq:surj} is equivalent to the vanishing of $H^1(\tX, \tLc_{\kappa, \rho} \otimes \tLc_{\vv{k}}(- \E)^{\otimes n})$ for all $n$ sufficiently large. This follows from  standard vanishing result for cohomology (e.g. Lemma 30.17.1 in \cite[\href{https://stacks.math.columbia.edu/tag/01XO}{Tag 01XO}]{stacks}), which finishes the proof.
\end{proof}

\begin{prop}
  \label{prop:exist}
  For $\kappa \in \Zb^d$, let $\tka, \hka \in \Zb^d$ be as in \eqref{eq:hatkappa}.
  Suppose $f \in \Ac_{\tka, \rho}(\Gamma)$ is holomorphic in $\tau_j$ for $2 \le j \le d$. 
  Given any $g \in S_{\kappa'}(\Gamma)$ of parallel weight, there exists $\ell_0 \in \Nb$ and functions
$\hat G_\ell \in \Ac_{\hat\kappa + \ell \kappa', \rho}(\Gamma)$ for all $\ell \ge \ell_0$ such that they are holomorphic in $\tau_j$ for $2 \le j \le d$,
\begin{equation}
  \label{eq:L1pre}
L_{\tau_1}
 \hat{G}_\ell = g^\ell \cdot  f,
\end{equation}
and $\hat G_{\ell'} = g^{\ell' - \ell} \hat G_\ell$ for all $\ell' \ge \ell \ge \ell_0$.
\end{prop}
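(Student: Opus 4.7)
The plan is to reformulate the problem as a Dolbeault-type $\bar\partial$-equation on a toroidal compactification and invoke Theorem \ref{thm:vanish}. Since $L_{\tau_1}=-2iv_1^2\partial_{\bar\tau_1}$, the two conditions that $\hat G_\ell$ be holomorphic in $\tau_2,\dots,\tau_d$ and satisfy \eqref{eq:L1pre} are together equivalent to the single equation
\begin{equation*}
\bar\partial\hat G_\ell \;=\; \omega_\ell \;:=\; -\tfrac{1}{2iv_1^2}\, g^\ell f\, d\bar\tau_1,
\end{equation*}
viewed as a section of $\Ec^{0,1}\otimes\Lc_{\hat\kappa+\ell\kappa',\rho}$. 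First pass to a neat normal subgroup $\Gamma'\subset\Gamma$ of finite index; once a $\Gamma'$-equivariant solution is in hand, the $\Gamma$-equivariant one is recovered by averaging over $\Gamma/\Gamma'$ with the representation $\rho$. Next verify that $\bar\partial\omega_\ell=0$: since $g$ is holomorphic in every variable, $f$ is holomorphic in $\tau_j$ for $j\ge 2$, and $v_1^{-2}$ depends only on $\tau_1$, the only potentially surviving contributions $\partial_{\bar\tau_j}(g^\ell f v_1^{-2})\,d\bar\tau_j\wedge d\bar\tau_1$ for $j\ge 2$ all vanish.

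Fix a smooth toroidal compactification $X$ of $\Gamma'\backslash\Hb^d$ and choose an ample divisor $\D$ on $X$ whose line bundle realizes (a suitable multiple of) $\Lc_{\kappa'}$; such $\D$ exists because the parallel-weight automorphic line bundle on a Hilbert modular variety is pulled back from the ample tautological bundle on the Baily--Borel model and is big and nef on $X$, becoming ample after replacing $\kappa'$ by a fixed multiple (absorbed into $\ell_0$). The hypothesis that $v_1^{-2}f\,d\bar\tau_1$ is bounded near the cusps is exactly what ensures that $\omega_\ell$ extends to a smooth, $\bar\partial$-closed section of $\Ec^{0,1}\otimes\Lc_{\hat\kappa,\rho}\otimes\Oc(\ell\D)$ across the toroidal boundary (the factor $g^\ell$ extends holomorphically and kills any remaining boundary growth). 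Applying Theorem \ref{thm:vanish}, for every $\ell\ge\ell_0$ the tensored Dolbeault complex is exact, so $\omega_\ell$ is $\bar\partial$-exact and yields a global section $\hat G_\ell\in H^0(X,\Ec^{0,0}\otimes\Lc_{\hat\kappa+\ell\kappa',\rho})$ with $\bar\partial\hat G_\ell=\omega_\ell$. By \eqref{eq:H0X} this lies in $\Ac_{\hat\kappa+\ell\kappa',\rho}(\Gamma')$, is automatically holomorphic in $\tau_2,\dots,\tau_d$ (since $\omega_\ell$ has no $d\bar\tau_j$-component for $j\ge 2$), and satisfies $L_{\tau_1}\hat G_\ell=g^\ell f$.

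The compatibility relation is then imposed by hand: once $\hat G_{\ell_0}$ is constructed, define $\hat G_\ell:=g^{\ell-\ell_0}\hat G_{\ell_0}$ for all $\ell\ge\ell_0$. Holomorphicity of $g$ in every variable immediately gives $L_{\tau_1}(g^{\ell-\ell_0}\hat G_{\ell_0})=g^{\ell-\ell_0}\cdot g^{\ell_0}f=g^\ell f$ and preserves partial holomorphicity in $\tau_2,\dots,\tau_d$. The main obstacle is the middle step: matching the concrete weight-line bundle $\Lc_{\hat\kappa+\ell\kappa',\rho}$ to the abstract $\Lc_{\hat\kappa,\rho}\otimes\Oc(\ell\D)$ of Theorem \ref{thm:vanish} in a way compatible with the toroidal boundary and with the growth of $\omega_\ell$ there. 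This relies on the nefness/ampleness of parallel-weight automorphic bundles and on the boundedness hypothesis translating into regularity of $\omega_\ell$ on $X$; these are precisely the analytic-geometric inputs that generalize the elliptic argument of Bruinier--Funke \cite{BF04} to the Hilbert modular setting.
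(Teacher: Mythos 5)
Your strategy is the same as the paper's: reformulate $L_{\tau_1}\hat G_\ell=g^\ell f$ as a $\bar\partial$-equation on a toroidal compactification, check $\bar\partial$-closedness of the $(0,1)$-form, and invoke Theorem \ref{thm:vanish} to produce a primitive; then average over $\Gamma/\Gamma'$ to drop neatness. You also correctly impose the multiplicativity $\hat G_\ell=g^{\ell-\ell_0}\hat G_{\ell_0}$ to get all $\ell\ge\ell_0$ from one solution.

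The place where you deviate, and where a real gap appears, is precisely the step you yourself flag as ``the main obstacle.'' The paper resolves it with a small but essential observation: take $\D:=\div(g)$. Since $g$ is a non-zero section of $\Lc_{\kappa'}$, multiplication by $g^\ell$ gives a canonical isomorphism $\Oc(\ell\D)\cong\Lc_{\ell\kappa'}$, so $\Lc_{\hat\kappa,\rho}\otimes\Oc(\ell\D)$ is \emph{literally} $\Lc_{\hat\kappa+\ell\kappa',\rho}$; and the paper solves for $\hat f$ in $H^0(\Ec^{0,0}\otimes\Lc_{\hat\kappa,\rho}\otimes\Oc(\ell_0\D))$ (i.e.\ allowing poles along $\div(g)$), then sets $\hat G_\ell=g^\ell\hat f$, which is smooth. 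With this choice there is nothing to ``match'' and no need to contemplate an abstract ample $\D$. You instead keep $\D$ abstract and try to justify its existence by asserting that $\Lc_{\kappa'}$ is ``big and nef on $X$, becoming ample after replacing $\kappa'$ by a fixed multiple.'' That last claim is false as stated: bigness and nefness are preserved under positive multiples, but a nef and big line bundle that fails to be ample (e.g.\ one that contracts a boundary divisor) remains non-ample after taking any power. The paper simply asserts (with a reference to Freitag) that $\Lc_{\kappa'}$ is ample and then uses $\D=\div(g)$; your alternative route needs the ampleness anyway, and the ``multiple'' argument does not supply it. If you adopt $\D=\div(g)$ and appeal to the same ampleness input the paper does, your proof becomes a faithful reformulation of the paper's (with the primitive written as $g^{\ell_0}\hat f$ rather than $\hat f$, which is cosmetic). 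As written, though, the justification for the existence of the ample $\D$ is incorrect and the bundle-matching step is left genuinely unresolved.
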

\begin{proof}
  Suppose $\Gamma$ is neat and $g$ is non-zero. Let $\kappa' = \vv{k'}$ with $k' \in \Nb$ and fix some $N > k'$.
  Now choose   $n_0, k > N$ as in Theorem \ref{thm:vanish} and set
  $\ell_0 = n_0 k$.
  Given $f \in \Ac_{\tka, \rho}(\Gamma)$, the differential form $v_1^{-2}g(\tau)^{\ell_0} f(\tau) d \bar \tau_1$ is in the kernel of $\bar \partial \otimes 1 \otimes 1$ since $f$ is holomorphic in $\tau_2, \dots, \tau_d$.
Furthermore, it is orders of vanishing at the cusps are at least $\ell_0$ since $f$ is bounded near the cusps.
  Therefore, it is a global section of $\Ec' \otimes \tLc_{\hat \kappa, \rho} \otimes \tLc_{\kappa'}(- \E)^{\otimes \ell_0}$.
  Note that
  $$
  \tLc_{\kappa'}(- \E)^{\otimes \ell_0}
  =     \tLc_{\vv{k'k}}(- k\E)^{\otimes n_0}
  \subset \tLc_{\vv{k'k}}(- k'\E)^{\otimes n_0}
  =  \tLc_{\vv{k}}(- \E)^{\otimes n_0 k'}
  $$
since $k' < N < k$ and $\Oc_{\tX}(-k\E)$ is a subsheaf of $\Oc_{\tX}(-k'\E)$. 
By Theorem \ref{thm:vanish}, there exists $\hat G_{\ell_0} \in H^0(\tX, \Ec^{0, 0} \otimes \tLc_{\hka, \rho} \otimes \tLc_{\vv{k}}(-\E)^{n_0 k'})$ such that $\bar\partial  (-2i  \hat G_{\ell_0}) = v_1^{-2} g^{\ell_0} f d\bar \tau_1$.
As $f, g$ are real-analytic, so is $\hat G_{\ell_0}$. 
So for any $\ell \ge \ell_0$, the real-analytic modular form $\hat G_{\ell} := g^{\ell - \ell_0} \hat G_{\ell_0} \in \Ac_{\hat\kappa + \ell \kappa', \rho}$ 
is holomorphic in $\tau_2, \dots, \tau_d$ and satisfies
$$
L_{\tau_1} \hat G_\ell(\tau) =
-2i v_1^{2} \partial_{\bar \tau_1} \hat G_\ell(\tau)  =
-2i g(\tau)^{\ell - \ell_0} v_1^{2}\partial_{\bar \tau_1} \hat G_{\ell_0}(\tau) =
 g(\tau)^\ell f(\tau). 
$$
From the construction, the last condition is also satisfied.

Finally for any congruence subgroup $\Gamma \subset \Gamma_F$, there exists a neat, normal subgroup $\Gamma' \subset \Gamma$ of finite index. Averaging the function $\hat G_{\ell} \in \Ac_{\hat \kappa + \ell \kappa ', \rho}(\Gamma')$ constructed above over $\Gamma/\Gamma'$ then gives the desired function in level $\Gamma$.
\end{proof}

Now, we will apply this result to the case when $f$ is the special value of a theta kernel and $g$ is the holomorphic theta function for a positive definite lattice.

\begin{thm}
  \label{thm:L1pre}
  Let $W$ be an $F$-quadratic space of dimension $2$ with signature as in \eqref{eq:sign}, and $P_1, P_2 \in \Uc_F^+$ positive definite, $\Zb$-unimodular $\Oc$-lattices of ranks $2\rf$ and in the same genus. 
  For an $\Oc$-lattice $N \subset W$,
  there exists $\ell_0 \in \Nb$ and $\hat\delta(\tau) = \hat\delta(\tau; N, h,  P_1, P_2, \ell) \in \Ac_{\vv{ 1 + \ell \rf}, \rho_N}$   for all $\ell \ge \ell_0$ and $h \in H_W(\hat\Qb)$
  having the following properties.
  \begin{enumerate}
  \item 
    It is holomorphic in $\tau_2, \dots, \tau_d$ and has exponential decay near the cusps.
\item
  It satisfies
  \begin{equation}
    \label{eq:hTprop}
    \begin{split}
      L_{\tau_1} \hat\delta(\tau) &= (\theta_{P_2}(\tau) - \theta_{P_2}(\tau))^\ell \lp  \theta_N(\tau) -   \theta_N(\tau, h) \rp,
    \end{split}
  \end{equation}
for all $\tau \in \Hb^d, h \in H_W(\hat \Qb)$ and $\ell \ge \ell_0$.
\item
  We can write 
\begin{equation}
  \label{eq:holpart}
  \hat\delta(\tau) = \hat\delta^+(\tau) +
 (  \theta_{P_1}(\tau) - \theta_{P_2}(\tau))^\ell \lp \theta_N^*(\tau) - \theta_N^*(\tau, h)\rp,
\end{equation}
where $\theta^*_N(\tau, h)$ is defined in \eqref{eq:theta*}, and  $\hat\delta^+$ is holomorphic in $\tau$ and $\Gamma_\infty$-invariant with respect to $\rho_N$.

\item
  Given the Fourier expansions
  $$
  \hat\delta(\tau) = \sum_{m, \mu} \hat a_{m, \mu}(v_1)\ebf(\tr(m\tau))\phi_\mu,~ \hat\delta^+(\tau, h) = \sum_{m, \mu}  \hat a^+_{m, \mu}\ebf(\tr(m\tau))\phi_\mu,$$
  we have $\hat a_{m, \mu}^+ =0$ unless $m \gg 0$, and
\begin{equation}
  \label{eq:FClim}
  \lim_{v_1 \to \infty} \hat a_{m, \mu}(v_1) = \hat a^+_{m, \mu}
\end{equation}
for all $m \in F$ and $\mu \in L'/L$.
\item
  For any $r_1, \dots, r_d \in \Nb_0$, we have
  \begin{equation}
    \label{eq:diff}
\lim_{|v| \to \infty}    \partial_{\tau_1}^{r_1} \dots     \partial_{\tau_d}^{r_d} (\hat\delta - \hat\delta^+) = 0.
  \end{equation}
  \end{enumerate}
\end{thm}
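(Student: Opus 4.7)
The plan is to apply Proposition \ref{prop:exist} with $f = \theta_N(\tau) - \theta_N(\tau, h)$ and $g = \theta_P(\tau)$. First I verify the hypotheses. The theta kernel $\theta_N$ takes values in $S_N$ and has weight $\kappa = (-1, 1, \dots, 1)$, since $W_{\sigma_1}$ is negative definite of rank $2$ while $W_{\sigma_j}$ for $j \geq 2$ is positive definite of rank $2$; consequently $\theta_N(\tau)$ and $\theta_N(\tau, h)$ are both holomorphic in $\tau_j$ for $j \geq 2$. The $\lambda = 0$ contributions to $\theta_N$ and $\theta_N(\cdot,h)$ coincide (since $h^{-1}\cdot 0 = 0$), so $f$ is cuspidal with exponential decay at the boundary, making $v_1^{-2} f\, d\bar\tau_1$ bounded near the cusps. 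Since $\theta_P$ is a non-zero holomorphic Hilbert modular form of parallel weight $\rf$, Proposition \ref{prop:exist} produces $\ell_0 \in \Nb$ and functions $\hat G_\ell \in \Ac_{\hat\kappa + \ell\vv{\rf}, \rho_N}$ for $\ell \geq \ell_0$, holomorphic in $\tau_2, \dots, \tau_d$, satisfying $L_{\tau_1}\hat G_\ell = \theta_P^\ell f$ and $\hat G_{\ell'} = \theta_P^{\ell' - \ell}\hat G_\ell$. Setting $\hat\delta(\tau, h; N \oplus P^\ell) := \hat G_\ell(\tau)$ yields (1) and (2), noting that $\hat\kappa + \ell\vv{\rf} = \vv{1 + \ell\rf}$.

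For (3), I use \eqref{eq:LT*}, which gives $L_{\tau_1}\theta_N^*(\tau,h) = \theta_N(\tau,h)$. Define
\[
\hat\delta^+(\tau, h; N \oplus P^\ell) := \hat\delta(\tau, h; N \oplus P^\ell) - \theta_P(\tau)^\ell\bigl(\theta_N^*(\tau) - \theta_N^*(\tau, h)\bigr).
\]
Since $\theta_P$ is holomorphic, $L_{\tau_1}\hat\delta^+ = \theta_P^\ell f - \theta_P^\ell(\theta_N - \theta_N(\cdot,h)) = 0$; combined with the holomorphicity of every summand in $\tau_2, \dots, \tau_d$, $\hat\delta^+$ is holomorphic in all of $\tau$. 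The $\Gamma_\infty$-invariance under $\rho_N$ is inherited from the corresponding invariance of $\hat\delta$ and of the explicit series defining $\theta_P^\ell(\theta_N^* - \theta_N^*(\cdot,h))$, whose Fourier expansion is in $\ebf(\tr(m\tau))$-type exponentials with coefficients transforming by $\rho_N$.

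For (4) and (5), I would analyze the decay of $\hat\delta - \hat\delta^+ = \theta_P^\ell(\theta_N^* - \theta_N^*(\cdot,h))$ using the explicit formula \eqref{eq:theta*}. For $\lambda \neq 0$, the sign constraints $Q(\lambda)_1 < 0$ and $Q(\lambda)_j > 0$ ($j \geq 2$) combined with the asymptotic $\Gamma(n/2 - 1, x) \sim x^{n/2 - 2}e^{-x}$ make the typical term decay like $e^{2\pi Q(\lambda)_1 v_1}\prod_{j \geq 2}e^{-2\pi Q(\lambda)_j v_j}$, which vanishes as $|v| \to \infty$ along any direction; the $\lambda = 0$ constant term cancels in the difference. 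The same exponentials dominate any polynomial in $v$ pulled down by $\partial_{\tau_1}^{r_1}\cdots\partial_{\tau_d}^{r_d}$, yielding (5). For (4), term-wise Fourier comparison then gives $\lim_{v_1 \to \infty} \hat a_{m,\mu}(v_1) = \hat a^+_{m,\mu}$; on the other hand $\hat\delta$ has exponential decay at the cusps (inherited from $f$), so $\hat\delta^+$ is a holomorphic cusp form on the Hilbert modular variety, and Koecher's principle forces $\hat a^+_{m,\mu} = 0$ unless $m \gg 0$.

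The main technical obstacles are the verification of the cuspidal growth hypothesis in Proposition \ref{prop:exist} on a suitable toroidal compactification, and controlling the convergence of the decay estimates for (5) uniformly over the infinite sum defining $\theta_N^*$, particularly when $|v| \to \infty$ through non-generic directions. Both should be tractable using the standard growth bounds for theta series and the rapid decay of the incomplete Gamma factor.
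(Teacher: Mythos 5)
Your overall strategy is the same as the paper's: apply Proposition~\ref{prop:exist} to $f=\theta_N(\tau)-\theta_N(\tau,h)$ and $g=\theta_P$, then peel off $\theta_P^\ell(\theta_N^*-\theta_N^*(\cdot,h))$ to isolate a holomorphic part. However, there are three genuine gaps.

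First, your cuspidality argument for $f$ is too weak. Matching the $\lambda=0$ terms only controls the constant term at the cusp $\infty$; for a Hilbert modular group the cusp set is $\Gamma_F\backslash\Pb^1(F)$, and the constant terms of a theta series at the other cusps are Gauss-sum type expressions obtained from Poisson summation, not simply the $\lambda=0$ contribution. The paper instead invokes the Siegel--Weil formula (via Prop.~5.1 of \cite{BY11}) to conclude that $\theta_N(\tau)$ and $\theta_N(\tau,h)$ agree to leading order at \emph{every} cusp, hence the difference decays rapidly everywhere. You flagged this as a ``technical obstacle'' but it needs to be resolved, not deferred, since the hypothesis of Proposition~\ref{prop:exist} requires $v_1^{-2}f\,d\bar\tau_1$ to be bounded near the boundary of the compactification.

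Second, the claimed exponential decay of $\hat\delta$ at the cusps, ``inherited from $f$,'' is unjustified. Proposition~\ref{prop:exist} produces $\hat G_\ell\in\Ac_{\hat\kappa+\ell\kappa',\rho}$, and the space $\Ac$ only guarantees boundedness near the cusps, since the construction is a $\bar\partial$-preimage coming from cohomology, not an integral transform of $f$. Consequently $\hat\delta^+$ is a priori a bounded holomorphic form and Koecher's principle only gives $\hat a^+_{m,\mu}=0$ for $m\not\gg 0$ \emph{and} $m\neq 0$; the constant term may survive. The paper fixes this by subtracting a suitable holomorphic vector-valued Eisenstein series (which has vanishing image under $L_{\tau_1}$, so properties (2)--(3) are preserved) to kill the constant term of $\hat\delta^+$; only after this normalization do both the $m\gg 0$ conclusion in (4) and the exponential decay asserted in (1) actually hold.

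Third, you do not address the dependence of $\ell_0$ on $h$. Proposition~\ref{prop:exist} produces an $\ell_0$ depending a priori on the input data, and for the theorem one needs a single $\ell_0$ valid for all $h\in H_W(\hat\Qb)$. The paper observes that $\theta_N(\cdot,h)$ depends on $h$ only through the finite double coset $H_W(\Qb)\backslash H_W(\hat\Qb)/K$, so one may take the maximum of the finitely many $\ell_0$'s.
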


\begin{rmk}
  \label{rmk:choice}
  Up to holomorphic cusp forms of parallel weight $\rf \ell + 1$, the holomorphic part $\hat\delta^+$ is uniquely determined by the conditions above.
  We will show later in Theorem \ref{thm:algebraicity} that certain rational linear combinations of the Fourier coefficients of $\hat\delta^+$ are logarithms of algebraic numbers.
\end{rmk}
\begin{proof}
  By the Siegel-Weil formula, $\theta_N(\tau)$ and $\theta_N(\tau, h)$ have the same constant terms at all cusps, and their difference decays rapidly towards all cusps (see e.g.\ Proposition 5.1 in \cite{BY11}).
  By the same reason, $\theta_{P_1} - \theta_{P_2}$ is a holomorphic cusp form, and  we can apply Proposition \ref{prop:exist} above to $f(\tau) = \theta_N(\tau) - \theta_N(\tau, h) \in \Ac_{\kappa, \rho}$ and $g =\theta_{P_1} - \theta_{P_2} \in S_{\vv{\rf}}$.  Note that $\ell_0$  a priori depends on $h$. Since $H_W(\Qb) \backslash H_W(\hat \Qb)/K$ is finite for any open compact $K \subset H_W(\hat\Qb)$, there are only finitely many possible $h$ that give rise to different $\theta_N(\tau) - \theta_N(\tau, h)$, with possibly different $\ell_0$'s. By taking the maximum removes its dependence on $h$.

  Notice that $\hat\delta(\tau) - (\theta_{P_1}(\tau) - \theta_{P_2}(\tau))^\ell (\theta_N^*(\tau) - \theta_N^*(\tau, h))$ is annihilated by $L_{\tau_1}$ by \eqref{eq:LT*} and \eqref{eq:hTprop}. Therefore, it is holomorphic in $\tau_1, \dots, \tau_d$. This proves part (3).
  Equation \eqref{eq:theta*} implies that $\theta^*_N(\tau) - \theta^*_N(\tau, h)$ decays exponentially as $v_1 \to \infty$.
  Therefore, the same holds for $\hat\delta$ and the holomorphic part $\hat\delta^+$, whose Fourier coefficients are then supported only on totally positive indices. Equations \eqref{eq:FClim} and \eqref{eq:diff} now follows directly from \eqref{eq:theta*}.
\end{proof}

\subsection{ Whittaker Forms}
Suppose $d \ge 2$ for this section. 
We follow \cite{Br12, BY11} to recall Whittaker forms and their
regularized theta lifts.
For an even $\Oc$-lattice  $(L, Q)$  with signature as in \eqref{eq:sign} and $n > 2$, denote
\begin{equation}
  \label{eq:s0}
 s_0 := n/2,
\end{equation}
$\tka = \tka(V) = (s_0 - 1, s_0 + 1, \dots, s_0 + 1)$ as in \eqref{eq:kaV}
and $\kappa = (1 - s_0, s_0 + 1, \dots, s_0 + 1) \in \Zb^d$.
Given
$\mu \in L'/L$ and totally positive $m \in \df^{-1} + Q(\mu)$, the function
\begin{equation}
  \label{eq:fmm}
  f^{}_{m, \mu} (\vtau) := \frac{\Nm(4\pi m)^{s_0}}{(4\pi m_1)^{s_0} \Gamma(s_0)^d} \lp \Gamma(s_0) - \Gamma(s_0, 4\pi m_1 v_1) \rp
  e^{4\pi m_1 v_1} \ebf(-\tr(m \bar \vtau)) \phi_\mu, 
\end{equation}
is called a harmonic Whittaker form of weight $\kappa$ in the sense of \cite{Br12}.
The space generated by such forms is denoted by $H_{\kappa, \bar\rho_L}$.
Given $f = \sum_{\mu \in L'/L,~ m \gg 0} c(m, \mu) f_{m, \mu}  \in H_{\kappa, \bar\rho_L}$, the Fourier polynomial
\begin{equation}
  \label{eq:Pc}
  \Pc(f) := \sum_{\mu \in L'/L} \sum_{\substack{m \in \df^{-1} + Q(\mu)\\ m \gg 0}} c(m, \mu) q^{-m} \phi_\mu 
\end{equation}
is called its principal part.
It is invariant under $\Gamma_\infty := \{\smat{1}{b}{}{1} \in \SL_2(\Oc)\}$ with respect to $\bar\rho_L$. 
Conversely, given any polynomial of the above form, there is a unique harmonic Whittaker form $f_\Pc \in H_{\kappa, \bar\rho_L}$ with this principal part.
Note that such polynomial only depends on the finite quadratic modular $L'/L$. 
We say that $f$ or $\Pc(f)$ is \textit{rational} if the polynomial $\Pc(f)$ has rational coefficients.

Let
$\hka$ be the dual weight of $\kappa$ as in \eqref{eq:hatkappa}. 
There is a natural surjection $\xi = \xi^{(1)}_{\kappa}: H_{\kappa, \bar\rho_L} \to S_{\hka, \rho_L}$
\begin{equation}
  \label{eq:xi}
  \xi(f) := \sum_{\gamma \in \Gamma_\infty\backslash \Gamma_F}
  v_1^{-s_0-1} \overline{L_{\tau_1} f(\tau)} \mid_{\kappa, \rho_L} \gamma,
\end{equation}
where the sum above converges absolutely as $n > 2$.
This induces a bilinear pairing between $g = \sum_{n, \nu} b(n, \nu) q^n \phi_\nu \in M_{\hka, \rho_L}$ and $f \in H_{\kappa, \bar\rho_L}$ given by
\begin{equation}
  \label{eq:pairing}
  \{g, f\} = \{g, \Pc(f)\} := (g, \xi(f))_{\mathrm{Pet}} = \mathrm{CT}(\langle \Pc(f), g \rangle ) = \sum_{\mu \in L'/L,~ m \gg 0} c(m, \mu) b(m, \mu). 
\end{equation}
A harmonic Whittaker form $f$ is called \textit{weakly holomorphic} if $\xi(f)$ vanishes identically, i.e.\ $\{g, f\} = 0$ for all $g \in S_{\hka, \rho_L}$.
We use $M^!_{\kappa, \rho_L} \subset H_{\kappa, \rho_L}$ to denote the subspace of such forms.
Using the last expression in \eqref{eq:pairing}, we can extend $\{,\}$ to formal Fourier series in the parameter $\ebf(\tr(mu))$. 
For a subfield $\Fc \subset \Cb$, let $M^!_{\kappa, \bar\rho_L}(\Fc)$ denote the subspace of $M^!_{\kappa, \bar\rho_L}$  with Fourier coefficients in $\Fc$. 
Then the fact that $M_{\hka, \rho_L}(\Fc) = M_{\hka, \rho_L}(\Qb) \otimes \Fc$ implies that
\begin{equation}
  \label{eq:Qbasis}
  M^!_{\kappa, \bar\rho_L}(\Fc) =   M^!_{\kappa, \bar\rho_L}(\Qb) \otimes \Fc.
\end{equation}
\subsection{Regularized Theta Lifts}
For each $f = f_\Pc = \sum_{m, \mu} c(m, \mu) f_{m, \mu} \in H_{\kappa, \bar\rho_L}$, Bruinier computed its regularized theta lift in \cite{Br12} and constructed an Arakelov Green function $\Phi(z, h, f)$ for the divisor
\begin{equation}
  \label{eq:Zf}
Z(f) = Z(\Pc) := \sum_{\mu \in L'/L,~ m \gg 0} c(m, \mu) Z(m, \phi_\mu)  
\end{equation}
on the Shimura variety $X_K$.
When $s_0 = n/2 > 1$, Corollary 5.3 of \cite{BY11} expressed this as
\begin{equation}
  \label{eq:Phi}
  \begin{split}
      \Phi_L(z, h,f) &=
      \frac{1}{\sqrt{D}} \int^{\mathrm{reg}}_{\Gamma_\infty \backslash \Hb^d} \langle f(\tau),
      \overline{      \Theta_L(\tau, z, h) - E_L(\tau, \tilde\kappa)} \rangle (v_2\dots v_d)^{s_0+1} 
      d\mu(\tau)\\
      &\quad + B(f) (\Gamma'(1) + s_0^{-1})      ,
  \end{split}
\end{equation}
where $\Theta_L\in \Ac_{\tilde\kappa, \rho_L}$ with the weight $\tka$ given \eqref{eq:hatkappa}, $B(f) = B(\Pc) := \{E_L(\tau, \tka), \Pc\} \in \Cb$ and
\begin{equation}
  \label{eq:reg}
   \int^{\mathrm{reg}}_{\Gamma_\infty \backslash \Hb^d} G(\tau) d\mu(\tau)
:=
 \int_{v \in (\Rb_{> 0})^d} \lp \int_{u \in \Oc \backslash \Rb^d} G(\tau) du \rp \frac{dv}{\Nm(v)^2}
\end{equation}
for any $\Gamma_\infty$-invariant function $G$ on $\Hb^d$ such that the integral converges.

The integral in  \eqref{eq:Phi} converges for $s_0 = n/2 > 1$.
(see Proposition 5.2 of \cite{BY11}).
 Furthermore, Equation \eqref{eq:HactTheta} implies that
 \begin{equation}
   \label{eq:Phih}
   \Phi_L(z, h (h')^{-1}, f) =    \Phi_{L_{h'}}( z,  h, \iota_{h'} \circ f)    
 \end{equation}
 for any $h, h' \in H(\hat\Qb)$, where the isomorphism $\iota_{h'}: S_L \to S_{L_{h'}}$ is defined in section \ref{subsec:unilat}.
 
 By Theorem 5.14 of \cite{Br12}, for any $(z_0, h_0) \in \Db \times H(\hat \Qb)$, the function
 \begin{equation}
   \label{eq:Phi2}
   \Phi_L(z, h, f) + \sum_{m, \mu} c(m, \mu) \sum_{\lambda \in h_0(L_{m, \mu}) \cap z_0^\perp} \log | Q(\lambda_z)|
 \end{equation}
 is real-analytic in a neighborhood of $(z_0, h_0)$.
 By inspecting its proof, we have the following consequence.
 \begin{lemma}
   \label{lemma:Phidiff}
   Suppose $n > 2$ and $h_0, h_0' \in H(\hat \Qb)$ satisfy
   \footnote{
     Via the diagonal embedding, we tacitly view elements in $V(\Qb)$ as in $V(\hat\Qb)$. The set of $z_0$ in $\Db$ such that $V \cap z_0^\perp$ is non-trivial has measure 0.} 
 \begin{equation}
   \label{eq:h0cond}
   h_0\mid_{V \cap z_0^\perp} = h_0'\mid_{V \cap z_0^\perp}.
 \end{equation}
 Then $\Phi_L(z, h_0, f) -\Phi_L(z, h_0', f)$ is real-analytic in an open neighborhood
 of $z_0 \in \Db$, where it is given by
 \begin{equation}
   \label{eq:Phidiff}
   \Phi_L(z, h_0, f) -    \Phi_L(z, h_0', f) =
   \frac{1}{\sqrt{D}}
   \int_{\Gamma_\infty \backslash \Hb^d}^{\mathrm{reg}} \langle f(\tau),
   \overline{\Theta_L(\tau, z, h_0)  - \Theta_L(\tau, z, h_0')}
     \rangle (v_2\dots v_d)^{s_0+1} d\mu(\tau)
 \end{equation}
 In particular, the integral above converges at $z = z_0$.
 \end{lemma}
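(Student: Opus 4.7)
I would subtract the integral representation \eqref{eq:Phi} written for $h = h_0$ from the one for $h = h_0'$: every $h$-independent summand cancels, and what survives is \eqref{eq:Phidiff}. One then uses Bruinier's singularity analysis (Theorem 5.14 of \cite{Br12}) to verify both the real-analyticity of the left-hand side and the convergence of the right-hand side at $z = z_0$.

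\textbf{Matching the singular part.} The hypothesis that $h_0$ and $h_0'$ stabilize $z_0^\perp$ and coincide on it implies $h_0^{-1}\lambda = (h_0')^{-1}\lambda$ in $V(\hat F)$ for any $\lambda \in V(F) \cap z_0^\perp$, so
\[
h_0(L_{m,\mu}) \cap z_0^\perp = h_0'(L_{m,\mu}) \cap z_0^\perp \qquad \text{for all } m, \mu.
\]
Substituting this into \eqref{eq:Phi2} at $(z_0, h_0)$ and $(z_0, h_0')$ and subtracting, the logarithmic terms $-\log|Q(\lambda_z)|$ cancel term by term, so $\Phi_L(z,h_0,f) - \Phi_L(z,h_0',f)$ extends to a real-analytic function in a neighbourhood of $z_0$. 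Separately, the $h$-independent constant $B(f)(\Gamma'(1) + s_0^{-1})$ and the Eisenstein piece $E_L(\tau,\tilde\kappa)$ in \eqref{eq:Phi} cancel upon subtraction, and for any $z \notin \supp(Z(f))$ the resulting integral converges absolutely (since $\Theta_L(\tau,z,h) - E_L(\tau,\tilde\kappa)$ decays exponentially at every cusp by Siegel--Weil), giving \eqref{eq:Phidiff} away from $z_0$.

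\textbf{Convergence at $z_0$ and main obstacle.} What remains is to show that the regularised integral on the right of \eqref{eq:Phidiff} still converges at $z = z_0$. Tracing through the proof of Theorem 5.14 of \cite{Br12}, the only source of divergence of each single theta integral as $z \to z_0$ is a finite collection of Fourier modes in $\tau$, indexed precisely by the lattice vectors in $h(L_{m,\mu}) \cap z_0^\perp$, each mode producing the $-\log|Q(\lambda_z)|$ singularity of \eqref{eq:Phi2}. By the matching step these modes cancel pointwise in $\tau$ inside $\overline{\Theta_L(\tau,z,h_0) - \Theta_L(\tau,z,h_0')}$; the remaining contributions are uniformly bounded on a compact neighbourhood of $z_0$, and dominated convergence yields \eqref{eq:Phidiff} at $z = z_0$. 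The main technical obstacle is thus a careful reading of Bruinier's argument: one must verify that his singularity expansion decomposes \emph{locally in $\tau$} into a sum indexed by $h(L_{m,\mu}) \cap z_0^\perp$, so that equality of these sets for $h_0$ and $h_0'$ produces cancellation in the integrand itself rather than only at the level of the total integral.
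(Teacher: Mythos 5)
Your proposal is correct and follows essentially the same path as the paper's proof: reduce to $f = f_{m,\mu}$, use the hypothesis to show $h_0(L_{m,\mu}) \cap z_0^\perp = h_0'(L_{m,\mu}) \cap z_0^\perp$, conclude real-analyticity of the difference from the cancellation of the logarithmic singularities in \eqref{eq:Phi2}, cancel the $E_L$ and $B(f)$ terms in \eqref{eq:Phi}, and observe that the problematic lattice vectors contribute identically to both theta kernels so the integrand itself is regular at $z_0$. The paper formalizes your final "Fourier modes cancel pointwise" step by explicitly introducing the truncated kernel $\Theta_L'(\tau,z,h) := \Theta_L(\tau,z,h) - \sum_{\lambda \in h(L_{m,\mu}) \cap z^\perp}\phi_\mu(h^{-1}\lambda)\phi_\infty(\tau,z,\lambda)\phi_\mu$ and showing the two integrands agree for $z \neq z_0$ via the unfolding in the proof of Theorem 5.3 of \cite{Br12}, which is the precise version of the cancellation you describe.
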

 \begin{proof}
   Without loss of generality, we can suppose $f = f_{m, \mu}$.
      From \eqref{eq:h0cond}, we have
   $$
   h_0(L_{m, \mu}) \cap z_0^\perp =  h_0(L_{m, \mu} \cap z_0^\perp) = h_0'(L_{m, \mu} \cap z_0^\perp) = h_0'(L_{m, \mu}) \cap z_0^\perp.
   $$
   So the singularities of $\Phi_L(z, h, f)$ and $\Phi_L(z, h', f)$ agree for $(z, h)$ and $(z, h')$ in open neighborhoods  $\Nc_1 \times \Nc_2$ and $\Nc_1' \times \Nc_2'$   of $(z_0, h_0)$ and $(z_0, h_0')$ respectively.
   Then $\Phi_L(z, h_0, f) - \Phi_L(z, h_0', f)$ is real-analytic in the open neighborhood $\Nc := \Nc_1 \cap \Nc_2 \subset \Db$ of $z_0$, which could be made to satisfy $h_0(L_{m, \mu}) \cap z^\perp = \{0\}$ for all $z \in \Nc\backslash\{z_0\}$. 

   Then convergence of the integral at $z \not\in Z(m, \mu)$ follows directly from Proposition 5.2 of \cite{BY11}. For such $z$, we can apply the unfolding calculation in the proof of Theorem 5.3 of \cite{Br12} while evaluating at $s = s_0$. Then the following integrals are identically equal for any $z \in \Nc \backslash \{z_0\}$
   \begin{align*}
     &\int_{u \in \Oc \backslash \Rb^d}
     \langle f_{m, \mu}(\tau), \overline{\Theta_L(\tau, z, h_0)  - \Theta_L(\tau, z, h_0')}  \rangle (v_2\dots v_d)^{s_0+1} du     \\
     =&
     \int_{u \in \Oc \backslash \Rb^d}
     \langle f_{m, \mu}(\tau), \overline{\Theta_L'(\tau, z, h_0)  - \Theta_L'(\tau, z, h_0')}  \rangle (v_2\dots v_d)^{s_0+1} du     ,
   \end{align*}
   where $\Theta_L'(\tau, z, h) := \Theta_L(\tau, z, h) - \sum_{\lambda \in h(L_{m, \mu}) \cap z^\perp} \phi_\mu(h^{-1} \lambda) \phi_\infty(\tau, z, \lambda) \phi_\mu$.
From the second expression, we see that the integral in \eqref{eq:Phidiff} also converges for $z = z_0$. 
 \end{proof}

 To evaluate the regularized theta integral, one can apply Stokes' theorem when certain primitive exist.
 The following lemma distilled from Theorems 6.3 and 7.2 in \cite{BY11} will be helpful for this purpose.

\begin{lemma}
  \label{lemma:stokes}
  Suppose $n = 2s_0 > 2$ and   $f = \sum_{m \gg 0,~ \mu \in L'/L} c(m, \mu) f_{m, \mu}$ is a harmonic Whittaker form in $H_{\kappa, \bar\rho_L}$. 
  Let $\eta \in \Ac_{\tka, \rho_L}$ such that $   \int_{\Gamma_\infty \backslash \Hb^d}^{\mathrm{reg}} \langle f(\tau), \overline{\eta(\tau)}  \rangle (v_2\dots v_d)^{s_0+1} d\mu(\tau)      $ converges.
  Suppose there exists
  $$
  \hat\eta(\tau) = \sum_{m \in F,~ \mu \in L'/L} \hat b_{m, \mu}(v_1) \ebf(\tr(m \tau)) \phi_\mu \in \Ac_{\hka, \rho_L}
  $$
  such that  $  L_{\tau_1} \hat\eta(\tau) = -2iv_1^2 \partial_{\bar\tau_1}\hat\eta(\tau) =\eta(\tau) $ and the $\Gamma_F$-invariant function $  \|\hat \eta(\tau) \|_{\mathrm{Pet}}$ is bounded on $\Hb^d$.
  Then 
  \begin{equation}
    \label{eq:stokes}
    \int_{\Gamma_\infty \backslash \Hb^d}^{\mathrm{reg}} \langle f(\tau), \overline{\eta(\tau)}  \rangle (v_2\dots v_d)^{s_0+1} d\mu(\tau)    =
  -(\hat\eta, \xi(f))_{\mathrm{Pet}} +     \lim_{T \to \infty}
    \sum_{m, \mu} c(m, \mu) \hat b_{m, \mu}(T).
  \end{equation}
\end{lemma}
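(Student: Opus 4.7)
The plan is a Leibniz split of the integrand followed by unfolding on one piece and a Fourier-based Stokes' calculation on the other. Since $\delta=L_{\tau_1}\hat\delta$ and the Hermitian bracket absorbs the bar in $\overline\delta$, one has $\langle f,\overline\delta\rangle=\sum_\mu f_\mu\,L_{\tau_1}\hat\delta_\mu$. Because $L_{\tau_1}=-2iv_1^2\partial_{\bar\tau_1}$ is a scalar first-order operator, the Leibniz rule gives
\[
\sum_\mu f_\mu L_{\tau_1}\hat\delta_\mu \;=\; L_{\tau_1}\!\Bigl(\sum_\mu f_\mu\hat\delta_\mu\Bigr) \;-\; \sum_\mu (L_{\tau_1}f_\mu)\,\hat\delta_\mu,
\]
and I integrate each piece against $(v_2\cdots v_d)^{s_0+1}\,d\mu$, expecting them to match the two terms on the right of \eqref{eq:stokes}.

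The second piece produces $-(\hat\delta,\xi(f))_{\mathrm{Pet}}$. I would recognize $v_1^{-s_0-1}\overline{L_{\tau_1}f}$ as the $\gamma=1$ contribution in the defining sum \eqref{eq:xi} of $\xi(f)$, rewrite $(v_2\cdots v_d)^{s_0+1}=v^\kappa/v_1^{s_0+1}$, and unfold $\sum_{\gamma\in\Gamma_\infty\backslash\Gamma_F}$ to convert the $\Gamma_\infty$-integral into the $\Gamma_F$-integral defining the Petersson pairing in \eqref{eq:Pet}.

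For the first piece, set $G:=\sum_\mu f_\mu\hat\delta_\mu$. The $\partial_{u_1}$-half of $L_{\tau_1}G$ integrates to zero over the compact $u$-torus $\Oc\backslash\Rb^d$, so $\int_u L_{\tau_1}G\,du=v_1^2\partial_{v_1}H$ with $H(v):=\int_u G\,du$. Using the Fourier expansions of $f$ from \eqref{eq:fmm} and of $\hat\delta$ in the lemma statement (whose coefficient shape forces $\hat\delta$ to be holomorphic in $\tau_2,\dots,\tau_d$), Fourier orthogonality in $u$ picks out matching $m$ and gives
\[
H(v) \;=\; \sqrt{|D|}\sum_{m\gg 0,\,\mu} c(m,\mu)\,B(m,v_1)\,\hat a_{m,\mu}(v_1)\,e^{-4\pi\sum_{j\ge 2}m_jv_j},
\]
where $B(m,v_1)$ is the $v_1$-prefactor of $f_{m,\mu}$ containing $\Gamma(s_0)-\Gamma(s_0,4\pi m_1 v_1)$. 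The Gamma integrals $\int_0^\infty e^{-4\pi m_jv_j}v_j^{s_0-1}\,dv_j=\Gamma(s_0)/(4\pi m_j)^{s_0}$ for $j\ge 2$ then cancel the $\Nm(m)^{s_0}$-prefactor of $f_{m,\mu}$ exactly, leaving a $v_1$-integral of a total derivative of $(1-\Gamma(s_0,4\pi m_1v_1)/\Gamma(s_0))\,\hat a_{m,\mu}(v_1)$. The lower boundary vanishes since $\Gamma(s_0)-\Gamma(s_0,0)=0$ kills $B(m,0)$; the upper boundary, together with $\Gamma(s_0,4\pi m_1 T)\to 0$, produces $\lim_{T\to\infty}\sum c(m,\mu)\hat a_{m,\mu}(T)$.

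The main technical obstacle is justifying the interchange of the $T\to\infty$ limit with the sum over $m$ in the upper boundary, together with Fubini in $(v_1,v_2,\dots,v_d)$. Here the hypothesis $\|\hat\delta\|_{\mathrm{Pet}}$ bounded on $\Hb^d$ yields Parseval-type uniform estimates on $\hat a_{m,\mu}(v_1)$, and a harmonic Whittaker form $f$ has only finitely many nonzero $c(m,\mu)$ by \eqref{eq:Pc}; these should suffice for absolute convergence. Convergence of the $(\hat\delta,\xi(f))_{\mathrm{Pet}}$ piece is automatic since $\xi(f)$ is a cusp form and $\hat\delta$ has bounded Pet norm, while convergence of the left-hand side of \eqref{eq:stokes} is part of the hypothesis, so the Leibniz split preserves convergence term-by-term.
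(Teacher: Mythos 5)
Your Leibniz split, unfolding, and total-derivative bookkeeping reproduce the paper's proof: the paper simply writes the identity $\partial_{v_1}\hat a_{m,\mu}=v_1^{-2}a_{m,\mu}$ and integrates by parts in $v_1$, which after unwinding is exactly your decomposition into an $L_{\tau_1}G$ piece and an $\hat\delta\cdot L_{\tau_1}f$ piece; the unfolding that recovers $-(\hat\delta,\xi(f))_{\mathrm{Pet}}$ and the upper-boundary limit match too. The one place you gloss is the lower boundary: it is not enough that $\Gamma(s_0)-\Gamma(s_0,0)=0$, because $\hat a_{m,\mu}(T^{-1})$ may blow up as $T\to\infty$; the paper uses the Petersson bound to get $\hat a_{m,\mu}(t)=O(t^{(-s_0-1)/2})$ as $t\to 0$, pairs it with $\Gamma(s_0)-\Gamma(s_0,4\pi m_1 t)=O(t^{s_0})$, and concludes that the product is $O(t^{(s_0-1)/2})\to 0$ precisely because $s_0>1$ --- this is where the standing hypothesis $n=2s_0>2$ is actually used, and your write-up never invokes it. Add that two-line estimate to the lower-boundary step and the argument is complete.
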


\begin{rmk}
  \label{rmk:limit}
  Since $\xi(f)$ is a cusp form, the integral defining the Petersson inner product exists, and so does the limit in $T$.
\end{rmk}
  \begin{proof}
    We will give the proof for $f = f_{m, \mu}$. The general case can be proved the same way.
    Note that $m$ is totally positive.
By applying $L_{\tau_1}$ to the Fourier expansion of $\hat\eta$, we can write
$$
\eta(\tau) = \sum_{m \in F,~ \mu \in L'/L}  b_{m, \mu}(v_1) \ebf(\tr(m \tau)) \phi_\mu,~
 \partial_{v_1} \hat b_{m, \mu}(v_1) = v_1^{-2} b_{m, \mu}(v_1).
$$
Substituting this into the left hand side of \eqref{eq:stokes} gives us
\begin{align*}
  \frac{1}{\sqrt{D}}  \int_{\Gamma_\infty \backslash \Hb^d}^{\mathrm{reg}}
  \langle f(\tau), \overline{\eta(\tau)}  \rangle (v_2\dots v_d)^{s_0+1} d\mu(\tau)
  & = \lim_{T \to \infty} \int^T_{T^{-1}} b_{m, \mu}(v_1) \frac{\Gamma(s_0) - \Gamma(s_0, 4\pi m_1 v_1)}{\Gamma(s_0)} \frac{dv_1}{v_1^2}.
\end{align*}
Using integration by parts, we can rewrite the integral as
\begin{align*}
  \int^T_{T^{-1}} b_{m, \mu}(v_1)
  & \frac{\Gamma(s_0) - \Gamma(s_0, 4\pi m_1 v_1)}{\Gamma(s_0)} \frac{dv_1}{v_1^2}
    =
-      \frac{(4\pi m_1)^{s_0}}{\Gamma(s_0)}
  \int^T_{T^{-1}} \hat b_{m, \mu}(v_1) v_1^{s_0 - 1}  e^{-4\pi m_1 v_1} dv_1\\    
&+     \hat b_{m, \mu}(T)  \frac{\Gamma(s_0) - \Gamma(s_0, 4\pi m_1 T)}{\Gamma(s_0)} 
    -     \hat b_{m, \mu}(T^{-1})  \frac{\Gamma(s_0) - \Gamma(s_0, 4\pi m_1 T^{-1})}{\Gamma(s_0)}
\end{align*}
As $T \to \infty$, the first term on the right hand side becomes $- (\hat\eta, \xi(f))_{\mathrm{Pet}}$ after unfolding, and the second term becomes $\lim_{T \to \infty} \hat b_{m, \mu}(T)$.
For the third term, the boundedness of $\langle \hat\eta, \overline{\hat\eta} \rangle \Nm(v)^{s_0+1}$ implies that
$$
\hat b_{m, \mu}(t) = O(t^{(-s_0-1)/2})\text{ as }t \to 0,
$$
while $\Gamma(s_0) - \Gamma(s_0, 4\pi m_1 t) = O(t^{s_0})$ as $t \to 0$. Since $s_0 > 1$, the third term vanishes as $T \to \infty$. This finishes the proof.
  \end{proof}

\subsection{Algebraicity Result}
Fix a quadratic space $W$ over $F$ having signature as in \eqref{eq:sign} with $n = 0$. 
Let $N \subset W$ be an even $\Oc$-lattice, $K_N \subset H_W(\hat \Qb) $ an open compact fixing $\hat N$ and acting trivially on $\hat N' / \hat N = S_N$. 
Now, let $\tP, P_1, P_2 \in \Uc_F^+$ be lattices such that $P_1, P_2$ are in the same genus.
Denote their ranks by $2\trf$ and $2\rf$ with $\trf, \rf \in \Nb$. 
Given integers $\ell \in \Nb$ and $0 \le i \le \ell$, we write
\begin{equation}
  \label{eq:tU}
  \begin{split}
    L_{i} &:= N \oplus \tP \oplus P_1^i \oplus P_2^{\ell - i},\\
    \tU &:= \tP \otimes F,~ U := P_1^\ell \otimes F = P_2^\ell \otimes F,\\
V &:= L_i \otimes F = \tU \oplus U \oplus W.
  \end{split}
\end{equation}
Note that $\rho_{L_{i}} = \rho_N$ since $\tP, P_1, P_2$ are $\Zb$-unimodular.
For $h_W \in H_W(\hat \Qb) \subset H_V(\hat \Qb)$, we have
\begin{equation}
  \label{eq:Thetasplit}
\Theta_{L_{i}}(\tau, z_0, h_W) =  \theta_N(\tau, h_W) \theta_{\tP}(\tau)\theta_{P_1}(\tau)^i \theta_{P_2}(\tau)^{\ell - i} \in \Ac_{\vv{\trf + \rf \ell}  + (-1, 1, \dots, 1), \rho_N}  
\end{equation}
for a CM point $(z_0, h_W) \in Z(W, 1) \subset X_{V, K}$ defined over $E_W^{\mathrm{ab}}$ for a CM field $E_W/F$, 
and $K \subset H_V(\Ab)$ an open compact fixing $L_{i}$ for all $0 \le i \le \ell$.
Note that
$z_0$ depends only on the rational splitting $V = \tU \oplus U \oplus W$.

Theorem \ref{thm:L1pre} shows that for all $\ell \in \Nb$ sufficiently large, there exists $\hat\delta(\tau; N, h_W, P_1, P_2, \ell) \in\Ac_{\vv{\rf\ell  + 1}, \rho_N} $ for every $h_W \in  H_W (\hat\Qb)$
such that
\begin{equation}
  \label{eq:L-hat-delta}
L_{\tau_1} \hat\delta(\tau)
= (\theta_{P_1}(\tau) - \theta_{P_2}(\tau))^\ell (\theta_N(\tau) - \theta_N(\tau, h_W))
\end{equation}
and $L_{\tau_j} \hat\delta = 0$ for $2 \le j \le d$.
Recall that $\hat\delta^+(\tau) $ is the holomorphic part of $\hat\delta(\tau)$ as in \eqref{eq:holpart}.
We can now apply Lemmas \ref{lemma:Phidiff} and \ref{lemma:stokes} to the function
\begin{equation}
  \label{eq:etaP}
  \hat\eta_{\tP}(\tau) 
  := \theta_{\tP}(\tau)\cdot \hat\delta(\tau),~ 
  \hat\eta_{\tP}^+(\tau) :=\theta_{\tP}(\tau)\cdot \hat\delta^+(\tau),
\end{equation}
and express linear combinations of the Fourier coefficients of $\hat\eta_{\tP}^+$ in terms of special values of the regularized theta lift $\Phi_{L_{i}}(z, h, f)$ for any harmonic Whittaker form $f \in H_{\kappa, \bar\rho_N}$.

\begin{prop}
  \label{prop:pairing}
  Let $L_{i}, h_W, \hat\delta, \hat\eta_{\tP}$ be as above.
  Then
  \begin{equation}
    \label{eq:pair2}
    \sum_{0 \le i \le \ell} (-1)^{\ell - i} \binom{\ell}{i}
\lp    \Phi_{L_{i}}(z_0, 1, f) -     \Phi_{L_{i}}(z_0, h_W, f) \rp
    = -(\hat\eta_{\tP}, \xi(f))_{\mathrm{Pet}} + \{\hat\eta_{\tP}^+, f\}
  \end{equation}
    for any harmonic Whittaker form $f \in H_{\kappa, \bar\rho_N}$ with $\kappa = \widetilde{\tka(V)}$.
\end{prop}

\begin{proof}
  It is easy to see that $h_0 = 1, h_0' = h_W$ satisfy the condition \eqref{eq:h0cond}.
  Since $\hat\delta$ has exponential decay near the cusps, the function $\|\hat\eta_{\tP}(\tau)\|_{\mathrm{Pet}}$ is bounded on $\Hb^d$.
  Furthermore, equation \eqref{eq:L-hat-delta} and the holomorphicity of $\theta_{\tP}$ implies that
  $$
  L_{\tau_1} \hat\eta_{\tP}(\tau) =
  L_{\tau_1}( \theta_{\tP}(\tau) \hat\delta_{}(\tau)) =
  \theta_{\tP}(\tau)   L_{\tau_1}( \hat\delta_{}(\tau)) =
  \theta_{\tP}(\tau)  (\theta_{P_1}(\tau) - \theta_{P_2}(\tau))^\ell (\theta_N(\tau) - \theta_N(\tau, h_W)).
  $$
  If we denote $\hat b_{m, \mu}(v_1)$ and $\hat b_{m, \mu}^+$ the Fourier coefficients of $\hat\eta_{\tP}$ and $\hat\eta_{\tP}^+$ respectively, then we have
  $$
\lim_{v_1 \to \infty} \hat b_{m, \mu}(v_1) = \hat b_{m, \mu}^+. 
$$
by Theorem \ref{thm:L1pre}.
  Therefore Lemmas \ref{lemma:Phidiff} and \ref{lemma:stokes} together with \eqref{eq:Thetasplit} imply
  \begin{align*}
    \sum_{0 \le i \le \ell}& (-1)^{\ell - i} \binom{\ell}{i}
    \lp\Phi_{L_i}(z_0, 1, f) -
     \Phi_{L_i}(z_0, h_W, f) \rp\\
&    =
      \int_{\Gamma_\infty \backslash \Hb^d}^{\mathrm{reg}}
    \sum_{0 \le i \le \ell} (-1)^{\ell - i} \binom{\ell}{i}
  \langle f(\tau), \overline{\Theta_{L_i}(\tau, z_0, 1)  - \Theta_{L_i}(\tau, z_0, h_W)}  \rangle (v_2\dots v_d)^{s_0+1} d\mu(\tau)\\
&    =
      \int_{\Gamma_\infty \backslash \Hb^d}^{\mathrm{reg}}
  \langle f(\tau), \overline{(\theta_{N}(\tau) - \theta_{N}(\tau, h_W)) \theta_{\tP}(\tau) ( \theta_{P_1}(\tau) - \theta_{P_2}(\tau))^\ell}  \rangle (v_2\dots v_d)^{s_0+1} d\mu(\tau)\\
    &= -(\hat\eta_{\tP}, \xi(f))_{\mathrm{Pet}} +
      \lim_{T \to \infty} \sum_{m, \mu} c(m, \mu) \hat a_{m, \mu}(T)
      = -(\hat\eta_{\tP}, \xi(f))_{\mathrm{Pet}} +       \{\hat\eta_{\tP}^+, f\} .
  \end{align*}
  This finishes the proof.
\end{proof}

\begin{rmk}
  \label{rmk:Ph}
  If we replace $\tP$ by $\tP_{\th}$ with $\th \in H_{\tU}(\hat\Qb)$, then $\tP_{\th}$ is in the same genus as $\tP$ and $N \oplus  \tP_{\th} \oplus P_1^i \oplus P_2^{\ell - i} = (\th^{-1} L_i) \cap V = L_{i, \th}$ with $\th \in H_{\tU}(\hat\Qb) \subset H_V(\hat\Qb)$. 
  Furthermore, $\iota_{\th} \circ f = f$ since $\tP$ is $\Zb$-unimodular.
  The proposition above and Equation \eqref{eq:HactTheta} imply that the function $\hat\eta_{\tP_{\th}}$ satisfies
    \begin{equation}
      \label{eq:pair2'}
    \sum_{0 \le i \le \ell} (-1)^{\ell - i} \binom{\ell}{i}
\lp    \Phi_{L_{i}}(z_0, (1, \th^{-1}), f) -     \Phi_{L_{i}}(z_0, (h_W, \th^{-1}), f) \rp
    = -(\hat\eta_{\tP_{\th}}, \xi(f))_{\mathrm{Pet}} + \{\hat\eta_{\tP_{\th}}^+, f\}
  \end{equation}
for all $\th \in H_{\tU}(\hat\Qb)$.
\end{rmk}
  
If $c(m, \mu) \in \Zb$ for all $m $ and $\mu$, then Theorem 6.8 of \cite{Br12} implies that there exists a meromorphic modular form $\Psi_L(z, h, f)$ on $X_K$ with weight $-B(f)$ and a finite order multiplier system such that
\begin{equation}
  \label{eq:Psif}
- \log \|\Psi_L(z, h, f)\|^2_{\mathrm{Pet}} = \Phi_L(z, h, f).   
\end{equation}
Furthermore, the divisor of $\Psi_L(z, h, f)$ is the special cycle $Z(f)$, which is defined over $F$.
Let $M \in \Nb$ be the order of the multiplier system.
Then up to a locally constant function $C_{L}(z, h, f)$ on $X_K$, 
the form $\Psi_L(z, h, f)^M$ equals to a meromorphic modular form $R_{L}(z, h, f)$ of weight $-B(M \cdot f) \in \Zb$ on $X_K$ defined over $\sigma_1(F)$, and we can write
 \begin{equation}
   \label{eq:PhiR}
\Phi_L(z, h, f) = 2 \log |C_{L}(z, h, f)| - \frac{2}{M} \log \| R_{L}(z, h, f)\|_{\mathrm{Pet}}.    
\end{equation}

Now we let $L = L_i$ for $0 \le i \le \ell$ as in Proposition \ref{prop:pairing}.
Since the spinor norm $\nu: H_{\tU} \to T$ is surjective, the subgroup $H_{\tU}(\hat\Qb) \subset H_V(\hat\Qb)$ acts transitively on the connected components of $X_K$ by \eqref{eq:pi0}.
Therefore for any open compact $K_{\tU} \subset K \cap H_{\tU}(\hat\Qb)$, the quantity
\begin{equation}
  \label{eq:Cfind}
 \Cc_{i}(K_{\tU}, f) := \prod_{\th \in H_{\tU}(\Qb)_+ \backslash H_{\tU}(\hat\Qb)/K_{\tU}} C_{L_i}(z, (h_W, \th), f)
\end{equation}
is independent of $h_W \in H_W(\hat\Qb)$ and $z \in \Db$. 

Now, we are ready to state and prove the main result of this section.
\begin{thm}
\label{thm:algebraicity}
Let $N, h, P_1, P_2, \ell$ and $\hT(\tau; N, h, P_1, P_2, \ell) \in \Ac_{\vv{\rf \ell + 1}, \rho_L}$ be the same as in Theorem \ref{thm:L1pre} and $\tP, L_i, V, \kappa$ the same as in Proposition \ref{prop:pairing}.
%
%
%
For any weakly holomorphic Whittaker form $f = f_\Pc \in M^!_{\kappa, \bar\rho_N}$ with coefficients in a number field $\Fc \subset \Cb$, there exists $\lambda_1, \dots, \lambda_m \in \Fc$ and $\alpha_1, \dots, \alpha_m \in E_W^{\mathrm{ab}}$ independent of $h_W$ such that 
\begin{equation}
  \label{eq:algpair}
  \{ E_{\tP}(\tau) \hT^+(\tau), \Pc   \} = \sum_{i = 1}^m \lambda_i \log |\alpha_i/\sigma(\alpha_i)|,
\end{equation}
where
 $\sigma \in \Gal(E_W^{\mathrm{ab}}/E_W)$ is the  element associated to $h_W \in H_W(\hat\Qb) \cong \hat E_{W}^\times$ 
via class field theory.
\end{thm}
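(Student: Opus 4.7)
The plan is to combine Proposition \ref{prop:pairing} with Bruinier's regularized theta lift from \cite{Br12} and the theory of complex multiplication; the hypothesis on $\nu(h_W)$ enters precisely to kill the locally constant ambiguity of the lift.

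Since $f = f_\Pc$ is weakly holomorphic one has $\xi(f) = 0$, so Proposition \ref{prop:pairing} reduces the left-hand side of \eqref{eq:algpair} to $\Phi_L(z_0, 1, f) - \Phi_L(z_0, h_W, f)$. Using \eqref{eq:Qbasis} I would first write $f = \sum_i \lambda_i f_i$ with $\lambda_i \in \Fc$ and $f_i \in M^!_{\tilde\kappa, \bar\rho_L}(\Qb)$, and then clear denominators so that each $f_i$ has integral principal part. It will then suffice to produce, for each $i$, an $\alpha_i \in H^\times$ such that $\Phi_L(z_0, 1, f_i) - \Phi_L(z_0, h_W, f_i)$ is a rational multiple of $\log|\alpha_i/\sigma(\alpha_i)|$.

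For each such $f_i$, equations \eqref{eq:Psif}--\eqref{eq:PhiR} produce a meromorphic modular form $R_i(z, h)$ defined over $\sigma_1(F)$, a positive integer $M_i$, and a locally constant function $C_{f_i}: X_K \to \Cb^\times$ satisfying $\Phi_L(z, h, f_i) = 2 \log |C_{f_i}(h)| - (2/M_i) \log \|R_i(z, h)\|_{\mathrm{Pet}}$. Subtracting the values at $h = 1$ and $h = h_W$ cancels the Petersson weight factor because the base point is the same. Since $(z_0, 1)$ is a CM point defined over $H$ and $(z_0, h_W)$ is its image under the Galois element $\sigma \in \Gal(E_W^{\mathrm{ab}}/E_W)$ attached to $h_W$ via class field theory, Shimura reciprocity for CM tori yields $R_i(z_0, h_W) = \sigma(R_i(z_0, 1))$; setting $\alpha_i := R_i(z_0, 1) \in H^\times$ then converts the $R$-contribution into $-(2/M_i) \log |\alpha_i/\sigma(\alpha_i)|$. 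If $(z_0, 1)$ happens to meet $\supp Z(f_i)$, Lemma \ref{lemma:Phidiff} together with Remark \ref{rmk:singularity} still make the difference real-analytic at $z_0$, and the matching of the local singularities of $R_i$ at the two CM points ensures that the ratio $R_i(z_0, 1)/R_i(z_0, h_W)$ remains a well-defined element of $H^\times$.

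The hard part will be to show that $|C_{f_i}(1)/C_{f_i}(h_W)| = 1$ under the hypothesis that $\nu(h_W)$ is trivial in $\Cl^+_1(F)/\Cl^+_1(F)^2$. By Lemma \ref{lemma:Cf}, $|C_{f_i}|$ is invariant under the central translation by $Z(\hat\Qb)$, so through the spinor norm and the description \eqref{eq:pi0} of $\pi_0(X_K)$ it depends only on the class of $\nu(h)$ in the quotient $F^\times_+ \backslash \hat F^\times / ((\hat F^\times)^2 \nu(K))$. The final step is to combine this with the antiholomorphic symmetry $\Phi_L(z_0, h, f_i) = \Phi_L(\bar z_0, \beta h, f_i)$ from \eqref{eq:Phic} and the identity \eqref{eq:zbarPet} (valid because $R_i$ is defined over $\Rb$): the passage from $\Db^+$ to $\Db^-$ is realized on the component structure of $X_K$ by translation by a rational element whose spinor norm is negative at $\sigma_1$ and positive at every other embedding, and this absorbs exactly the sign ambiguity at $\sigma_1$, reducing the relevant invariant to $\Cl^+_1(F)/\Cl^+_1(F)^2$. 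The hypothesis on $\nu(h_W)$ then forces the constant ratio to equal $1$, and assembling the contributions of the previous step over $i$ delivers \eqref{eq:algpair}.
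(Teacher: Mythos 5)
Your overall architecture is aligned with the paper: reduce to $\Qb$-rational Whittaker forms via \eqref{eq:Qbasis}, multiply by an integer so the Borcherds form in \eqref{eq:Psif} has trivial character, use Proposition~\ref{prop:pairing} with $\xi(f) = 0$ to reduce to $\Phi_L(z_0,1,f)-\Phi_L(z_0,h_W,f)$, and kill the locally constant ambiguity $C_f$ via Lemma~\ref{lemma:Cf} together with the class group hypothesis. That part is in essence the paper's argument.

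The genuine gap is the Galois step. You assert that ``Shimura reciprocity for CM tori yields $R_i(z_0,h_W)=\sigma(R_i(z_0,1))$'' and then set $\alpha_i := R_i(z_0,1) \in H^\times$. But $R_i$ is a meromorphic \emph{modular form} of nonzero weight $-B(M_if_i)$, not a modular function. The reciprocity law for canonical models identifies $\sigma$ with the action of the CM torus on modular \emph{functions} evaluated at CM points; for forms of nonzero weight the value $R_i(z_0,1)$ is not an element of $H^\times$ (it is only $\overline{\Qb}$-proportional up to a CM period), and the Galois equivariance $R_i(z_0,h_W)=\sigma(R_i(z_0,1))$ has no meaning without further normalization. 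The paper's proof introduces precisely the object you are missing: it forms the weight-zero quotient
\[
Q(z,h) := \prod_{h_0 \in S} \frac{R(z,h)}{R(z,hh_0)},
\]
with $S$ a set of representatives for $H_W(\Qb)\backslash H_W(\hat\Qb)/K(N)$, observes that $Q$ is a meromorphic \emph{function} on $X_{K'}$ defined over $\sigma_1(F)$ (so its CM values are honest elements of $H$), shows $|Q(z_0,1)/Q(z_0,h_W)|^{1/|S|} = \|R(z_0,1)/R(z_0,h_W)\|_{\mathrm{Pet}}$ by re-indexing the product, and only then applies the reciprocity law $\sigma(Q(z_0,1))=Q(z_0,h_W)$ to get $\alpha = Q(z_0,1)\in H$. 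This averaging over the CM orbit of $h$ is also what guarantees that $Q$ is holomorphic and non-vanishing at every $(z_0,h')$ with $h'\in H_W(\hat\Qb)$, which takes care of the singularity issue you only gesture at. Without passing to a weight-zero object your $\alpha_i$ need not lie in $H$, and the identity $R_i(z_0,h_W)=\sigma(R_i(z_0,1))$ is not a theorem you may invoke.

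The $C_f$ discussion is on the right track but vaguer than what the paper does: the paper first uses Lemma~\ref{lemma:K} to choose $K$ with $\nu(K)=\hat\Oc^\times$ so that \eqref{eq:pi0} identifies $\pi_0(X_K)$ with $\Cl^+(F)$, and then the triviality of $\nu(h_W)$ in $\Cl^+_1(F)/\Cl^+_1(F)^2$ is used to produce an explicit factorization $h_W=\beta h k$ with $\beta\in Z(\hat\Qb)$, $h\in H(\Qb)$, $k\in K$, from which $|C_f(1)|=|C_f(h_W)|$ follows directly. Your invocation of the $\Db^+\leftrightarrow\Db^-$ symmetry and \eqref{eq:zbarPet} is consistent with the role of $\Cl^+_1$ rather than $\Cl^+$, but you should make the factorization explicit to turn this into a proof.
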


\begin{rmk}
  \label{rmk:m}
  We can choose $\lambda_i, \alpha_i$ above such that the index $m$ is bounded by the degree of $\Fc/\Qb$.
  Due to the order the multiplier system of the meromorphic modular form $\Psi(z, h, f)$ in \eqref{eq:Psif}, 
    the denominator of $\lambda_i$ depends on $f$ even when it has integral coefficients 
  \end{rmk}

\begin{proof}
  By \eqref{eq:Qbasis}, we can write $f = \sum_{i=1}^m \lambda_i f_i$ with $f_i\in M^!_{\kappa, \bar\rho_N}(\Qb)$ and suppose that $\Fc = \Qb$.
  By replacing $f$ with $M\cdot f$ for some $M \in \Nb$, we can suppose that $f$ has integral Fourier coefficients and the modular form $\Psi_{L_i}(z, h, f)$ in \eqref{eq:Psif} has trivial character for all $0 \le i \le \ell$.
  Let $R_L(z, h, f)$ and  $\Cc_L(z, h, f)$ be the same as in  \eqref{eq:PhiR}.

  Let $K \subset H_V(\hat \Qb)$ be an open compact fixing $\hat L_i$, acting trivially on $\hat L'_i/\hat L_i = \hat N' /\hat N$, and $K \cap H_W(\hat\Qb)$ contains $K_N$.
  Denote $K_{\tU} := K \cap H_{\tU}(\hat\Qb)$ an open compact in $H_{\tU}(\Qb)$ and $C_i(K_{\tU}, f)$ the same as in   \eqref{eq:Cfind}.
Fix $S_W \subset  H_W(\hat \Qb)$
coset representatives of  $H_W(\Qb)\backslash H_W(\hat \Qb)/K_N $.

  Since $f$ is weakly holomorphic, the cusp form $\xi(f)$ vanishes identically.
Combining the Siegel-Weil formula in \eqref{eq:SWdef}, Proposition\ \ref{prop:pairing}, Remark \ref{rmk:Ph} and Equations \eqref{eq:PhiR}, \eqref{eq:Cfind}, we can then write
\begin{align*}
&  \{ E_{\tP}(\tau) \hT^+(\tau), \Pc   \}
     =
    c_{K_{\tU}}^{-1}  \sum_{\th \in   H_{\tU}(\Qb)\backslash H_{\tU}(\hat\Qb) /K_{\tU}}
  \{ \theta_{\tP}(\tau, \th) \hT^+(\tau), \Pc   \}    \\
  &=
    c_{K_{\tU}}^{-1}  \sum_{\substack{\th \in   H_{\tU}(\Qb)\backslash H_{\tU}(\hat\Qb) /K_{\tU}\\ 0 \le i \le \ell}} (-1)^{\ell - i} \binom{\ell}{i}
\lp    \Phi_{L_{i}}(z_0, (1, \th), f) -     \Phi_{L_{i}}(z_0, (h_W, \th), f) \rp\\
&    = 
2     c_{K_{\tU}}^{-1}  
\sum_{0 \le i \le \ell} (-1)^{\ell - i} \binom{\ell}{i}
\lp \log \left|\frac{\Cc_i(K_{\tU}, f)}{ \Cc_i(K_{\tU}, f)}\right| -
\sum_{\th \in   H_{\tU}(\Qb)\backslash H_{\tU}(\hat\Qb) /K_{\tU}}
  \log \left\|\frac{ R_{L_i}(z_0, (1, \th),f)}{R_{L_i}(z_0, (h_W, \th), f)}\right\|_{\mathrm{Pet}}\rp\\
& =  -\frac{2}{    c_{K_{\tU}} \cdot \# S_W}  
\sum_{\th \in   H_{\tU}(\Qb)\backslash H_{\tU}(\hat\Qb) /K_{\tU}}
  \log |Q_{}(z_0, (1, \th))/Q_{}(z_0, (h_W, \th))|,\end{align*}
where $Q_{}(z, h)$  is a meromorphic function on $X_{K'}$ defined over $\sigma_1(F)$ given by
$$
Q_{}(z, h) := 
\prod_{0 \le i \le \ell}
\prod_{h_0 \in S_W}
\lp \frac{  R_{L_i}(z, h, f)}{R_{L_i}(z, h (h_0, 1), f)}\rp^{(-1)^i \binom{\ell}{i}}
  $$
  with $K' :=  K \cap_{h_0 \in S_W} h_0 K h_0^{-1}$.
  Since $H_W$ is abelian, we have  $K_N\times K_{\tU} \subset K'$.
  Therefore, the CM 0-cycle $Z(W)$ also lies on $X_{K'}$ and each CM point is defined over a number field $E_{K_N} \subset E_W^{\mathrm{ab}}$.
  The function $R_{L_i}$ is defined over $\sigma_1(F)$, as well as the natural map $X_{K'} \to X_K$ given by right multiplication with $h_0 \in H_W(\hat\Qb) \subset H_V(\hat\Qb)$ (see \cite[page 46]{Kudla97}).
  Therefore, the modular function $Q_{}(z, h)$ is also defined over $\sigma_1(F)$.
  Furthermore, it is  non-zero at $(z, h) = (z_0, (h_W, 1))$ for all $h_W \in H_W(\hat \Qb)$. These values are algebraic numbers satisfying
  \begin{align*}
    \sigma(Q_{}(z_0, (1, \th)))
      =     Q_{}(z_0, (h_W, \th))
  \end{align*}
  as $\sigma$ fixes the function $Q(z,h)$, which is defined over $\sigma_1(F)$, and acts on CM points in the CM 0-cycle $Z(W, \th)$ by Shimura's reciprocity law (see sections 3.1 and 5.3 in \cite{AGHMP18} for the relevant case here).
  Setting $\alpha = \prod_{\th \in   H_{\tU}(\Qb)\backslash H_{\tU}(\hat\Qb) /K_{\tU}}  Q_{}(z_0, (1, \th))$ and $\lambda =   -\frac{2}{    c_{K_{\tU}} \cdot \# S_W}   $  finishes the proof.
\end{proof}

\section{Proofs of Theorems}
\label{sec:pfs}
Now we are ready to prove the three theorems from the introduction.

  \subsection{Proof of Theorem \ref{thm:diff}}
  Let $N \subset W$ be a lattice such that $K \cap H_W(\hat \Zb)$ fixes $\hat N$ and acts trivially on $\hat N'/\hat N$. 
  By passing to sublattice, we can suppose that $L = \tilde L \oplus N_0$ with $N_0 = \Res_{F/\Qb}N$ and $\tilde L$ positive definite of rank $2 + n - 2d$. Then for any $(z_0, h) \in Z(W_\Qb)$, we have
$
\Theta_L(\tau, z_0, h) = \theta_{\tilde L}(\tau) \otimes \theta_N^\Delta(\tau, h)$.
By replacing $N$ with $h_1 \hat N \cap V$ if necessary, we can suppose that $h_1 = 1$ and write $h = h_2$.
We can apply Lemma \ref{lemma:RRC} to $g = \theta_{\tilde L}, k = 1 - n/2 - 2r, \ell = 1 + n/2 - d$ as $k + \ell = -d + 2 - 2r \le 1 - 2r$. 
This implies
\begin{align*}
  \langle \tR^r_\tau f(\tau),      \overline{\Theta_L(\tau, z_0, h)} \rangle
&=
\left\langle\langle \tR^r_\tau f(\tau),  \overline{\theta_{\tilde L}(\tau)}
\rangle_{N_0}, \overline{\theta^\Delta_N(\tau, h)} \right\rangle
  = \sum_{j = 0}^r c_{r, r, j}  \langle \tR^{r-j}_{\tau} f_j(\tau), \overline{\theta_N^\Delta(\tau, h)} \rangle,
\end{align*}
 where $c_{r, r, j} \in \Qb$ and 
$$
f_j(\tau) :=
\Cc_{(k, \ell), j} \langle f(\tau_1), \overline{\theta_{\tilde L}(\tau_2)}\rangle_{N_0} \in M^!_{2-2r-d+2j, \overline\rho_{N_0}}
$$
has rational Fourier coefficients.
From the integral representation \eqref{eq:Phijint}, we see that it suffices to prove the theorem with $f = f_0$ and $L = N_0$, in which case
$$
\langle f(\tau), \overline{\tR^r_\tau  \Theta_L(\tau, z_0, h)} \rangle
= \langle f_0(\tau),    \overline{\tR^r_\tau \theta_N^\Delta(\tau, h)} \rangle .
$$

Now for $1 \le i \le m$, let $\tP_i, P_{1, i}, P_{2, i} \in \Uc^+_F$ be $\Oc$-lattices of ranks $2\trf_i, 2\rf_i$ as in Proposition \ref{prop:POU}.
For any $\ell \in \Nb$, it gives us $g_i \in M^!_{-(\trf_i  +  \rf_i \ell_i)(r+1)d}$ such that
$$
\langle f_0(\tau),    \overline{\tR^r_\tau \theta_N^\Delta(\tau, h)} \rangle
=
\langle f_0(\tau),    \overline{\tR^r_\tau \theta_N^\Delta(\tau, h)} \rangle
\sum_{i = 1}^m g_i(\tau) (E_{\tP_i}^\Delta(\tau)(\theta_{P_{1, i}}^\Delta(\tau) - \theta_{P_{2, i}}^\Delta(\tau))^{\ell_i})^{r+1}  ,
$$
and it suffices to prove the theorem for each $i$.  
We drop the index $i$ and have
\begin{equation}
  \label{eq:diff0}
  \begin{split}
      \Phi^r_L&(z_0, 1, f) -     \Phi^r_L(z_0, h, f) \\
&=
    \lim_{T \to \infty}  \int_{\Fc_T}^{} \langle f_0(\tau) g(\tau),
  \overline{
  (G^\Delta(\tau))^{r+1}
  \tR^r_\tau \lp  \theta_N^\Delta(\tau) - \theta_N^\Delta(\tau, h) \rp} \rangle d \mu(\tau).
  \end{split}
\end{equation}
where we set $ G := E_{\tP} \cdot (\theta_{P_1} - \theta_{P_2})^\ell$.
Fix an $\ell \ge \ell_0$ as in Theorem \ref{thm:L1pre} and  we obtain a real-analytic Hilbert modular form $\hat\delta(\tau) = \hat\delta(\tau; N, h, P_1, P_2,{\ell}) \in \Ac_{\vv{1 + \rf \ell}, \rho_N}$ with the property
\begin{equation}
  \label{eq:L1heta}
  L_{\tau_1} \hat\eta(\tau) = G(\tau)(\theta_N(\tau) - \theta_N(\tau, h)),~
  \hat\eta := E_{\tP} \cdot \hat\delta \in \Ac_{\hka, \rho_N},  
\end{equation}
where $\kappa := (1-\trf -\rf\ell, \trf + \rf\ell + 1, \dots, \trf + \rf\ell+1)$ and $\hka$ is defined as in \eqref{eq:hatkappa}.
We can now use the differential operator $\Dc_{\vv{1}, r}$ from \eqref{eq:Dc} to define
 a real-analytic, elliptic modular form
\begin{equation}
  \label{eq:deltar}
  \hat\eta_r := \Dc_{\vv{1}, r} ( \hat\eta, G)
  \in \Ac_{k, \rho_{N_0}},~
  k:=    d ((1 + r)(\trf + \rf \ell) + 1) + 2r.
\end{equation}
By Lemma \ref{lemma:lower}, this function satisfies
\begin{align*}
  L_\tau \hat \eta_r
  &=
    G^\Delta(\tau)^{r+1} \tR^r (L_{\tau_1}\hat\eta/G)^\Delta
= G^\Delta(\tau)^{r+1}  \tR^r (\theta^\Delta_N(\tau) - \theta^\Delta_N(\tau, h)).
\end{align*}
We can now apply Stokes' theorem to the right hand side of \eqref{eq:diff0} and
part (5) of Theorem \ref{thm:L1pre} to obtain
\begin{align*}
  \Phi^r_L(z_0, 1, f) -     \Phi^r_L(z_0, h, f)
  &=
-    \lim_{T \to \infty}  \int_{0}^{1} \langle f_0(u+iT) g(u + iT), \overline{\hat\eta_r(u + iT) }\rangle du
= - \{ f_0 g, \hat\eta^+_r\},
\end{align*}
where $\hat\eta^+_r :=  \Dc_{\vv{1}, r} (\hat\eta^+, G)$ is a formal Fourier series with coefficients being $F$-linear combinations of those of $\hat\eta^+$.
Note there exists a harmonic Whittaker form $g_r \in H_{\kappa, \bar\rho_N}$ with
 Fourier coefficients in $F$ such that
$$
\{f_0g,
 \Dc_{\vv{1}, r} (\delta, G )\} = \{g_r, \delta\}.
$$
for any Fourier series $\delta(\tau) = \sum_{\mu \in S_N,~ m \in F} c(m, \mu) \ebf(\tr(\mu\tau)) \phi_\mu $ that is $\Gamma_\infty$-invariant with respect to $\rho_N$.
If $\delta \in S_{\hka, \rho_N}$, then $\Dc_{\vv{1}, r}(\delta, G)$ is in $S_{k, \rho_{N_0}}$ and this pairing vanishes as $f_0 g \in M^!_{2 -k, \overline\rho_{N_0}}$. Therefore $g_r$ is weakly holomorphic and  \eqref{eq:maindiff} follows from
$$
  \Phi^r_L(z_0, 1, f) -     \Phi^r_L(z_0, h, f)
  = - \{ f_0 g, \hat\eta^+_r\} = 
  - \{  g_r, \hat\eta^+\}
  =    - \{  g_r, E_{\tP}\hat\delta^+\} 
$$
and Theorem \ref{thm:algebraicity}.

When $d = 2 = n/2$, we already have $L = N_0$ and the reduction step in the first paragraph above is not necessary.
The function $\hat\eta_r^+$ is simply $(G^\Delta)^{r+1} \Cc_{(1, 1), r}(\hat\eta^+/G)$, and the last claim follows from Example \ref{exmp:wt11}.

\subsection{Proof of  Theorem \ref{thm:conj}}
As $Z(W_\Qb)$ is defined over $\Qb$, i.e.\ $Z(W_\Qb) = Z(W(2)_\Qb)$ on $X_{V, K}$, we can apply Theorem 5.10 in \cite{BEY21} to conclude that
$$
\frac{2}{\mathrm{deg}(Z(W_\Qb))} \Phi_L^r(Z(W_\Qb), f)
= \mathrm{CT}(\langle f, \Cc_{(1, 1), r}(\Ec_L^+)\rangle),
 $$
 where $\Ec_L^+$ is the holomorphic part of the derivative of an incoherent Eisenstein series, which is a real-analytic Hilbert modular form of weight $(1, 1)$.
 From Example \ref{exmp:wt11}, we see that the constant term of $\Cc_{(1, 1), r}(\Ec_L^+)$ vanishes when $r \ge 1$.
 Furthermore, since $f$ has rational Fourier coefficients, the term $\mathrm{CT}(\langle f, \Cc_{(1, 1), r}(\Ec_L^+)\rangle)$ is a rational linear combinations of the non-zero Fourier coefficients of $\Ec_L^+$, which are rational multiple of logarithms of integers by Proposition 4.6 in \cite{BKY12}.
 Therefore, we have
 \begin{equation}
   \label{eq:alg-all}
\sum_{(z_0, h) \in Z(W_\Qb)} \Phi_L^r(z_0, h, f) = \frac{1}{c'} \log |a|   
 \end{equation}
for some $c' \in \Nb$ and $a \in \Qb$. 

We can now apply Theorem \ref{thm:diff} to find  $c \in \Nb$ and $\alpha \in E' \subset E_W^{\mathrm{ab}}$ such that
$$
\Phi^r_L(z_0, 1, f) -     \Phi^r_L(z_0, h, f) = \frac{1}{c' c} \log \left| \frac{\alpha}{\sigma(\alpha)} \right|
$$
for all $(z_0, h) \in Z(W_\Qb)$. 
 Denote $N := |Z(W_\Qb)|/2 \in \Nb$ and $\beta := \prod_{\sigma \in \Gal(E'/E_W)} \sigma(\alpha) \in E_W$.
The system of equations above has the unique solution
$$
\Phi^r_L(z_0, h, f) = \frac{1}{2N c' c} \log | \sigma(\tilde \alpha)|
$$
with $\tilde{\alpha} = \alpha^{2N} a^c / \beta^2 \in E'$.
This finishes the proof.
\subsection{Proof of  Theorem   \ref{thm:GZ}}
When $r$ is even, this follows from the discussion in Example \ref{exmp:biquad} and Theorem \ref{thm:conj}. 
When $r$ is odd and $d_1$ is fundamental, Theorem 7.13 of \cite{BEY21} gives the algebraicity analogous to \eqref{eq:alg-all} with the left hand side replaced by certain partial average.
Using this and proceeding with the rest of the argument in the proof of Theorem \ref{thm:conj} gives Theorem \ref{thm:GZ}. 

\bibliography{CMHG}{}
\bibliographystyle{amsplain}

\end{document}